\documentclass[11pt]{article}
\usepackage{amssymb,amsmath,accents}
\usepackage{amscd}
\usepackage{amsfonts,amsthm,mathrsfs}
\usepackage{setspace}
\usepackage{cases,empheq}
\usepackage{subcaption}
\usepackage{diagbox}
\usepackage[hyphens,allowmove]{url}
\usepackage{hyperref}
\usepackage{cleveref}
\usepackage{float}

\numberwithin{equation}{section}
\newtheorem{thm}{Theorem}
\newtheorem{cor}[thm]{Corollary}
\newtheorem{lem}[thm]{Lemma}
\newtheorem{prop}[thm]{Proposition}

\newtheorem{rem}[thm]{Remark}

\begin{document}

\title
{On dynamic stability of energetically stable equilibria \\ of the Navier-Stokes-Korteweg flows \vspace{0.1em}\\ %
\vspace{0.4em}
\normalsize{In the memory of Professor Hermann Sohr 
-- our distinguished colleague}}

\author{Yoshikazu Giga$^1$, Naoto Kajiwara$^2$ and Kazuyuki Tsuda$^3$ \vspace{0.7em}\\ 
\centerline{{\small $^1$Graduate School of Mathematical Sciences}} \\
\centerline{{\small The University of Tokyo}} \\
\centerline{{\small 3-8-1 Komaba, Meguro-ku, Tokyo 153-8914, Japan}} \vspace{0.7em}\\
\centerline{{\small $^2$Applied Physics Course, Department of Electrical, Electronic and Computer Engineering}} \\
\centerline{{\small Gifu University}} \\
\centerline{{\small 1-1 Yanagido, Gifu City, Gifu 501-1193, Japan}} \vspace{0.7em}\\
\centerline{{\small $^3$Center for Fundamental Education}} \\
\centerline{{\small Kyushu Sangyo University}} \\
\centerline{{\small 2-3-1 Matsukadai, Higashi-ku, Fukuoka 813-8503, Japan}}}

\date{}

\maketitle
\thispagestyle{empty}
%
\footnote[0]{{\it Keywords:}
 Navier-Stokes-Korteweg equations, stability, equilibria, non-degeneracy}

\begin{abstract}
We consider the Navier-Stokes-Korteweg equations in a bounded domain or a periodic cell.
 The pressure considered in this paper may not be monotone with respect to the density so that there exist non-constant equilibria allowing two-phases.
 Using a simple Hilbert space framework, we prove that if an isolated equilibrium is energetically stable and non-degenerate, it is exponentially stable under the isothermal Navier-Stokes-Korteweg flows when the space dimension is less than or equal to three.
 For non-isolated case, we prove that a global-in-time solution near an energetically stable equilibrium converges to possibly another equilibrium exponentially fast.
 No smallness assumptions on equilibria are imposed.
 For the proof we apply a (generalized) stability principle due to J.~Pr\"{u}ss, M.~Wilke and G.~Simonett (2013).
\end{abstract}

\maketitle

\section{Introduction} \label{SI}

In this paper, we consider the (isothermal) Navier-Stokes-Korteweg equations in $\Omega$ which is a smooth bounded domain in $\mathbb{R}^n$ or a periodic cell $\mathbb{T}^n=\prod_{i=1}^n(\mathbb{R}/\omega_i\mathbb{Z})$ with $\omega_i>0$ ($i=1,\ldots,n$).
 It is of the form
\begin{gather}
	\rho_t + \operatorname{div}(\rho u) = 0
	\quad\text{in}\quad \Omega\times(0,T) \label{ECM} \\
	\rho \left(u_t + (u\cdot\nabla) u\right)
	- \mu\Delta u
	- (\lambda + \mu) \nabla \operatorname{div} u + \nabla p(\rho)
	= \rho \nabla (\varepsilon\Delta\rho)
	\quad\text{in}\quad \Omega\times(0,T). \label{ECMM}
\end{gather}
Here $\rho$ and $u=(u^1,\ldots,u^n)$ denotes the unknown density and velocity, respectively.
 The constants $\mu$ and $\lambda$ are shear and bulk viscosity, respectively.
 When we say that \eqref{ECM}--\eqref{ECMM} are the Navier-Stokes-Korteweg equations, we always assume that $\mu>0$ and $\lambda+2\mu>0$.
 (If the non-isothermal system satisfies the second law of the thermo-dynamics, $\mu$ and $\lambda$ must satisfy $\mu\ge0$ and $n\lambda+2\mu\ge0$.)
 We set the Lam\'e operator by
\begin{equation} \label{EL}
	Lu := -\mu\Delta u - (\lambda + \mu) \nabla \operatorname{div} u
\end{equation}
for later convenience.
 The condition $\mu>0$ and $2\mu+\lambda>0$ corresponds to the strong ellipticity of $L$.
 The Korteweg relaxation parameter is denoted by $\varepsilon$ and it is assumed to be a positive constant.
 The pressure  $p$ is written by using a given available (Helmholtz) energy $\Psi\in C^2(\mathbb{R})$ as
\[
	p(\rho) = \rho \Psi_\rho(\rho) - \Psi(\rho), \quad
	\Psi_\rho(\rho) = \partial\Psi/\partial\rho
\]
so that
\begin{equation} \label{EEP}
	\nabla p(\rho) = \rho \nabla \left(\Psi_\rho(\rho) \right).
\end{equation}
We are interested in the case that $\Psi$ may not be convex or equivalently $p$ is not monotone increasing.
 A typical example of $\Psi$ is
\begin{equation} \label{ESp}
	\Psi(\rho) = W(\rho) + c_1\rho
\end{equation}
with a double-well potential $W$ with equal depth and a positive constant $c_1$.
 Physically speaking, the small minimum point of $W$ corresponds to the saturated density $\rho_g$ of gas while the large minimum point of $W$ corresponds to the saturated density $\rho_\ell$ of liquid; see Figure~\ref{FE}.
\begin{figure}[h]
    \centering
    \includegraphics[width=0.35\linewidth]{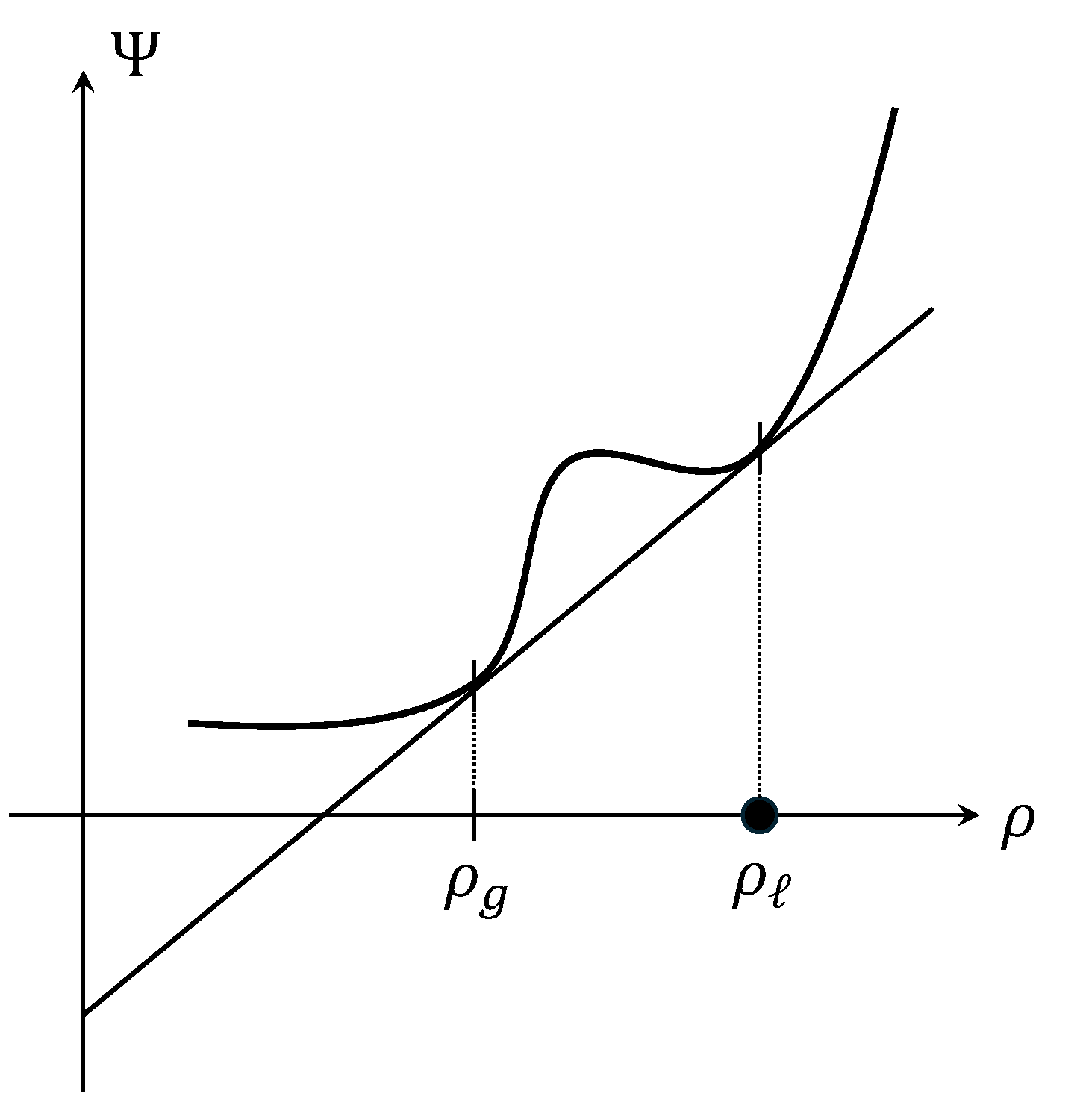}
    \caption{A typical $\Psi$}
    \label{FE}
\end{figure}
Using \eqref{EEP}, the equation \eqref{ECMM} becomes
\begin{equation} \label{ECMM2}
	u_t + (u\cdot\nabla)u + \rho^{-1} Lu
	+ \nabla \left(\Psi_\rho(\rho) - \varepsilon\Delta\rho \right) = 0
\end{equation}
provided that $\rho>0$, where $L$ is defined by \eqref{EL}.
 We consider \eqref{ECM} and \eqref{ECMM2} supplemented by the boundary condition
\begin{equation} \label{EB}
	\frac{\partial\rho}{\partial\nu} = 0, \quad
	u = 0 \quad\text{on}\quad \partial\Omega \times (0,T),
\end{equation}
where $\nu$ denotes the unit exterior normal of $\partial\Omega$.
 In the case $\Omega=\mathbb{T}^n$, we interpret that a periodic boundary condition for a cell $\prod_{i=1}^n[0,\omega_i)$ is imposed.

We are interested in the stability of equilibria of \eqref{ECM}, \eqref{ECMM2} (with \eqref{EB}).
 We say that $(\rho,u)$ is an \emph{equilibrium} (or \emph{stationary solution}) if $\rho$ and $u$ are time-independent.
 It turns out that the velocity $u$ must be a constant if $(\rho,u)$ is an equilibrium provided that we assume $\mu>0$ and $\lambda+2\mu>0$.
 If $\partial\Omega\neq\emptyset$, then by \eqref{EB} $u$ must be zero.
 For a moment we shall discuss the case $\partial\Omega\neq\emptyset$.
 If $u=0$, then $(\rho,0)$ is an equilibrium if and only if $\rho$ solves
\[
    \nabla \left(\Psi_\rho(\rho) - \varepsilon\Delta\rho \right) = 0
    \quad\text{in}\quad \Omega, \quad
    \frac{\partial\rho}{\partial\nu} = 0
    \quad\text{on}\quad \partial\Omega.
\]
We denote such $\rho$ by $\rho^e$.
 Let $\mathcal{E}$ denote the set of all $\rho^e$'s.
 We say that $\rho^e$ (or $(\rho^e,0)$) is \emph{energetically stable} if
\begin{equation} \label{ESt}
	\int_\Omega \left\{ \varepsilon|\nabla y|^2
	+ \Psi_{\rho\rho} (\rho^e) y^2 \right\}\, dx \ge 0
\end{equation}
for all $y\in H^1(\Omega)$ satisfying $\int_\Omega y\,dx=0$.
 As we shall see in Section \ref{SEq}, there always exists an energetically stable equilibrium $\rho^e$ for fixed average, i.e.
\begin{equation} \label{Eav}
	\frac{1}{|\Omega|} \int_\Omega \rho^e\,dx = \rho_\mathrm{av}
\end{equation}
for a given constant $\rho_\mathrm{av}$.
 If $-\Psi_{\rho\rho}(\rho_\mathrm{av})/\varepsilon$ is sufficiently large, an energetically stable equilibrium $\rho^e$ satisfying the average condition \eqref{Eav} should not be a constant function.
 Indeed, a constant satisfying \eqref{Eav} must be $\rho_\mathrm{av}$ itself.
 Let $\alpha$ be the minimal positive eigenvalue of the Neumann Laplacian.
 In other words,
\[
	\alpha = \inf \left\{ \frac{\int_\Omega |\nabla y|^2\,dx}{\int_\Omega |y|^2\,dx} \biggm|
	y \in H^1(\Omega),\ \int_\Omega y\,dx = 0 \right\}.
\]
Since $\alpha>0$, the stability condition \eqref{ESt} for $\rho^e\equiv\rho_\mathrm{av}$ fails if and only if $-\Psi_{\rho\rho}(\rho_\mathrm{av})>\varepsilon\alpha$.
In other words, $-\Psi_{\rho\rho}(\rho_\mathrm{av})\le\varepsilon\alpha$ is a necessary and sufficient condition so that an energetically stable equilibrium satisfying \eqref{Eav} is a constant function.

For $\rho^e\in\mathcal{E}$ let $B(\rho^e)$ be the linearized operator defined by
\[
	B(\rho^e) = P_1 \left(-\varepsilon\Delta + \Psi_{\rho\rho}(\rho^e) \right)
	\quad\text{with}\quad
	P_1 g = g - \frac1{|\Omega|} \int_\Omega g\,dx
\]
in $H_\mathrm{av}^1(\Omega)$ where $H_\mathrm{av}^1(\Omega)$ is the average-free $H^1$ space.
 We say that $\rho^e$ is \emph{non-degenerate} if $\mathcal{E}$ is an $m$-dimensional manifold near $\rho^e$ and the dimension of the kernel $\ker B(\rho^e)$ equals $m$.
 If $m=0$ so that $\rho^e$ is isolated, then $\rho^e$ is non-degenerate if and only if $B(\rho^e)$ is injective.
 In the case of $\Psi$ in Figure~\ref{FE}, $\Psi_{\rho\rho}(\rho_\mathrm{av})<0$ for some $\rho_\mathrm{av}\in(\rho_g,\rho_\ell)$.
 Thus for sufficiently small $\varepsilon>0$, the constant function $\rho\equiv\rho_\mathrm{av}$ is not energetically stable.
 When $\Omega$ is a bounded interval, it turns out that there exists a monotone energetically stable equilibrium for small $\varepsilon>0$; see Figure~\ref{FM}.
\begin{figure}[h]
    \centering
    \includegraphics[width=0.45\linewidth]{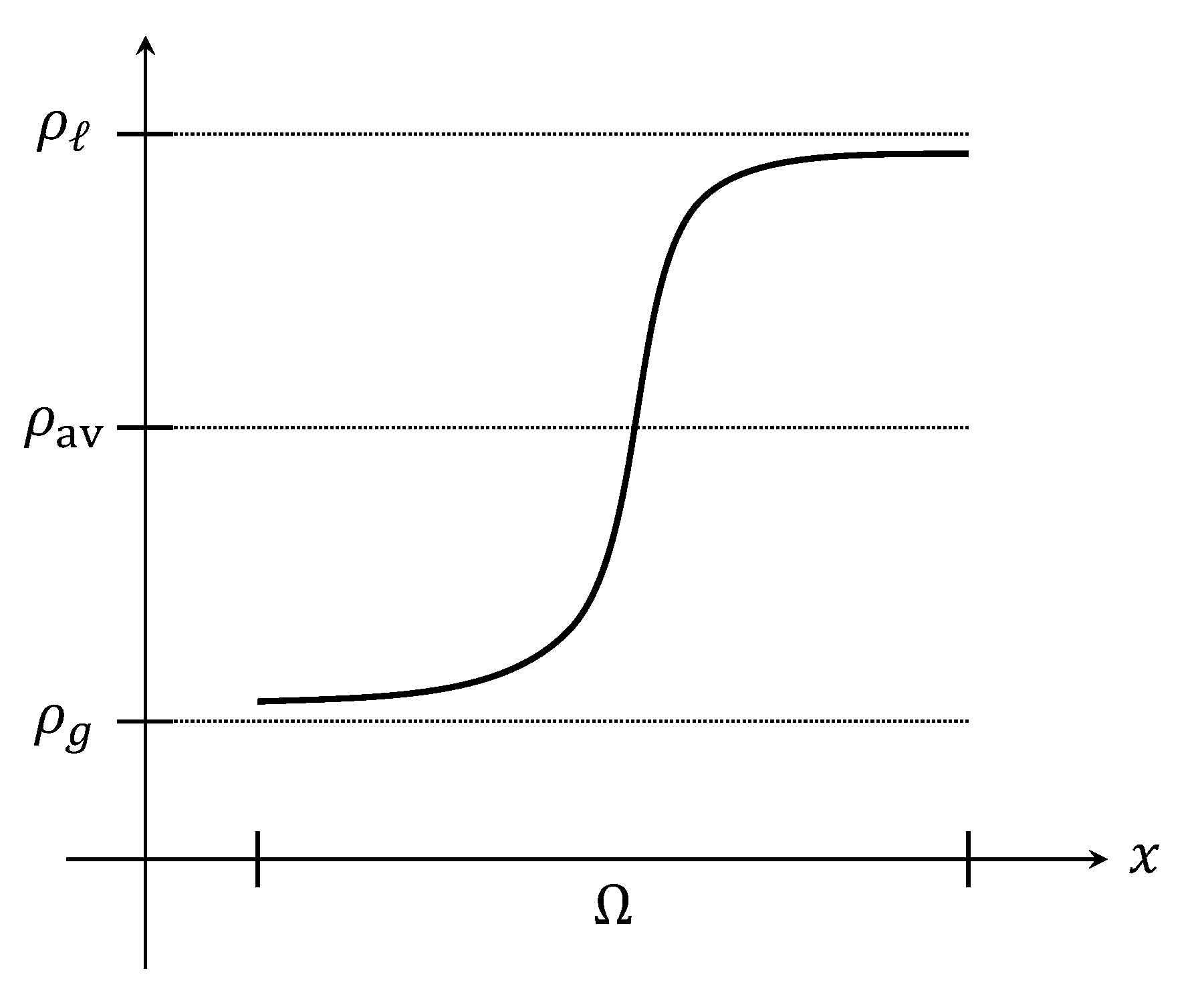}
    \caption{An energetically stable equilibrium}
    \label{FM}
\end{figure}

In the case of periodic boundary condition, if $(\rho,u)$ is an equilibrium, we only conclude that $u$ is a constant.
 However, we may assume that $u=0$ by using a moving frame with the constant speed $u$; see e.g.\ \cite{GKT}.
 The definitions of energetic stability and non-degeneracy are the same as in the case of $\partial\Omega\neq\emptyset$.
 Moreover, an energetically stable equilibrium satisfying the constraint \eqref{Eav} is a constant function $\rho_\mathrm{av}$ if and only if $-\Psi_{\rho\rho}(\rho_\mathrm{av})\le\varepsilon\alpha$ as in the case of $\partial\Omega\neq\emptyset$.
 However, the dimension $m=\ker B(\rho^e)\ge1$.
 Indeed, if $\Omega=\mathbb{T}^n$, $\rho^e(\cdot+c)$ is also an equilibrium for any $c\in\mathbb{R}^n$.
 Since
\[
    c\cdot\nabla\left(\Psi_\rho(\rho^e)-\varepsilon\Delta\rho^e\right)=0
\]
implies that $c\cdot\nabla\rho^e\in\ker B(\rho^e)$, we observe that $m\ge1$ unless $\rho^e$ is a constant.
 We say that $c\in\mathbb{R}^n\setminus\{0\}$ is a constant direction of $\rho^e$ if $c\cdot\nabla\rho^e\equiv0$.
 For simplicity, we shall often assume that there is no constant direction of $\rho^e$, i.e., $c\cdot\nabla\rho^e\not\equiv0$ for any $c\in\mathbb{R}^n\setminus\{0\}$.

We shall discuss its dynamic stability in a simple Hilbert space framework to avoid technical complexity.
 Our main results are summarized as follows.
\begin{thm}[isolated case] \label{TM1}
Assume that $1\le n\le3$.
 Assume that $\mu>0$ and $2\mu+\lambda>0$.
 Let $\Omega$ be a smooth bounded domain in $\mathbb{R}^n$.
 Assume that $\Psi\in C^3$.
 Let $(\rho^e,0)$ (with $\rho^e\in C^3(\bar{\Omega})$) be a positive energetically stable equilibrium of \eqref{ECM}, \eqref{ECMM2} with \eqref{EB}.
 Assume that $\rho^e$ is non-degenerate and isolated.
 Then, there exists small $\eta>0$ such that if the initial value $\rho_0=\left.\rho\right|_{t=0}$, $\left.u_0=u\right|_{t=0}$ satisfies
\[
	\| \rho_0 - \rho^e \|_{H^2}
	+ \| u_0 \|_{H^1} \le \eta, \quad
	\int_\Omega \rho_0\, dx = \int_\Omega \rho^e\,dx, \quad
	\left. u_0 \right|_{\partial\Omega} = 0, \quad
	\frac{\partial\rho_0}{\partial\nu} = 0 \ \text{on}\ \partial\Omega,
\]
then there exists a unique global-in-time solution $(\rho,u)$ of \eqref{ECM}, \eqref{ECMM2} and \eqref{EB} with initial data $(\rho_0,u_0)$.
 Moreover,
\[
	\left\| \rho(t) - \rho^e \right\|_{H^2}
	+ \| u(t) \|_{H^1} \le Ce^{-\kappa t}
	\quad\text{for}\quad t>0,
\]
with some $\kappa>0$ and $C$.
\end{thm}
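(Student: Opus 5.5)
We will apply the generalized principle of linearized stability of Prüss, Wilke and Simonett (2013) in the Hilbert space setting. With $\sigma=\rho-\rho^e$, the system \eqref{ECM}, \eqref{ECMM2}, \eqref{EB} becomes
\[
z_t + A z = F(z), \qquad z=(\sigma,u)\in X_0 := H^1_{\mathrm{av}}(\Omega)\times L^2(\Omega)^n .
\]
Here the linearization at $(\rho^e,0)$ is
\[
A(\sigma,u) = \bigl(\operatorname{div}(\rho^e u),\; (\rho^e)^{-1}Lu + \nabla B_0\sigma\bigr),
\qquad B_0 = -\varepsilon\Delta + \Psi_{\rho\rho}(\rho^e).
\]
We take the domain $X_1 := \{\sigma\in H^3_{\mathrm{av}} : \partial_\nu\sigma=0\}\times (H^2\cap H^1_0)^n$. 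The mass constraint $\int_\Omega\rho_0 = \int_\Omega\rho^e$ is preserved by the flow, so $\sigma$ stays average-free. The nonlinearity $F$ collects the terms $\operatorname{div}(\sigma u)$, $(u\cdot\nabla)u$, $((\rho^e+\sigma)^{-1}-(\rho^e)^{-1})Lu$ and $\nabla(\Psi_\rho(\rho^e+\sigma)-\Psi_\rho(\rho^e)-\Psi_{\rho\rho}(\rho^e)\sigma)$.

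\textbf{Step 1: maximal regularity.} We show that $A$ has maximal $L^2$-regularity on $X_0$, with trace space $(X_0,X_1)_{1/2,2} = H^2\times H^1_0$ carrying the Neumann condition. This is exactly the space of the data in Theorem~\ref{TM1}. We treat the system as a parabolic--hyperbolic-type coupled system in which the density gains regularity through the Korteweg term. Concretely, we first handle the principal part $(\sigma,u)\mapsto(\rho^e\operatorname{div}u,\ (\rho^e)^{-1}Lu - \varepsilon\nabla\Delta\sigma)$. After the symmetrizing change of inner product $\langle\sigma,\tilde\sigma\rangle = \varepsilon\int\nabla\sigma\cdot\nabla\tilde\sigma$ and $\langle u,\tilde u\rangle=\int\rho^e u\cdot\tilde u$, this part becomes $m$-accretive plus a sectorial perturbation. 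The lower-order terms are then absorbed as relatively bounded perturbations.

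In a Hilbert space, maximal $L^2$-regularity is equivalent to $-A$ generating an analytic semigroup. We expect to prove analyticity via resolvent estimates of the form $|\lambda|\,\|z\|_{X_0}+\|z\|_{X_1}\lesssim\|(\lambda+A)z\|_{X_0}$ for $\operatorname{Re}\lambda\ge\lambda_0$. These would follow from energy estimates testing with $(-\varepsilon\Delta\sigma,\rho^e u)$ and with $(\Delta^2\sigma, Lu)$-type multipliers, combined with elliptic regularity for $L$ and for the Neumann Laplacian.

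\textbf{Step 2: nonlinear estimates.} Using $n\le3$ and Sobolev embeddings ($H^2\subset L^\infty$, $H^1\subset L^6$), we verify $F\in C^1(V,X_0)$ on a neighborhood $V$ of $0$ in the trace space, with $F(0)=0$ and $F'(0)=0$. The positivity of $\rho^e$ together with $\|\sigma\|_{H^2}$ small keeps $\rho>0$. The hypothesis $\Psi\in C^3$ gives the $C^1$ property of the pressure remainder. Some terms, such as $\sigma Lu$, need the quasilinear version of the principle; if so, we place them in the operator $A(z)$, which is Lipschitz from the trace space into $\mathcal{L}(X_1,X_0)$.

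\textbf{Step 3: spectral analysis (main obstacle).} We must show that $\sigma(-A)\subset\{\operatorname{Re}\lambda<0\}$. Since the spectrum is discrete (compact resolvent), it suffices to exclude eigenvalues $\lambda$ with $\operatorname{Re}\lambda\ge0$. Suppose $\lambda z + Az = 0$. We test the first equation with $P_1 B_0\bar\sigma$ and the second with $\rho^e\bar u$. Integrating by parts, the coupling terms $\int\operatorname{div}(\rho^e u)\,\overline{B_0\sigma}$ and $\int\rho^e\nabla B_0\sigma\cdot\bar u$ cancel in real part, which yields
\[
\operatorname{Re}\lambda\Bigl(\langle B(\rho^e)\sigma,\sigma\rangle + \int\rho^e|u|^2\Bigr) + \int Lu\cdot\bar u = 0 .
\]
Energetic stability \eqref{ESt} gives $\langle B\sigma,\sigma\rangle\ge0$. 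Hence $\int Lu\cdot\bar u=0$, so $u=0$ by Korn's inequality and the Dirichlet condition. The equations then force $\nabla B_0\sigma=0$, i.e.\ $B(\rho^e)\sigma=0$, and since $\rho^e$ is isolated and non-degenerate, $\sigma=0$.

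The delicate point is that $\lambda$ may be complex and $\langle B\sigma,\sigma\rangle$ need not be coercive a priori. We handle this by arguing on the real and imaginary parts, or by first ruling out purely imaginary $\lambda\neq0$ by the same identity and then noting that $\lambda=0$ gives the kernel directly. Non-degeneracy with $m=0$ means $\ker A=\{0\}$, so $0$ is not in the spectrum either.

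\textbf{Conclusion.} The principle of linearized stability then yields global existence, uniqueness, and exponential decay in $H^2\times H^1$ for data that are $\eta$-close in the trace space.
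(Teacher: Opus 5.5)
Your overall architecture is the paper's: quasilinear reformulation on $X_0=H^1_{\mathrm{av}}(\Omega)\times(L^2(\Omega))^n$ with trace space $X_{1/2}$, the Pr\"uss--Simonett principle (Corollary~\ref{CSP}), and the spectral step via the multipliers $B(\rho^e)\bar\sigma$ and $\rho^e\bar u$. That step is exactly Lemma~\ref{LKI}, Theorem~\ref{TSp} and Lemma~\ref{LSp0}, and your real-part argument is correct: $u=0$ by Korn and the Dirichlet condition, then $B(\rho^e)\sigma=0$, then $\sigma=0$ by non-degeneracy.

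\textbf{The gap is Step~1, maximal regularity.} Accretivity in the weighted inner product does not give analyticity: an $m$-accretive operator only generates a contraction semigroup, and here the form is not sectorial. With $\operatorname{div}(\rho^e u)$ in the first component one has $\operatorname{Re}\langle A_{00}z,z\rangle=\int_\Omega Lu\cdot\bar u\,dx$. However, $\operatorname{Im}\langle A_{00}z,z\rangle=-2\varepsilon\operatorname{Im}\int_\Omega\operatorname{div}(\rho^e u)\,\overline{\Delta\sigma}\,dx$. This is of size $\|u\|_{H^1}\|\sigma\|_{H^2}$ and is controlled neither by the real part nor by $\|z\|_{X_0}$, so the numerical range lies in no sector. (With $\rho^e\operatorname{div}u$, as you wrote, the leftover $\varepsilon\int_\Omega\Delta\sigma\,\nabla\rho^e\cdot\bar u\,dx$ even destroys quasi-accretivity in $X_0$.) Since $L/\rho^e$ acts only on $u$, it cannot be the sectorial perturbation that produces smoothing in $\sigma$; the parabolicity comes from the mixed-order coupling itself.

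The multiplier estimates you ``expect'' are not carried out, and they meet boundary terms that the energy level does not control. A cross term such as $\int_\Omega\Delta u\cdot\nabla\bar\sigma\,dx$ produces $\int_{\partial\Omega}\partial_\nu u\cdot\nabla\bar\sigma\,d\mathcal{H}^{n-1}$. Testing with $\Delta^2\bar\sigma$ needs $\sigma\in H^4$ and produces $\partial_\nu\Delta\sigma$ on $\partial\Omega$, which is not prescribed (the momentum equation ties it to $\nu\cdot Lu$). The paper explicitly says a bilinear-form proof is difficult because $A_0$ is far from self-adjoint. It instead takes (A1) from Kotschote's theorem (Lemma~\ref{LMax}, which uses the variables $w=\operatorname{div}u$, $\tau=\Delta\rho$ and a diagonalization), and gives a self-contained argument only for $n=1$.

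\textbf{The same gap affects Step~3.} A compact resolvent gives discrete spectrum, but that alone does not exclude eigenvalues with $\operatorname{Re}\lambda\to0$ as $|\operatorname{Im}\lambda|\to\infty$. So ruling out $\operatorname{Re}\lambda\ge0$ does not yet give the uniform gap $\omega>0$ required in (S4). The paper gets this gap from the resolvent estimate for $A_0$ on $\Sigma_{\pi/2+\delta}\setminus B_R$ (Theorem~\ref{TRes}, deduced from Corollary~\ref{CMax} by a lower-order perturbation). That estimate confines all eigenvalues of $A_0$ of large modulus to $\Sigma_{\pi/2-\delta}$ and leaves only finitely many in $B_R$ (Theorem~\ref{TSp2}).

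If you replace Step~1 by an appeal to Kotschote's maximal regularity result (or another maximal regularity theorem for this mixed-order system) and add this sector argument, the rest of your proof goes through as in the paper.
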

\begin{thm}[general case] \label{TM2}
Assume that $1\le n\le3$ and that $\Psi\in C^3$.
 Assume that $\mu>0$ and $2\mu+\lambda>0$.
 Let $\Omega$ be a smooth bounded domain in $\mathbb{R}^n$.
 Let $(\rho^e,0)$ (with $\rho^e\in C^3(\bar{\Omega})$) be a positive energetically stable equilibrium of \eqref{ECM}, \eqref{ECMM2} with \eqref{EB}.
 Assume that $\rho^e$ is non-degenerate.
 Then there exists (small) $\eta>0$ such that if the initial data $\rho_0$, $u_0$ satisfying
\[
	\| \rho_0 - \rho^e \|_{H^2} + \| u_0 \|_{H^1} \le \eta, \quad
	\int_\Omega \rho_0\, dx = \int_\Omega \rho^e\, dx, \quad
    \left. u_0 \right|_{\partial\Omega}=0, \quad
    \frac{\partial\rho_0}{\partial\nu}=0
    \ \text{on}\ \partial\Omega,
\]
there exists a unique global-in-time solution $(\rho,u)$ of \eqref{ECM}, \eqref{ECMM2} and \eqref{EB} with initial data $(\rho_0,u_0)$.
 Moreover, there is $\rho_{**}\in\mathcal{E}$ such that
\[
	\left\| \rho(t) - \rho_{**} \right\|_{H^2}
	+ \left\| u(t) \right\|_{H^1}
	\le Ce^{-\kappa t}
\]
with some $\kappa>0$ and $C>0$.
 The conclusion is still valid when $\Omega=\mathbb{T}^n$ provided that there is no constant direction of $\rho^e$ (under no boundary conditions for $u_0$, $\rho_0$ and $u$, $\rho$.)
\end{thm}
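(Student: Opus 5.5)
We will prove Theorem~\ref{TM2} by applying the generalized principle of linearized stability of Pr\"uss, Simonett and Wilke. It concerns abstract quasilinear problems $z_t + A(z)z = F(z)$ that are normally stable at an equilibrium $z_*$. Its conclusion is that solutions starting near $z_*$ exist globally and converge exponentially, in the phase space, to some equilibrium near $z_*$.

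\textbf{Step 1: abstract setting.} We rewrite the problem in the perturbation $z=(r,u)=(\rho-\rho^e,u)$. We take the phase space $X_\gamma$ to consist of $(r,u)\in H^2\times H^1$ with the boundary conditions $\partial_\nu r=0$, $u|_{\partial\Omega}=0$ and $\int_\Omega r\,dx=0$. The base space is $X_0=H^1_{\mathrm{av}}\times L^2$ and the domain is $X_1=(H^3\times H^2)\cap$(boundary conditions). Mass conservation makes the average-free constraint invariant.

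The principal part is
\[
A(z)\begin{pmatrix} r\\ u\end{pmatrix}=\begin{pmatrix} \rho\operatorname{div}u\\ \rho^{-1}Lu-\varepsilon\nabla\Delta r\end{pmatrix}.
\]
Because $n\le 3$, the embeddings $H^2\subset L^\infty$ and $H^1\subset L^6$ hold. These give $A\in C^1(X_\gamma,\mathcal{L}(X_1,X_0))$ and make the lower-order terms $(u\cdot\nabla)u$, $u\cdot\nabla\rho$ and $\nabla\Psi_\rho(\rho)$ $C^1$ from $X_\gamma$ to $X_0$. This is where $\Psi\in C^3$ and $\rho^e\in C^3$ enter.

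Maximal $L^p$ (or $L^2$) regularity of $A(z_*)$ on $X_0$ follows from its triangular, parabolic--hyperbolic-coupled structure, which is in fact dissipative in a suitable energy inner product. We can prove this via an energy estimate for the linear Korteweg system. Alternatively, we view the system as the coupled $\varepsilon$-Korteweg problem, which is parabolic in the Douglis--Nirenberg sense (orders $3$ and $2$). We then cite known maximal regularity of the linearized Korteweg system in $H^1\times L^2$.

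\textbf{Step 2: equilibria and the kernel.} Near $z_*=(0,0)$, the equilibria are exactly $(\rho-\rho^e,0)$ with $\rho\in\mathcal{E}$ and the prescribed mass. By non-degeneracy they form an $m$-dimensional $C^1$ manifold.

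The linearization is
\[
\mathcal{A}(r,u)=\bigl(\rho^e\operatorname{div}u,\ (\rho^e)^{-1}Lu+\nabla(B(\rho^e)r)\bigr),
\]
using $\nabla P_1=\nabla$. Its kernel is $\{(r,0): B(\rho^e)r=0\}$, since $\nabla(Br)=0$ together with $Br\in H^1_{\mathrm{av}}$ forces $Br=0$. By non-degeneracy this kernel equals the tangent space of the equilibrium manifold. In the torus case, the absence of a constant direction lets us eliminate the Galilean velocity mode: we fix the total momentum $\int\rho u$, which is conserved, to be zero.

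\textbf{Step 3 (main obstacle): normal stability.} We must show that $0$ is a semisimple eigenvalue of $\mathcal{A}$ and that the rest of the spectrum lies in $\{\operatorname{Re}\lambda>0\}$.

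Since $\Omega$ is bounded, $X_1\hookrightarrow X_0$ is compact, so the spectrum consists of eigenvalues. Suppose $\mathcal{A}z=\lambda z$, that is, $\lambda r=\rho^e\operatorname{div}u$ and $\lambda u=(\rho^e)^{-1}Lu+\nabla Br$. Test the second equation with $\rho^e\bar u$ and use the first to write $-\int \rho^e\,\nabla Br\cdot\bar u=\int Br\,\overline{\operatorname{div}(\rho^e u)}$. This produces
\[
\lambda\int\rho^e|u|^2+\bar\lambda\,\langle Br,r\rangle_{\ast}=\int Lu\cdot\bar u ,
\]
where $\langle Br,r\rangle$ is nonnegative by energetic stability. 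Here the natural pairing is weighted, and I would check carefully that $\operatorname{div}(\rho^e u)$ rather than $\rho^e\operatorname{div}u$ is what appears; if not, I would change variables to a momentum-type unknown so that the identity closes.

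Taking real parts gives $\operatorname{Re}\lambda\,(\|\sqrt{\rho^e}u\|^2+\langle Br,r\rangle)=\int Lu\cdot\bar u\ge c\|\nabla u\|^2$. Hence $\operatorname{Re}\lambda\le 0$ forces $u=0$, then $\lambda r=0$ and $\nabla Br=0$, so $z\in\ker\mathcal{A}$ and $\lambda=0$.

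For semisimplicity, suppose $\mathcal{A}z=(r_0,0)$ with $Br_0=0$. Pairing with the kernel element via the same identity gives $\langle Br_0,r_0\rangle$-type terms that vanish, and this forces $r_0=0$. Here positivity of $B$ on a complement of $\ker B$, which follows from $B\ge0$, self-adjointness and compact resolvent, is used.

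\textbf{Step 4: conclusion.} The generalized principle then gives global existence and exponential convergence in $X_\gamma=H^2\times H^1$ to some $\rho_{**}\in\mathcal{E}$. Uniqueness comes from local well-posedness.
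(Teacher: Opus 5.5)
Your route is the paper's: the Pr\"{u}ss--Simonett--Wilke principle in $X_0=H^1_{\mathrm{av}}\times L^2$, with the identity of Lemma~\ref{LKI} giving the spectrum, the kernel and semisimplicity. However, Step~1 does not verify (A2) as written.
\begin{itemize}
\item With first component $(\rho^e+r)\operatorname{div}u_1$, $A(z)$ does not map $X_1$ into $X_0$. In general $\int_\Omega(\rho^e+r)\operatorname{div}u_1\,dx=-\int_\Omega\nabla(\rho^e+r)\cdot u_1\,dx\neq0$, so the first component is not average-free.
\item The terms you call lower order are not $C^1$ from $X_\gamma$ to $X_0$ for $n=2,3$. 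For $u\in H^1$, $r\in H^2$ and $n=3$ one only has $(u\cdot\nabla)u\in L^{3/2}$ and $\nabla(u\cdot\nabla r)=(\nabla u)^T\nabla r+(\nabla^2 r)u\in L^{3/2}$. So they miss $L^2$ and $H^1$ respectively; for $n=2$ one only gets $L^p$, $p<2$.
\end{itemize}
The paper avoids this by keeping the conservative form and putting all convective terms into the quasilinear part (Proposition~\ref{PBa}):
$A(z)z_1=\bigl(\operatorname{div}((\rho^e+r)u_1),\ \varepsilon\nabla B_0r_1+L_0u_1/(\rho^e+r)+(u\cdot\nabla)u_1\bigr)$, with only the pressure difference in $F$.
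The linearization then automatically has first component $\operatorname{div}(\rho^e u)$. Your Step~2 formula $\rho^e\operatorname{div}u$ drops the term $u\cdot\nabla\rho^e$ contributed by $F'(0)$. With the correct form, the hedge in Step~3 is unnecessary.

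Second, Step~3 only shows that every nonzero eigenvalue has positive real part, whereas (S4) needs a uniform $\omega>0$. This requires sectoriality of $A_0$ with angle $<\pi/2$, which the paper gets in Theorem~\ref{TRes} from maximal regularity of $A_{00}$ (cited from \cite{Ko}) plus a lower-order perturbation. Sectoriality leaves only finitely many eigenvalues in $\{0<\operatorname{Re}\lambda\le 1\}$. Your energy route to (A1) cannot supply this. In the energy inner product the coupling terms are skew, so you only get $\operatorname{Re}\langle A_{00}z,z\rangle=\int_\Omega L_0u\cdot\bar u\,dx\ge0$. That is a contraction semigroup with no smoothing in $r$, not maximal regularity, so you must cite \cite{Ko} as the paper does.

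Your semisimplicity sketch is fine once made precise as in Theorem~\ref{TSSi}. The identity gives $\int L_0u\cdot\bar u\,dx=\int (Br_0)\,\bar r\,dx=0$ by self-adjointness of $B$ alone; positivity of $B$ on a complement of $\ker B$ is not needed. The compact resolvent then gives Fredholm index zero, which upgrades $\ker A_0^2=\ker A_0$ to $X_0=\ker A_0\oplus\operatorname{im}A_0$.

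On $\mathbb{T}^n$ you add the normalization $\int\rho u\,dx=0$. It is not among the hypotheses, so you are proving a different statement — but it cannot be dropped.
\begin{itemize}
\item For small $c_0\neq0$, $(\rho^e(x-c_0t),c_0)$ is a solution whose admissible data lie within $|c_0|\,|\mathbb{T}^n|^{1/2}$ of $(\rho^e,0)$. Yet $\|u(t)\|_{H^1}\equiv|c_0|\,|\mathbb{T}^n|^{1/2}$ does not decay.
\item Linearly, $A_0(0,c)=(c\cdot\nabla\rho^e,0)$ is a nonzero element of $\ker A_0$ for every $c\neq0$, since there is no constant direction. Hence $\ker A_0\cap\operatorname{im}A_0\neq\{0\}$ and (S3) fails.
\item The paper's proof of Theorem~\ref{TSSi} uses $u|_{\partial\Omega}=0$ and therefore only covers bounded domains.
\end{itemize}
So your detection of the Galilean mode is correct, and the torus part as stated is false. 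Your repair, however, is a single sentence. The constraint $\int(\rho^e+r)u\,dx=0$ is nonlinear, whereas Theorem~\ref{TGSP} works on open subsets of $X_\gamma$. You would need to:
\begin{enumerate}
\item pass to the frame moving with velocity $c=\int\rho_0u_0\,dx/\int\rho_0\,dx$;
\item note that $\{\int\rho^e u\,dx=0\}$ contains $\operatorname{im}A_0$, because $\int\rho^e\nabla(Br)\,dx=-\int(B\nabla\rho^e)\,r\,dx=0$ and $\int L_0u\,dx=0$;
\item verify normal stability on that subspace;
\item choose a formulation in which the constraint becomes linear, for instance with the momentum $\rho u$ as unknown.
\end{enumerate}
The conclusion you can then expect is convergence to the travelling state $(\rho_{**}(x-ct),c)$, not to $(\rho_{**},0)$.
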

\begin{rem} \label{RMain}
In the case of $\Omega=\mathbb{T}^n$, even if we do not assume that there is no constant direction of $\rho^e$, we conclude the same result provided that the term $\left\|u(t)\right\|_{H^1}$ in the last formula is replaced by
\[
    \left\|u(t)-c\right\|_{H^1}
\]
for some $c\in\mathbb{R}^n$ which is a constant direction of $\rho^e$.
\end{rem}
Note that we do not assume any smallness condition on $\rho^e$ nor $\nabla\rho^e$.

The dynamic stability result of Theorem~\ref{TM2} is consistent with numerical experiment for $\Omega=\mathbb{T}^1$ in \cite{GKT}, where an equilibrium having one minimum and one maximum looks dynamically stable.
 In \cite{GKT}, $\Psi$ is taken as a double-well potential.

To analyze the system \eqref{ECM}, \eqref{ECMM2} with \eqref{EB}, we rewrite them into an abstract evolution equation.
 If we introduce a linear operator $P(U_*)$ depending on $U_*=(\rho_*,u_*)^T$ defined by
\begin{align*}
	P(U_*)U &= \left(
	\begin{array}{cc}
	\operatorname{div}(\rho_* u) \\
	-\nabla(\varepsilon\Delta\rho) + \displaystyle\frac{1}{\rho_*} Lu + (u_* \cdot \nabla)u
	\end{array}
	\right), \quad
	U = \dbinom{\rho}{u} \\
	&= \left(
	\begin{array}{cc} 
   0 & \operatorname{div}(\rho_*\cdot) \\ 
   -\nabla(\varepsilon\Delta) & \displaystyle\frac{L}{\rho_*} + (u_* \cdot \nabla) 
  \end{array}
	\right)
	\dbinom{\rho}{u},
\end{align*}
then \eqref{ECM}, \eqref{ECMM2} can be written as
\[
	U_t + P(U)U = F_0(U), \quad
	F_0(U) = \dbinom{0}{-\nabla\Psi_\rho(\rho)}.
\]
We notice that if we impose \eqref{EB}, then the total mass $\int_\Omega\rho\,dx$ of the solution of \eqref{ECM} and \eqref{ECMM2} is conserved.
 In other words, the total mass is time-independent.
 Indeed,
\[
	\frac{d}{dt} \int_\Omega \rho(x,t)\, dx
	= -\int_\Omega \operatorname{div}(\rho u)\, dx
	= -\int_{\partial\Omega} \nu\cdot\rho u\, d\mathcal{H}^{n-1} = 0
\]
by the condition $u=0$ on $\partial\Omega$, where $\mathcal{H}^{n-1}$ denotes the $n-1$ dimensional Hausdorff measure.
 It turns out that it is more convenient to write the system for $\tilde{\rho}=\rho-\rho^e$ and $u$ since $\int_\Omega\tilde{\rho}_0\, dx=0$ implies $\int_\Omega\tilde{\rho}\, dx=0$ for all time.
 The system for $\tilde{U}=(\tilde{\rho},u)^T$ can be written
\[
	\tilde{U}_t + \tilde{P}(\tilde{U})\tilde{U} = \tilde{F}(\tilde{U})
\]
with
\begin{align*}
	&\tilde{P}(\tilde{U}) = \left(
	\begin{array}{cc} 
   0 & \operatorname{div}\left( (\rho^e+\tilde{\rho})\cdot \right) \\ 
   -\nabla(\varepsilon\Delta) & \displaystyle\frac{L}{\rho^e+\tilde{\rho}} + (u \cdot \nabla)
   \end{array}
	\right), \\
	&\tilde{F}(\tilde{U}) = \dbinom{0}{-\nabla\Psi_\rho(\rho^e+\tilde{\rho}) + \nabla\Psi_\rho(\rho^e)}	
\end{align*}
since $\nabla\varepsilon\Delta\rho^e=\nabla\Psi_\rho(\rho^e)$.
 To write \eqref{ECM}, \eqref{ECMM2} with \eqref{EB} we fix function spaces and the domain of the operator $\tilde{P}$ to include the boundary condition.
 The basic space we take is
\[
	X_0 = H_\mathrm{av}^1(\Omega) \times \left(L^2(\Omega)\right)^n, \quad
	H_\mathrm{av}^1(\Omega) = \left\{ \rho \in H^1(\Omega) \biggm| \int_\Omega\rho\,dx = 0 \right\}.
\]
As a solution space we take
\[
	X_1 = \left\{ \rho \in H^3(\Omega) \cap H_\mathrm{av}^1(\Omega) \biggm|
	\frac{\partial\rho}{\partial\nu} = 0\ \text{on}\ \partial\Omega \right\} 
	\times \left\{ u \in \left( H^2(\Omega) \right)^n \Bigm|
	u = 0\ \text{on}\ \partial\Omega \right\}.
\]
Then the real interpolation space $(X_0,X_1)_{1/2,2}$ agrees with the complex interpolation space $[X_0,X_1]_{1/2}$ since $X_i$ ($i=1,2$) are Hilbert spaces
 (see e.g.\ \cite[Corollary~4.37]{L2}).
 In this particular case,
\begin{align*}
	X_{1/2} := (X_0,X_1)_{1/2,2} 
	&= \left\{ \rho \in H^2(\Omega) \cap H_\mathrm{av}^1(\Omega) \biggm|
	\frac{\partial\rho}{\partial\nu} = 0\ \text{on}\ \partial\Omega \right\} \\
	&\times \left\{ u \in \left( H^1(\Omega) \right)^n \Bigm|
	u = 0\ \text{on}\ \partial\Omega \right\}.
\end{align*}
For small $U=(\rho,u)^T$ in $X_{1/2}$, we set
\[
	A(U) : X_1 \to X_0
\]
such that
\[
	A(U) U_1 = \tilde{P}(U)U_1 \quad\text{with}\quad U = (\rho,u)^T, \quad
	U_1 = (\rho_1, u_1)^T \in X_1.
\]
This is well defined for $1\le n\le3$ since the Sobolev embedding $\|\rho\|_\infty\le C\|\rho\|_{H^2}$ for $\int_\Omega\rho\,dx=0$ holds in these dimensions (Proposition \ref{PBa}).
 We set
\[
	F(U) := \tilde{F}(U)
\]
for $U=(\rho,u)^T \in X_{1/2}$.
 Then the equation \eqref{ECM} \eqref{ECMM2} with \eqref{EB} can be written as
\begin{equation} \label{EAb}
	\frac{dU}{dt} + A(U)U = F(U)
\end{equation}
where $U=(\tilde{\rho},u)^T$.

The basic strategy to prove stability is to apply generalized stability principle due to J.\ Pr\"{u}ss, M.~Wilke and G.\ Simonett \cite{PWS} in the form of \cite[Theorem~5.3.1]{PS}.
 The major assumptions one has to check are spectral properties of linearized operator $A_0$ of $A(U)U-F(U)$ at $U=0$.
 In the case of an isolated equilibrium, we shall prove that all spectra of $A_0$ have a positive real part by using energetical stability and non-degeneracy condition.
 A key identity is
\begin{equation} \label{Ekey}
	\xi \int_\Omega B(\rho^e) \bar{\rho}\rho\, dx 
	+ \bar{\xi} \int_\Omega \rho^e |u|^2\, dx
	+ \int_\Omega u\cdot L \bar{u}\, dx = 0
\end{equation}
for $(\xi+A_0)U=0$, where $B(\rho^e)=-\varepsilon\Delta+\Psi_{\rho\rho}(\rho^e)$, which is a linearized operator of $-\varepsilon\Delta\rho+\Psi_\rho(\rho)$ around $\rho^e$;
 here $\bar{\rho}$ denotes the complex conjugate of $\rho$.
 Energetically stability condition says that $\int_\Omega B(\rho^e)\bar{\rho}\rho\,dx\ge0$.
 Since $\mu$ and $\lambda$ satisfy $\mu>0$ and $\lambda+2\mu>0$ so that $\int_\Omega\bar{u}\cdot Lu\,dx\ge0$, we see that $\operatorname{Re}\xi\le0$, $\operatorname{Im}\xi=0$.
 Non-degeneracy condition implies that $0$ is not an eigenvalue.

In the case of the periodic boundary condition, the analysis is more involved because equilibria (stationary solutions) consist of a manifold of positive dimension.
 In particular, $0$ is an eigenvalue.
 Even in this case \cite[Theorem~5.3.1]{PS} is applicable.
 Our identity implies that all spectra are non-negative.
 Our non-degeneracy condition implies that $\rho^e$ is linearly normally stable in the sense of J.\ Pr\"{u}ss and G.\ Simonett \cite[Theorem~5.3.1]{PS}.
 For this purpose, we shall show that $0$ is a semi-simple eigenvalue of $A_0$ and that the dimension of $\ker A_0$ equals the dimension of the manifold of equilibria, of course in the case of a bounded domain, this approach is still valid.

For a linearized operator, we further need maximum regularity.
 This has been proved by \cite{Ko} even when the base space $X_0$ is $L^p$ type under the ellipticity assumption $\mu>0$ and $\lambda+2\mu>0$.
 In \cite[Theorem~2.1]{Ko}, $\lambda$, $\mu$ and $\varepsilon$ are allowed to depend on the time and the space variables to get the ($L^p$ in space and time) maximum regularity result.
 We also note that a general theory of a parabolic mixed order system based on Newton polygons yields the maximal regularity with different exponents in space and time for the linearized operator of constant coefficients in $\mathbb{R}^n$ as explained in \cite[\S4.4]{DK}.
 In our Hilbert space case, it suffices to prove the sectoriality of $A_0$ with angle less than $\pi/2$ to get maximum regularity (see e.g.\ \cite[Proposition~2.3]{Sa}).
 However, it turns out that it is difficult to prove based on bilinear form associated with $A_0$ since $A_0$ is far from self-adjoint.
 There is a general theory for operators consists of blocks like $A_0$ (see Subsection~\ref{SSAE}) by \cite{AH}.
 However, our operator $A_0$ is not diagonal dominated so the general theory of \cite{AH} does not apply.
 In the case of one dimensional setting $n=1$, it is easy to prove the sectoriality so we give its proof.

Since the maximum regularity in particular implies the sectoriality, $(\zeta+A_0)^{-1}$ exists for sufficiently large $\zeta>0$.
 Moreover, $X_1$ is compactly embedded in $X_0$, it turns out that $(\zeta+A_0)^{-1}$ is compact operator in $X_0$ for large $\zeta>0$.
 Thus, all spectra of $A_0$ consists of eigenvalues with no accumulation points in $\mathbb{C}$.
 By \eqref{Ekey}, we conclude that all spectra of $A_0$ has a positive real part except $0$ (Theorem~\ref{TSp}).

Although dynamic stability like Theorem~\ref{TM1} and Theorem~\ref{TM2} should be a fundamental topic, such a problem was not well studied when $\Omega$ is a bounded domain or $\mathbb{T}^n$ especially when $\Psi$ is not convex.
 In \cite{Ko2}, M.~Kotschote proved the dynamic stability of equilibrium when $\Omega$ is a bounded domain under the boundary condition \eqref{EB}.
 In his case $\Psi$ is allowed to be non-convex.
 A potential type external force is also allowed.
 However, if we consider the case where there is no external force, it is assumed that $\Psi_{\rho\rho}+\varepsilon\alpha$ is always assumed to be positive where $\alpha$ is the minimal positive eigenvalue of the Neumann Laplacian.
 This assumption implies that the energy
\[
	E(\rho) = \int_\Omega  \left\{ \frac\varepsilon2 |\nabla\rho|^2 + \Psi(\rho) \right\} dx
\]
is strictly convex, so there exists at most one critical point which is a minimizer; see Section \ref{SEq}.
 Since an equilibrium is a critical point of $E$ with constraint \eqref{Eav} and since $\rho=\rho_\mathrm{av}$ itself is always an equilibrium, there is only one equilibrium $\rho=\rho_\mathrm{av}$.
 Thus, an equilibrium in \cite{Ko2} must be a constant if there is no external force.
 Our stability results apply for not only such a trivial equilibrium but also for a non-constant equilibrium provided that it is energetically stable and non-degenerate.
 In \cite{Ko2}, stability of equilibrium with constant temperature is actually discussed for the non-isothermal Navier-Stokes-Korteweg equations in $L^p$ setting.
 We note that our dynamic stability result extends to the non-isothermal case but in the framework of Hilbert space it applies only to one dimensional setting, i.e.\ $n=1$.
 We need $L^p$ maximal regularity result as in \cite{Ko}, \cite{Ko2}.
 We shall discuss such an extension in our forthcoming paper.
 In this paper we restrict ourselves into isothermal case to clarify the structure of the problem by avoiding technical complexity since the paper \cite{Ko2} is very involved because the setting is quite general, for example, $\lambda$, $\mu$ and $\varepsilon$ may not be constants.

Although there are many papers on mathematical analysis for the Navier-Stokes-Korteweg equations, the study of the boundary value problem is quite limited.
 M.~Kotschote \cite{Ko} constructed a local smooth solution both for bounded and exterior domains for isothermal version.
 It is extended by \cite{Ko1} for the non-isothermal case.
 T.~Kobayashi, M.~Murata and H.~Saito \cite{KMS} derived $L^p$ maximal regularity for bounded and exterior domain and discuss the stability of a constant equilibrium.

Concerning derivation of NSK, Van der Waals \cite{vW, vW2} observes that a phase transition boundary caused by a steep gradient of the density, which is called as a diffuse interface.
 Based on his idea, Korteweg \cite{Kor}, who was a student of him, gives  the stress tensor including the term $\nabla\rho \otimes \nabla \rho$ of the Navier-Stokes equation.
 Dunn and Serrin \cite{DS} generalize the Korteweg's work and provide the system \eqref{ECM}--\eqref{ECMM} with the Korteweg tensor.
 More recently, Heida and M\'{a}lek \cite{HM} also formulate \eqref{ECM}--\eqref{ECMM} by maximizing the entropy production; see also a nice review article by M\'alek and Pr\r{u}\v{s}a \cite{MP}.
 Freist\"{u}hler and Kotschote \cite{FK1, FK2} derive the Navier-Stokes-Allen-Cahn system and the Navier-Stokes-Chan-Hilliard system which describe two phase flow of mixture materials from some model of Korteweg type.
 Note that the equation derived in \cite{FK1} is different from \cite{DS} and \cite{FK2} for non-isothermal case.
 Gorban and Karlin \cite{GK} derive the Korteweg tensor from the Boltzmann equation.

Concerning solvability in $\mathbb{R}^n$, especially, stability of a constant state, Danchin and Desjardins \cite{DD} show global existence of a solution around the motionless state $(\rho_*, 0)$ with a small initial data $u_0 \in (B^{\frac{n}{2}}_{2,1}\cap B^{\frac{n}{2}-1}_{2,1})\times B^{\frac{n}{2}-1}_{2,1}$, where $B^{\frac{n}{2}}_{2,1}$ denotes the usual homogeneous Besov space. 
Hattori and Li \cite{HL1, HL2} show the global existence of a $H^{N+1} \times H^{N}$ solution  with a small 
$u_0 \in H^{N+1}\times H^N$, where $N$ is an integer satisfying that $N\geq [n/2]+2$ and $[n/2]$ denotes the integer part of $n/2$. We also cite results of Tan and R. Zhang, X, Zhang and Tan \cite{TZ, ZT}  Tan, Wang and Xu \cite{TWX}, and Wang and Tan \cite{WT} including decay rates for $L^2$ class. 
Murata and Shibata \cite{MS} study an $L^p$-in-time and $L^q$-in-space setting by the maximal regularity.  In recent studies, Chikami and Kobayashi \cite{CK}, as well as Kobayashi and Murata \cite{KM} and Kobayashi and Tsuda \cite{Kobayashi-Tsuda} prove the asymptotic stability of the above steady state satisfying the critical condition, {\rm i.e., } $P'(\rho_*) = 0$ including decay rates for the whole space problem.

For the Navier-Stokes-Cahn-Hilliard equation, a similar dynamic stability is proved by \cite{A}, where the \L ojasiewicz-Simon inequality is involved.
 This approach does not require non-degeneracy but it needs the analyticity of $\Psi$.

This paper is organized as follows.
 In Section~\ref{SEq}, we discuss structure of the equilibrium set.
 In Section~\ref{SL}, we give a rigorous definition of operators.
 We also discuss the kernel of $A_0$.
 We recall the maximum regularity result.
 In Section~\ref{SA}, we apply the abstract stability principle due to \cite{PS}.
 Since the paper is dedicated to late Professor Hermann Sohr, in Subsection~\ref{SSH} we also mention one of his seminal and by now classical result for the Stokes operator when we prove that $A_{00}$ is invertible in the case that $\Omega$ is a bounded domain.

\section{Equilibrium} \label{SEq}

In this section, we prove that the velocity is constant in equilibrium state.
 We notice that it is highly non-trivial to show its non-degeneracy although it is rather easy to show the existence of an energetically stable equilibrium.

We first recall the energy identity.
\begin{prop} \label{PEE}
Let $\Omega$ be a smooth bounded domain in $\mathbb{R}^n$ or $\Omega=\mathbb{T}^n$ ($n\ge1$).
 Let $(\rho,u)$ be a smooth solution to \eqref{ECM}, \eqref{ECMM}.
 Then
\begin{equation} \label{EEid}
    \frac{d}{dt} \int_\Omega \left( \rho \frac{|u|^2}{2} + \Psi(\rho) + \frac{\varepsilon|\nabla\rho|^2}{2} \right)\, dx
    = -\int_\Omega Lu\cdot u\, dx
\end{equation}
provided that $(\rho,u)$ satisfies $u\cdot\nu=0$, $\partial\rho/\partial\nu=0$ on the boundary $\partial\Omega$ (if it exists).
\end{prop}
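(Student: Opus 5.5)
The plan is a direct computation: differentiate each of the three energy terms in time, use the equations to eliminate time derivatives, and integrate by parts, checking that the boundary terms vanish under $u\cdot\nu=0$ and $\partial\rho/\partial\nu=0$. We work with the momentum equation in the form \eqref{ECMM}, so positivity of $\rho$ is not needed.

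\emph{Kinetic energy.} Using \eqref{ECM} we have
\[
\frac{d}{dt}\int_\Omega \rho\frac{|u|^2}{2}\,dx=\int_\Omega\Bigl(\rho_t\frac{|u|^2}{2}+\rho u\cdot u_t\Bigr)dx
=\int_\Omega\Bigl(-\operatorname{div}(\rho u)\frac{|u|^2}{2}+\rho u\cdot u_t\Bigr)dx .
\]
Integrating by parts with $u\cdot\nu=0$ turns the first term into $\int_\Omega \rho u\cdot\nabla\frac{|u|^2}{2}\,dx=\int_\Omega \rho u\cdot (u\cdot\nabla)u\,dx$. This gives
\[
\frac{d}{dt}\int_\Omega\rho\frac{|u|^2}{2}\,dx=\int_\Omega u\cdot \rho\bigl(u_t+(u\cdot\nabla)u\bigr)dx .
\]
Substituting \eqref{ECMM} and \eqref{EEP}, the kinetic energy derivative equals
\[
-\int_\Omega Lu\cdot u\,dx-\int_\Omega \rho u\cdot\nabla\Psi_\rho(\rho)\,dx+\int_\Omega\rho u\cdot\nabla(\varepsilon\Delta\rho)\,dx .
\]

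\emph{Potential and gradient energies.} For the potential term, $\frac{d}{dt}\int_\Omega\Psi(\rho)\,dx=\int_\Omega\Psi_\rho(\rho)\rho_t\,dx=-\int_\Omega\Psi_\rho(\rho)\operatorname{div}(\rho u)\,dx$. Integrating by parts with $u\cdot\nu=0$ gives $\int_\Omega\rho u\cdot\nabla\Psi_\rho(\rho)\,dx$, which cancels the second term above. For the gradient term, $\frac{d}{dt}\int_\Omega\frac{\varepsilon}{2}|\nabla\rho|^2\,dx=\int_\Omega\varepsilon\nabla\rho\cdot\nabla\rho_t\,dx$. Integrating by parts with $\partial\rho/\partial\nu=0$ gives $-\int_\Omega\varepsilon\Delta\rho\,\rho_t\,dx=\int_\Omega\varepsilon\Delta\rho\operatorname{div}(\rho u)\,dx$. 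A second integration by parts, using $u\cdot\nu=0$, gives $-\int_\Omega\rho u\cdot\nabla(\varepsilon\Delta\rho)\,dx$, which cancels the Korteweg term. Adding the three contributions yields \eqref{EEid}.

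\emph{Main point of care.} The only delicate step is the boundary bookkeeping. The boundary terms arising are $\int_{\partial\Omega}(\cdot)\,\rho u\cdot\nu$ and $\int_{\partial\Omega}\varepsilon\rho_t\,\partial\rho/\partial\nu$, and both vanish by the hypotheses. No boundary term comes from $L$, since we leave $-\int_\Omega Lu\cdot u\,dx$ as it is. For $\Omega=\mathbb{T}^n$, all boundary terms are absent by periodicity. Smoothness of the solution justifies differentiating under the integral sign.
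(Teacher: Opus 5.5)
Your proposal is correct and follows essentially the same route as the paper. Both compute the kinetic-energy derivative via the continuity equation and one integration by parts, substitute \eqref{ECMM} with $\nabla p=\rho\nabla\Psi_\rho(\rho)$, and cancel the pressure and Korteweg terms against the derivatives of $\int_\Omega\Psi(\rho)\,dx$ and $\int_\Omega\frac{\varepsilon}{2}|\nabla\rho|^2\,dx$, with boundary terms vanishing by $u\cdot\nu=0$ and $\partial\rho/\partial\nu=0$; your boundary bookkeeping is simply more explicit than the paper's.
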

\begin{proof}
    This follows just integration by parts.
 We calculate
\[
    \frac{d}{dt} \int_\Omega \rho \frac{|u|^2}{2}\, dx 
    = \int_\Omega \rho_t \frac{|u|^2}{2}\, dx
    + \int_\Omega \rho uu_t\, dx.
\]
Using \eqref{ECM}, we observe that
\begin{align*}
    \int_\Omega \rho_t \frac{|u|^2}{2}\, dx
    &= \int_\Omega \left( -\operatorname{div}(\rho u) \right) \frac{|u|^2}{2}\, dx \\
    &= \int_\Omega \rho u(u\cdot\nabla)u\, dx
\end{align*}
by integration by parts and $u\cdot\nu=0$ on $\partial\Omega$.
 We thus observe that
\begin{equation} \label{EK1}
    \frac{d}{dt} \int_\Omega \rho \frac{|u|^2}{2}\, dx
    = -\int_\Omega Lu\cdot u\, dx
    - \int_\Omega \rho u \nabla\Psi_\rho\, dx
    + \int_\Omega \rho u \nabla(\varepsilon\Delta\rho)\, dx
\end{equation}
by \eqref{ECMM}.
 Using \eqref{ECM}, we observe that
\begin{equation}
\begin{aligned} \label{EInt}
    \frac{d}{dt} \int_\Omega \left( \Psi(\rho) + \frac{\varepsilon|\nabla\rho|^2}{2} \right)\, dx
    &= \int_\Omega (\Psi_\rho \rho_t + \varepsilon\nabla\rho \cdot \nabla\rho_t)\, dx \\
    &= \int_\Omega \left( -\Psi_\rho \operatorname{div}(\rho u) + \varepsilon\Delta\rho(\operatorname{div}\rho u) \right)\, dx
\end{aligned}
\end{equation}
by integration by parts with $\partial\rho/\partial\nu=0$ on $\partial\Omega$.
 By \eqref{EK1} and \eqref{EInt}, integrating by parts with $u\cdot\nu=0$ on $\partial\Omega$, we obtain \eqref{EEid}.
\end{proof}

Note that this identity \eqref{EEid} still holds if we replace
\[
    \Psi(\rho) + \frac{\varepsilon|\nabla\rho|^2}{2}
    \quad\text{by}\quad \Psi(\rho,\nabla\rho)
\]
provided that $p=\rho\Psi_\rho-\Psi$ and $\rho\nabla(\varepsilon\Delta\rho)$ in \eqref{ECMM} is replaced by $\rho\nabla\operatorname{div}\Psi_{\nabla\rho}$.
 In particular, $\varepsilon$ is allowed to depend on $\rho$ if we replace $\rho\nabla(\varepsilon\Delta\rho)$ by $\rho\nabla\left(\operatorname{div}(\varepsilon\nabla\rho)\right)$.
 We emphasize that there is no ``dissipation" term in \eqref{EEid} due to the Korteweg term $\rho\nabla\operatorname{div}\Psi_{\nabla\rho}$.

We shall estimate the dissipation $\int_\Omega Lu\cdot u\,dx$ in \eqref{EEid} due to viscosities.
 In the following, we assume that $\Omega$ is a smooth bounded domain or $\mathbb{T}^n$ if no assumptions on $\Omega$ are stated.

Let $L_0$ be the Lam\'e operator defined in $\left(L^2(\Omega)\right)^n$ by
\begin{align}
	L_0 u &= -\mu\Delta u - (\mu+\lambda) \nabla(\operatorname{div}u) \label{ELame1} \\
	&= -\operatorname{div} S, \label{ELame2} 
\end{align}
where
\[
	S = 2\mu D(u) + \lambda \operatorname{div} uI, \quad
	D(u) = (\nabla u+\nabla u^T)/2
\]
with
\[
	D(L_0) = \left\{ u \in \left(H^2(\Omega)\right)^n \Bigm|
	u = 0 \ \text{on}\ \partial\Omega \right\}.
\]
We recall a well-known properties of $L_0$.
 For $n\times n$ matrices $X$ and $Y$, the inner product is defined as
\[
	X:Y = \operatorname{tr}(XY^T)
\]
where $\operatorname{tr}$ denotes the trace.
 The corresponding norm is defined
\[
	|X| = (X:X)^{1/2}.
\]
\begin{lem} \label{LLame}
For $u\in D(L_0)$,
\begin{equation} \label{ELH1}
	\int_\Omega u \cdot L_0 u\, dx
	= \int_\Omega \mu |\nabla u|^2 + (\mu+\lambda) |\operatorname{div}u|^2\, dx.
\end{equation}
If $\mu>0$ and $2\mu+\lambda\ge\delta$ with $\delta\in(0,\mu)$, then
\[
	\int_\Omega u\cdot L_0 u\, dx
	\ge \delta \int_\Omega |\nabla u|^2\, dx.
\]
If $\int_\Omega u\cdot L_0u\, dx=0$, then $u$ is constant and if moreover, $\partial\Omega\neq\emptyset$, then $u=0$.
\end{lem}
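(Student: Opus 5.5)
The plan is to prove the identity \eqref{ELH1} by integration by parts and then deduce the coercivity and rigidity statements from it, using pointwise inequalities together with Korn-type information coming from the zero boundary condition or from periodicity.

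\textbf{Step 1: the identity.} For $u\in D(L_0)$ we use \eqref{ELame1} and integrate by parts once in each term:
\[
-\int_\Omega \mu\Delta u\cdot u\,dx=\mu\int_\Omega|\nabla u|^2dx, \qquad
-\int_\Omega(\mu+\lambda)\nabla(\operatorname{div}u)\cdot u\,dx=(\mu+\lambda)\int_\Omega|\operatorname{div}u|^2dx.
\]
The boundary terms vanish because $u=0$ on $\partial\Omega$. In the periodic case there is no boundary, so there are no boundary terms at all. Adding the two gives \eqref{ELH1}.

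\textbf{Step 2: the coercive bound.} If $\mu+\lambda\ge0$, then \eqref{ELH1} already gives the bound $\ge\mu\int|\nabla u|^2\ge\delta\int|\nabla u|^2$. The real case is $\mu+\lambda<0$, where we need an upper bound for $\int|\operatorname{div}u|^2$. Pointwise we only have $|\operatorname{div}u|^2\le n|\nabla u|^2$, which is too weak. The key tool is instead the integral identity
\[
\int_\Omega|\operatorname{div}u|^2dx\le\int_\Omega|\nabla u|^2dx .
\]
It follows from
\[
\int_\Omega|\nabla u|^2dx=\int_\Omega|\operatorname{curl}u|^2dx+\int_\Omega|\operatorname{div}u|^2dx,
\]
or, in arbitrary dimension, from $\int\partial_iu^j\partial_ju^i\,dx=\int\partial_iu^i\partial_ju^j\,dx$, obtained by integrating by parts twice. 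Both identities hold for $u\in H^1_0$ by density of $C_c^\infty$, and for periodic $u$ directly. With this, for $\mu+\lambda<0$ we get
\[
\mu\int_\Omega|\nabla u|^2+(\mu+\lambda)\int_\Omega|\operatorname{div}u|^2\ge(2\mu+\lambda)\int_\Omega|\nabla u|^2\ge\delta\int_\Omega|\nabla u|^2 .
\]
Since $\delta<\mu$, the two cases together give the claim.

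\textbf{Step 3: the rigidity statement.} Since $\mu>0$ and $2\mu+\lambda>0$, we may take some $\delta\in(0,\mu)$ with $2\mu+\lambda\ge\delta$. Then $\int u\cdot L_0u\,dx=0$ forces $\nabla u=0$ by Step 2, so $u$ is constant on each connected component (and $\Omega$ is assumed connected as a domain). If $\partial\Omega\neq\emptyset$, the trace condition $u=0$ on $\partial\Omega$ gives $u\equiv0$.

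The only nontrivial point is the inequality $\int|\operatorname{div}u|^2\le\int|\nabla u|^2$ used in Step 2. I would justify it by the double integration by parts for smooth compactly supported or periodic fields and then extend it to $H^2\cap H^1_0$ by density in $H^1$.
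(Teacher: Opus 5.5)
Your proposal is correct and follows essentially the paper's argument: the same single integration by parts for \eqref{ELH1}, and the same double integration by parts giving $\int_\Omega|\operatorname{div}u|^2\,dx=\int_\Omega\nabla u:\nabla u^T\,dx\le\int_\Omega|\nabla u|^2\,dx$, with the pointwise bound $\nabla u:\nabla u^T\le|\nabla u|^2$ as the step you leave implicit. The paper avoids your case split on the sign of $\mu+\lambda$ by writing $\mu=\delta+(\mu-\delta)$, which leaves the coefficient $2\mu+\lambda-\delta\ge0$ in front of $\int_\Omega|\operatorname{div}u|^2\,dx$, and it leaves implicit the rigidity conclusion that you spell out.
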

\begin{proof}
We use \eqref{ELame1} for $\int_\Omega u\cdot L_0u\,dx$.
 Integrating by parts yields \eqref{ELH1}.
 Integrating by parts twice, we obtain
\begin{align*}
    \int_\Omega |\operatorname{div}u|^2\, dx
	&= \sum_{i,j=1}^n \int_\Omega u_{x_j}^j u_{x_i}^i \, dx \\
    &= -\sum_{i,j=1}^n \int_\Omega u^j u_{x_i x_j}^i \, dx
    + \int_{\partial\Omega} u^j \nu_j u_{x_i}^i \, d\mathcal{H}^{n-1} \\
    &= \sum_{i,j=1}^n \int_\Omega u_{x_i}^j u_{x_j}^i \, dx 
    - \int_{\partial\Omega} u^j \nu_i u_{x_j}^i \, d\mathcal{H}^{n-1} + 0 \\
    &= \sum_{i,j=1}^n \int_\Omega u_{x_i}^j u_{x_j}^i \, dx,
\end{align*}
where $f_{x_i}=\partial f/\partial x_i$, $f_{x_i x_j}=\partial^2f/(\partial x_i\partial x_j)$ for $f=f(x_1,\ldots,x_n)$.
 Since
\[
	\sum_{i,j=1}^n u_{x_i}^j u_{x_j}^i
	= \nabla u : \nabla u^T \le |\nabla u|^2, 
\]
\eqref{ELH1} yields
\begin{align*}
	\int_\Omega \left\{ \mu |\nabla u|^2\, dx
	+ (\mu+\lambda) |\operatorname{div}u|^2 \right\}
	& = \int_\Omega \left\{ \delta |\nabla u|^2 + (\mu-\delta) |\nabla u|^2
	+ (\mu+\lambda) (\operatorname{div}u)^2 \right\} dx \\
	&\ge \int_\Omega \left\{ \delta |\nabla u|^2 + (\mu-\delta) (\operatorname{div}u)^2
	+ (\mu+\lambda) (\operatorname{div}u)^2 \right\} dx.
\end{align*}
Since $2\mu+\lambda\ge\delta$, this yields (i\hspace{-0.1em}i).
 (This estimate is often called the Korn inequality.)
\end{proof}

The energy identity together with Lemma \ref{LLame} implies that the velocity is constant.
\begin{cor} \label{CEq}
    Assume that $\mu>0$ and $2\mu+\lambda>0$.
 Let $(\rho,u)$ ($\rho>0$) be a time-independent solution (equilibrium) to \eqref{ECM} and \eqref{ECMM} with \eqref{EB} if $\partial\Omega\neq\emptyset$.
 Then $u$ is a constant.
 Thus $\rho$ must satisfy $\nabla(\Psi_\rho-\varepsilon\Delta\rho)=0$ in $\Omega$.
 If $\partial\Omega\neq\emptyset$, then $u=0$.
\end{cor}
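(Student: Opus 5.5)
The plan is to read off Corollary~\ref{CEq} from the energy identity of Proposition~\ref{PEE} combined with Lemma~\ref{LLame}. Let $(\rho,u)$ be a time-independent solution with $\rho>0$, satisfying \eqref{EB} when $\partial\Omega\neq\emptyset$. In particular $u=0$ (hence $u\cdot\nu=0$) and $\partial\rho/\partial\nu=0$ on $\partial\Omega$, so Proposition~\ref{PEE} applies. I will tacitly use smoothness of the equilibrium, as Proposition~\ref{PEE} requires; for the equilibria considered later, e.g.\ $\rho^e\in C^3(\bar\Omega)$, this is available.

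Since $\rho$ and $u$ do not depend on $t$, the energy
\[
\int_\Omega \Bigl( \rho\frac{|u|^2}{2} + \Psi(\rho) + \frac{\varepsilon|\nabla\rho|^2}{2} \Bigr)\, dx
\]
is constant in time. Its time derivative therefore vanishes, and \eqref{EEid} gives $\int_\Omega Lu\cdot u\,dx = 0$. Here $L=L_0$ and $u\in D(L_0)$ in the bounded domain case; on $\mathbb{T}^n$ there are no boundary terms, so \eqref{ELH1} holds verbatim. The constants satisfy $\mu>0$ and $2\mu+\lambda>0$, so we may choose $\delta\in(0,\mu)$ with $2\mu+\lambda\ge\delta$. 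Lemma~\ref{LLame} then yields $\delta\int_\Omega|\nabla u|^2\,dx\le 0$, hence $\nabla u=0$ and $u$ is constant on the connected set $\Omega$. (If $\Omega$ is not assumed connected, the same argument gives constancy on each component.) If $\partial\Omega\neq\emptyset$, the boundary condition $u=0$ forces $u\equiv 0$, which is also the last assertion of Lemma~\ref{LLame}.

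For the equation satisfied by $\rho$, insert a constant $u$ into \eqref{ECMM2}. This is legitimate because $\rho>0$, and \eqref{ECMM2} is equivalent to \eqref{ECMM} via \eqref{EEP}. With $u$ constant we have $u_t=0$, $(u\cdot\nabla)u=0$ and $Lu=0$, so \eqref{ECMM2} reduces to
\[
\nabla\bigl(\Psi_\rho(\rho)-\varepsilon\Delta\rho\bigr)=0 \quad\text{in}\ \Omega.
\]
One should also note consistency with \eqref{ECM}: it reduces to $u\cdot\nabla\rho=0$. This is automatic if $u=0$, and is simply an extra constraint in the periodic case that we do not need for the stated conclusion.

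There is no real obstacle here. The only point requiring care is the justification of applying Proposition~\ref{PEE}, namely the regularity of the equilibrium and the boundary conditions. In the torus case this means checking that the integrations by parts in \eqref{ELH1} produce no boundary terms, which is immediate by periodicity.
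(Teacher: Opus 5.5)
Your proposal is correct and follows the same route as the paper. The paper simply states that the energy identity of Proposition~\ref{PEE}, which forces $\int_\Omega Lu\cdot u\,dx=0$ for a stationary solution, together with the Korn-type bound of Lemma~\ref{LLame} gives $\nabla u\equiv0$. Substituting a constant $u$ into \eqref{ECMM2}, using $\rho>0$, then yields the equation for $\rho$, exactly as you wrote.
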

It is easy to find an energetically stable equilibrium.
 However, its non-degeneracy is in general difficult to check.

We consider
\begin{equation} \label{E4St}
	\nabla \left( \Psi_\rho(\rho) - \varepsilon\Delta\rho \right)
	= 0 \quad\text{in}\quad \Omega.
\end{equation}
If $\Omega$ has a boundary, we impose the Neumann condition
\begin{equation} \label{E4B}
	\frac{\partial\rho}{\partial\nu}
	= 0 \quad\text{on}\quad \partial\Omega.
\end{equation}
We also impose the constrained average condition
\begin{equation} \label{E4Av}
	\frac{1}{|\Omega|} \int_\Omega \rho\, dx
	= \rho_\mathrm{av},
\end{equation}
where $\rho_\mathrm{av}\in\mathbb{R}$ is a given constant.
\begin{thm} \label{T4Ex}
Assume that $\Omega$ is a smooth bounded domain in $\mathbb{R}^n$ or $\Omega=\mathbb{T}^n$.
 Assume that $\Psi\in C^2$ is bounded from below on $\mathbb{R}$ up to linear function, i.e., $\Psi(\rho)+c_1\rho$ is bounded from below with some $c_1$.
 Assume that $|\Psi_{\rho\rho}|$ is bounded on $\mathbb{R}$.
 Then there exists an energetic stable equilibrium $\rho^e\in C^{2+\sigma}(\bar{\Omega})$ for $\sigma\in(0,1)$ for \eqref{E4St} and \eqref{E4B} satisfying \eqref{E4Av}.
 If $\Psi\in C^3$, then $\rho^e\in C^{3+\sigma}(\bar{\Omega})$.
\end{thm}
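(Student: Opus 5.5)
The plan is the direct method of the calculus of variations. We minimize
\[
E(\rho)=\int_\Omega\Big\{\frac\varepsilon2|\nabla\rho|^2+\Psi(\rho)\Big\}dx
\]
over the admissible class $K=\{\rho\in H^1(\Omega)\mid \frac1{|\Omega|}\int_\Omega\rho\,dx=\rho_\mathrm{av}\}$. A minimizer will be a constrained critical point, hence an equilibrium; minimality will give energetic stability; elliptic regularity will give the smoothness.

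\emph{Step 1: coercivity.} On $K$ the term $\int_\Omega c_1\rho\,dx=c_1|\Omega|\rho_\mathrm{av}$ is a constant. Since $\Psi+c_1\rho\ge -M$, we get $E(\rho)\ge \frac\varepsilon2\|\nabla\rho\|_2^2-M|\Omega|-c_1|\Omega|\rho_\mathrm{av}$, so $E$ is bounded below on $K$. The Poincar\'e inequality on average-free functions (the constant $\alpha$) then shows that minimizing sequences are bounded in $H^1$.

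\emph{Step 2: existence of a minimizer.} We extract a subsequence converging weakly in $H^1$ and strongly in $L^2$ (Rellich; $\Omega$ is smooth bounded or a torus), and a.e. The constraint passes to the limit. The gradient term is weakly lower semicontinuous. For the $\Psi$ term we use Fatou's lemma applied to $\Psi+c_1\rho+M\ge0$, together with convergence of the linear part. Note that $|\Psi_{\rho\rho}|\le C$ gives $|\Psi(\rho)|\le C(1+\rho^2)$, so $E$ is finite on $H^1$. This yields a minimizer $\rho^e\in K$.

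\emph{Step 3: Euler--Lagrange equation and stability.} Because $\Psi_{\rho\rho}$ is bounded, $\Psi_\rho$ is globally Lipschitz. Hence $t\mapsto E(\rho^e+ty)$ is $C^2$ for $y\in H^1_\mathrm{av}$, with
\[
\frac{d^2}{dt^2}E(\rho^e+ty)\Big|_{t=0}=\int_\Omega\{\varepsilon|\nabla y|^2+\Psi_{\rho\rho}(\rho^e)y^2\}\,dx .
\]
We justify differentiation under the integral by dominated convergence. The first variation vanishes: $\int_\Omega\{\varepsilon\nabla\rho^e\cdot\nabla y+\Psi_\rho(\rho^e)y\}dx=0$ for all average-free $y$. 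Equivalently, there is a Lagrange multiplier $\lambda_0$ such that $-\varepsilon\Delta\rho^e+\Psi_\rho(\rho^e)=\lambda_0$ weakly with the natural Neumann condition. The second variation is nonnegative, which is exactly \eqref{ESt}.

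\emph{Step 4: regularity.} This is the main technical point. We bootstrap. Since $\Psi_\rho(\rho^e)$ grows at most linearly, it lies in $L^2$, so $H^2$ regularity for the Neumann problem gives $\rho^e\in H^2$. Iterating through $W^{2,p}$ estimates with $\Psi_\rho(\rho^e)\in L^p$ (Sobolev embedding, finitely many steps in any dimension) gives $\rho^e\in W^{2,p}$ for all $p<\infty$, hence $\rho^e\in C^{1,\sigma}$. Then $\Psi_\rho(\rho^e)\in C^{\sigma}$, since $\Psi_\rho$ is Lipschitz, and Schauder theory for the Neumann problem gives $\rho^e\in C^{2+\sigma}(\bar\Omega)$. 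If $\Psi\in C^3$, then $\Psi_\rho\in C^2$ and $\Psi_\rho(\rho^e)\in C^{1+\sigma}$, so a further Schauder step gives $\rho^e\in C^{3+\sigma}$. Finally, taking the gradient of the equation yields \eqref{E4St}, with \eqref{E4B} and \eqref{E4Av}.
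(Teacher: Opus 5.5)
Your proposal is correct and follows essentially the same route as the paper: the direct method for $E$ on the mass-constrained class (Rellich, Fatou, weak lower semicontinuity), the Euler--Lagrange equation with a Lagrange multiplier, nonnegativity of the second variation for energetic stability, and an $L^p$--Sobolev bootstrap followed by Schauder estimates. Two of your steps are slightly more careful than the paper's, which leaves them implicit: you absorb the linear part of $\Psi$ through the mass constraint rather than normalizing $\Psi\ge0$, and you justify differentiating under the integral using the Lipschitz bound on $\Psi_\rho$.
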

\begin{proof}
This is rather standard.
 We only give a sketch of the proof.
 We may assume that $\Psi\ge0$ by adding a linear function and a constant.
 We consider a functional
\[
	E(\rho) = \int_\Omega \left\{ \frac\varepsilon2 |\nabla\rho|^2 + \Psi(\rho) \right\} dx.
\]
We seek a minimizer in a space
\[
	Y = \left\{ \rho \in H^1(\Omega) \biggm|
	\frac1{|\Omega|} \int_\Omega \rho\, dx = \rho_\mathrm{av} \right\}.
\]

By a direct method we see that there exists a minimizer $\rho^e$ of $E$ in $Y$.
 Indeed, let $\{\rho_m\}\subset Y$ a minimizing sequence, i.e., $E(\rho_m)\to\inf_Y E$ ($<\infty$) as $m\to\infty$.
 Then $\{\rho_m\}$ is bounded in $H^1$ since $\Psi\ge0$.
 By the Rellich compactness theorem, $\{\rho_m\}$ has a strong convergent subsequence in $L^2(\Omega)$.
 We further take a subsequence so that $\{\rho_m\}$ converges to its limit $\rho^e$ almost everywhere.
 We may assume that $\rho_m$ converges to $\rho^e$ weakly in $H^1$ by weak compactness of a bounded set in a Hilbert space.
 Thus
\[
	\int_\Omega \Psi(\rho^e)\, dx
	\le \varliminf_{m\to\infty} \int_\Omega \Psi(\rho_m)\, dx
\]
by Fatou's lemma and the lower semicontinuity of norms under weak topology implies that
\[
	\int_\Omega |\nabla\rho^e|^2 \, dx
	\le \varliminf_{m\to\infty} \int_\Omega |\nabla\rho_m|^2\, dx.
\]
This implies the lower semicontinuity:
$E(\rho^e)\le\varliminf E(\rho_m)=\inf_Y E$.
 We thus conclude that $\rho^e$ is a minimizer of $E$ on $Y$.

Since $\rho^e$ is a critical point,
\begin{align*}
	\left. \frac{d}{d\delta}  E (\rho^e + \delta y) \right|_{\delta=0}
	&= \left. \int_\Omega \left\{ \varepsilon \nabla \rho^e \cdot \nabla y 
    + \varepsilon\delta|\nabla y|^2
	+ \Psi_\rho (\rho^e+\delta y) y \right\} dx \right|_{\delta=0} \\
	&= \int_\Omega \left\{\varepsilon\nabla \rho^e \cdot \nabla y + \Psi_\rho(\rho^e) y \right\} dx = 0
\end{align*}
for all $y\in H_\mathrm{av}^1$.
 In other words, $\rho^e$ is a weak solution of the Euler-Lagrange equation
\begin{equation} \label{E4S}
	-\varepsilon\Delta\rho + \Psi_\rho(\rho) = \gamma \quad\text{in}\quad \Omega, \quad
	\partial\rho/\partial\nu = 0 \quad\text{on}\quad \partial\Omega, 	
\end{equation}
where $\gamma\in\mathbb{R}$ is some constant.
 A bootstrap argument implies that $\rho^e\in C^{2+\sigma}(\bar{\Omega})$.
 Indeed, by the Sobolev embedding (see e.g.\ \cite{E}), $\rho^e\in H^1=W^{1,2}$ implies that $\rho^e\in L^q$ with $1/q\ge1/2-1/n$, $q<\infty$.
 Since $|\Psi_{\rho\rho}|$ is bounded so that $\left|\Psi_\rho(\rho)\right|\le c\left(|\rho|+1\right)$ with some $c>0$, by the $L^q$ elliptic regularity theory (see e.g.\ \cite{E}, \cite{GT}), we see that $\rho^e\in W^{2,q}(\Omega)$.
 By the Sobolev embedding $\rho^e\in L^{q_1}$, $1/q_1 \ge 1/q-2/n$, $q_1<\infty$. This implies $\Psi_\rho(\rho^e)\in L^{q_1}$.
 Again by the $L^q$ elliptic regularity theory, $\rho^e\in W^{2,q_1}$.
 Repeating this procedure, we observe that $\rho^e\in W^{2,r}$ for $1/r-2/n<0$.
 By the Sobolev embedding (Morrey's inequality), $\rho^e\in C^\sigma(\bar{\Omega})$ with $\sigma=2/n-1/r$.
 We now apply Schauder's elliptic regularity theory to get $\rho^e\in C^{2+\sigma}(\bar{\Omega})$ since $\Psi\in C^2$ with $\rho^e\in C^\sigma$ implies that $\Psi_\rho(\rho^e)\in C^\sigma(\bar{\Omega})$.
 Thus $\rho^e$ solves \eqref{E4S} in a classical sense and it is an equilibrium.
 If $\Psi\in C^3$, $\rho^e\in C^{3+\sigma}(\bar{\Omega})$ by Schauder's elliptic regularity theory.

It remains to prove that $\rho^e$ is energetically stable.
 This is easy since $\rho^e$ is a minimizer of $E$ in $Y$.
 By the minimality,
\[
	\left. \frac{d^2}{d\delta^2} E(\rho^e+\delta y) \right|_{\delta=0} \ge 0.
\]
By a direct calculation, we have
\[
	\frac{d^2}{d\delta^2} E(\rho^e+\delta y) 
	= \int_\Omega \left\{\varepsilon|\nabla y|^2 + \Psi_{\rho\rho}(\rho^e+\delta y) y^2 \right\} dx.
\]
Thus, $\rho^e$ is energetically stable.
\end{proof}
\begin{rem}[non-constancy] \label{RNC}
As mentioned in Section~\ref{SI}, an energetically stable equilibrium is not a constant if $-\Psi_{\rho\rho}(\rho_\mathrm{av})>\varepsilon\alpha$, where $\alpha$ is the minimal positive eigenvalue of the Neumann Laplacian ($B_0$ in Section \ref{SSFO}).
 Thus if $\Psi_{\rho\rho}(\rho_\mathrm{av})<0$, for sufficiently small $\varepsilon>0$, the minimizer constructed in the proof of Theorem \ref{T4Ex} is not a constant.
 If $\Psi$ is taken as
 \[
    \Psi(\rho) = W(\rho) + c\rho
 \]
 with $W(\rho)=(\rho^2-1)^2/4$, $c\in\mathbb{R}$, then $\Psi_{\rho\rho}(\rho_\mathrm{av})<0$ provided that $\rho_\mathrm{av}^2<1/3$ since $W''(\rho)=3\rho^2-1$.
 If $\Omega$ is a bounded interval, it is easy to see that the minimizer must be a monotone function provided that $\varepsilon$ is small for such $\Psi$; see e.g.\ \cite{GKT}, where $W$ is more general. 
\end{rem}

In general it is difficult to claim that these energetically stable equilibrium are non-degenerate even in one-dimensional setting i.e.\ $n=1$.
 In the case of the Neumann boundary condition, the non-degeneracy condition for $\rho^e$ is equivalent to saying that if $y$ satisfies
\[
	-\varepsilon\Delta y + W_{\rho\rho}(\rho^e)y = \tilde{\gamma} \quad\text{in}\quad \Omega, \quad
	\partial y/\partial\nu = 0 \quad\text{on}\quad \partial\Omega
\]
with
\[
	\int_\Omega y\, dx = 0
\]
with some $\tilde{\gamma}\in\mathbb{R}$ must be identically equal to zero.

In one-dimensional setting, it turns out (see \cite{GMT}) that if $\Omega$ is a bounded interval, the minimizer of $E$ in $Y$ is non-degenerate provided that $\varepsilon>0$ is small.
 Note that minimizer must be monotone and isolated.
 This can be proved by adjusting the idea of R.~Schaaf \cite{Sch} using what is called time map.
 The detail will be discussed in our forthcoming paper \cite{GMT}.
 In a periodic setting, the set of minimizers forms a one-parameter family parametrized by shifts.
 The non-degeneracy is equivalent to saying that $y$ must be translation of $\rho_x^e$.
 In \cite[Theorem~2.13]{RW}, it is proved that any periodic solution of \eqref{E4St} with $W(\rho)=\frac14(\rho^2-1)^2$, $\varepsilon=1$ is non-degenerate, although it is stated in a different way.
 They write a solution of
\[
    y'' - W_{\rho\rho} (\rho^e)y = \tilde{\gamma}
\]
by using a variation of constant method.
 However, it seems that their proof needs more explanation to achieve the goal even if $\tilde{\gamma}$ is known to be zero.
 In the third paragraph of \cite[p.~5837]{RW}, it is claimed that some function must vanish somewhere but this is not clear. 

\section{Linearized operator} \label{SL}

\subsection{Function spaces and operators} \label{SSFO}

To study large time behavior of a solution of \eqref{EAb}, we shall study basic properties of $A$ and $F$.

We first give a rigorous definition of $A$ and $F$.
 Let $B_0$ be the Laplacian with the Neumann boundary condition in $L_\mathrm{av}^2(\Omega)$, where $L_\mathrm{av}^2(\Omega)$ denotes the space of average-free square-integrable functions i.e.
\[
	L_\mathrm{av}^2(\Omega)
	= \left\{ \rho \in L^2(\Omega) \biggm|
	\int_\Omega \rho\, dx = 0 \right\}.
\]
 In other words, we define
\[
	B_0 \rho = -\Delta \rho
\]
for $\rho\in D(B_0)$ with
\[
	D(B_0) = \left\{ \rho \in H^2(\Omega)
	\cap L_\mathrm{av}^2(\Omega) \biggm|
	\frac{\partial\rho}{\partial\nu} = 0\ \text{on}\ \partial\Omega \right\}.
\]
It is well known that $B_0$ is a non-negative self-adjoint operator with respect to the standard inner product of $L^2(\Omega)$.
 Moreover, $B_0$ has a bounded inverse $B_0^{-1}$ so that $B_0$ is positive in the sense that
\[
	\int_\Omega(B_0\rho)\rho\, dx \ge \delta_0 \|\rho\|_{L^2}^2 \quad\text{for}\quad
	\rho \in L_\mathrm{av}^2(\Omega)
\]
with some constant $\delta_0>0$, since we use the average free space $L_\mathrm{av}^2(\Omega)$.
 It is not difficult to see that
\[
	D(B_0^{1/2}) = H_\mathrm{av}^1(\Omega), \quad
	H_\mathrm{av}^1(\Omega) = H^1(\Omega) \cap L_\mathrm{av}^2(\Omega).
\]
A general theory of a positive self-adjoint operator gives a relation between complex interpolation spaces and domains of fractional powers
\[
	D(B_0^\alpha) = \left[ D(B_0^{\alpha_1}), D(B_0^{\alpha_2}) \right]_\theta, \quad
	\alpha = \alpha_1(1-\theta) + \alpha_2\theta, \quad
	\theta \in (0,1)
\]
for $\alpha_2>\alpha_1$; see \cite[Theorem~4.36]{L2}.
 By reiteration of interpolation, we see that
\[
	D(B_0^{3/2}) = \left\{ \rho \in D(B_0) \Bigm|
	B_0\rho \in D(B_0^{1/2}) \right\};
\]
see e.g.\ \cite[Theorem~1.23]{L2};
 note $[X_0,X_1]_\theta=(X_0,X_1)_{\theta,2}$ ($0<\theta<1$), where the latter is a real interpolation for Hilbert spaces $X_0,X_1$ (\cite[Corollary~4.37]{L2}).
 Since $D(B_0^{1/2})\subset H^1(\Omega)$, by elliptic regularity theory (see e.g.\ \cite{E}), this space equals $H^3(\Omega) \cap D(B_0)$.

We set a basic space
\[
	X_0 := H_\mathrm{av}^1 (\Omega) \times \left(L^2(\Omega)\right)^n
\]
equipped with inner product
\[
	(U_1,U_2)=\int_\Omega \nabla \rho_1\cdot \nabla\bar{\rho}_2 
	+ \int_\Omega u_1 \bar{u}_2\rho_*\, dx
\]
where $\rho_*\in C^3(\bar{\Omega})$, $\rho_*>0$ is a given function for $U_i=(\rho_i,u_i)^T$ with $i=1,2$.
We set a solution space
\[
	X_1 := \left(H^3(\Omega) \cap D(B_0)\right) \times D(L_0).
\]

We set an intermediate space as a real interpolation space
\[
	X_{1/2} := (X_0, X_1)_{1/2,2}.
\]
As we already mentioned, this space equals the complex interpolation space $[X_0,X_1]_{1/2}$ which can be written as fractional power spaces, i.e.,
\[
    X_{1/2} = \left[ D(B_0^{1/2}), D(B_0^{3/2}) \right]_{1/2}
    \times \left[ L^2(\Omega), D(L_0) \right]_{1/2}
    = D(B_0) \times D (L_0^{1/2}).
\]
Thus its explicit form is
\[
	X_{1/2} = \left\{ \rho \in H^2(\Omega) \biggm|
	\frac{\partial\rho}{\partial\nu} = 0\ \text{on}\ \partial\Omega,
	\int_\Omega \rho\, dx = 0 \right\} 
	\times \left\{ u \in H^1(\Omega) \bigm|
	u = 0 \ \text{on}\ \partial\Omega \right\}.
\]
Let $\rho_*\in C^3(\bar{\Omega})$ be a positive function.
 For $U\in X_{1/2}$, we set operators $A(U):X_1\to X_0$, $F(U)\in X_0$ by
\begin{align*}
	& A(U) := \left(
	\begin{array}{cc} 
   0 & \operatorname{div}\left((\rho_*+\rho) \cdot \right) \\ 
   \nabla(\varepsilon B_0) & \displaystyle\frac{L_0}{\rho_*+\rho} + (u \cdot \nabla)
   \end{array}
	\right), \\
	& F(U) := \dbinom{0}{-\nabla\Psi_\rho(\rho_*+\rho) + \nabla\Psi_\rho(\rho_*)}.	
\end{align*}
If $\rho_*$ is a stationary solution, our system can be rewritten as $U_t+A(U)U=F(U)$.
 The spaces $X_0$, $X_1$, $X_{1/2}$ should be interpreted as spaces of column vectors $(\rho,u)^T$ with suitable regularity.

We next study $U$-dependence of $A$ and $F$.
 Let $\mathcal{L}(X_1,X_0)$ denote the space of bounded linear operators from $X_1$ to $X_0$.
 This space is regarded as a Banach space equipped with the operator norm.
\begin{prop} \label{PBa}
Assume that $1\le n\le3$ and $\Psi\in C^3$.
 Then $A:U\mapsto A(U)$ for $U=(\rho,u)^T\in X_{1/2}$ is a $C^1$ mapping with values in $\mathcal{L}(X_1,X_0)$ provided that $\|\rho\|_{H^2}$ is sufficiently small.
 The mapping $U\mapsto F(U)$ is a $C^1$ mapping from $X_{1/2}$ to $X_0$.
\end{prop}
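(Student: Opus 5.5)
The plan is to decompose $A(U)$ into its $U$-dependent entries and show that each one is $C^1$, indeed smooth, as an operator-valued map. The constant entries $\operatorname{div}(\rho_*\,\cdot)$ and $\nabla(\varepsilon B_0)$ do not depend on $U$, so only three pieces need attention. The first is $\rho\mapsto\operatorname{div}(\rho\,\cdot)$. The second is $u\mapsto(u\cdot\nabla)$. The third is $\rho\mapsto L_0/(\rho_*+\rho)$.

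The first two pieces are \emph{linear} in $U$. For a linear map, $C^1$ (indeed $C^\infty$) is equivalent to boundedness from $X_{1/2}$ into $\mathcal{L}(X_1,X_0)$, so it suffices to prove two bilinear estimates.

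For the first piece we need
\[
\|\operatorname{div}(\rho u_1)\|_{H^1}\le C\|\rho\|_{H^2}\|u_1\|_{H^2},\qquad \rho\in H^2,\ u_1\in H^2.
\]
Expanding gives the terms $\nabla\rho\cdot u_1$, $\rho\operatorname{div}u_1$, and their first derivatives. Each of these is a product of factors in $H^2\subset L^\infty$, $H^1\subset L^6$, $H^1\subset L^3$ and $L^2$, and all such products are controlled for $n\le3$ by H\"older's inequality. For example, $\nabla^2\rho\, u_1\in L^2$ because $u_1\in L^\infty$, and $\nabla\rho\,\nabla u_1\in L^2$ because both factors lie in $L^4$.

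This piece also has to land in $H^1_{\mathrm{av}}$. That holds because $\int_\Omega\operatorname{div}(\rho u_1)\,dx=0$, since $u_1=0$ on $\partial\Omega$ (or by periodicity).

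For the second piece we need
\[
\|(u\cdot\nabla)u_1\|_{L^2}\le\|u\|_{L^6}\|\nabla u_1\|_{L^3}\le C\|u\|_{H^1}\|u_1\|_{H^2},
\]
which again uses $n\le3$.

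For the third piece, write $\phi(\rho)=1/(\rho_*+\rho)$. Since $\rho_*$ is continuous and positive on $\bar\Omega$, we have $\rho_*\ge c>0$. Combined with $\|\rho\|_\infty\le C\|\rho\|_{H^2}$ (for average-free $\rho$, $n\le 3$), this keeps $\rho_*+\rho\ge c/2$ once $\|\rho\|_{H^2}$ is small. The key point is that the map $\rho\mapsto\phi(\rho)$ is $C^1$ from a small ball of $H^2$ into $L^\infty$, with derivative $h\mapsto-h/(\rho_*+\rho)^2$. This follows from the pointwise Taylor expansion together with the uniform lower bound, which give
\[
\|\phi(\rho+h)-\phi(\rho)+h\phi(\rho)^2\|_\infty\le C\|h\|_\infty^2 .
\]
Multiplication by an $L^\infty$ function, $g\mapsto (g\,L_0u_1\mapsto gL_0u_1)$, is then a bounded linear map from $L^\infty$ into $\mathcal{L}(X_1,X_0)$, because $\|gL_0u_1\|_{L^2}\le\|g\|_\infty C\|u_1\|_{H^2}$. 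Composing the two gives the $C^1$ property. The smallness assumption on $\|\rho\|_{H^2}$ enters exactly here.

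For $F$, set $G(\rho)=\Psi_\rho(\rho_*+\rho)-\Psi_\rho(\rho_*)$, so that $F(U)=(0,-\nabla G(\rho))^T$. It is then enough to show that $\rho\mapsto G(\rho)$ is $C^1$ from $H^2$ into $H^1$. Its candidate derivative is $h\mapsto\Psi_{\rho\rho}(\rho_*+\rho)h$. We have
\[
\nabla G(\rho)=\Psi_{\rho\rho}(\rho_*+\rho)\nabla(\rho_*+\rho)-\Psi_{\rho\rho}(\rho_*)\nabla\rho_*.
\]
Since $\Psi\in C^3$, the function $\Psi_{\rho\rho}$ is $C^1$, hence locally Lipschitz. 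Using $H^2\subset L^\infty$ and $\nabla h\in L^2$, we can estimate the remainder
\[
\Psi_\rho(\rho_*+\rho+h)-\Psi_\rho(\rho_*+\rho)-\Psi_{\rho\rho}(\rho_*+\rho)h
\]
in $H^1$. Its gradient contains terms like
\[
\bigl[\Psi_{\rho\rho}(\rho_*+\rho+h)-\Psi_{\rho\rho}(\rho_*+\rho)-\Psi_{\rho\rho\rho}(\rho_*+\rho)h\bigr]\nabla(\rho_*+\rho).
\]
This bracket is $o(\|h\|_\infty)$ uniformly, by uniform continuity of $\Psi_{\rho\rho\rho}$ on compact sets. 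We also need continuity of the derivative, meaning continuity of $\rho\mapsto\Psi_{\rho\rho}(\rho_*+\rho)$ as a multiplier on $H^1$, and this follows similarly.

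The main obstacle is this $F$ part. Only $\Psi\in C^3$ is available, so we must use uniform continuity rather than Lipschitz bounds on $\Psi_{\rho\rho\rho}$, and the bookkeeping of products in $L^\infty\times L^2$ must be done carefully. The $A$ part is routine once the embeddings for $n\le3$ are in place.
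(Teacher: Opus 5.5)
Your proposal is correct and follows essentially the same route as the paper. Like the paper, you split $A(U)$ into three parts:
the multiplier $1/(\rho_*+\rho)$, which is $C^1$ into $L^\infty$ via $H^2\hookrightarrow L^\infty$;
the linear term $(u\cdot\nabla)$, bounded from $H^1$ into $\mathcal{L}(H^2,L^2)$;
and the bilinear $H^1$ estimate for $\operatorname{div}(\rho u_1)$.
You also treat $F$ through $\Psi\in C^3$ and $H^2\hookrightarrow L^\infty$, as the paper does. The only minor difference is in $F$: the paper skips your direct remainder computation by writing $\nabla\Psi_\rho(\rho_*+\rho)=\Psi_{\rho\rho}(\rho_*+\rho)\nabla(\rho_*+\rho)$ and applying a product rule to the $C^1$ map $\rho\mapsto\Psi_{\rho\rho}(\rho_*+\rho)\in L^\infty$ and the affine map $\rho\mapsto\nabla(\rho_*+\rho)\in L^2$. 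You, in addition, verify the average-free condition on the first component, which the paper leaves implicit.
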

\begin{proof}
To say that $A$ is $C^1$ from $X_{1/2}$ to $\mathcal{L}(X_1,X_0)$ for sufficiently small $\rho$, it suffices to prove that
\begin{enumerate}
\item[(i)] multiplication operator $1/(\rho_*+\rho)$ is $C^1$ from $X_{1/2}$ to $L^\infty$;
\item[(i\hspace{-0.1em}i)] $u\cdot$ is $C^1$ from $H^1$ to $L^2$;
\item[(i\hspace{-0.1em}i\hspace{-0.1em}i)] $(\rho,u)\mapsto\operatorname{div}\left((\rho_*+\rho)u\right)$ is bounded from $H^2\times H^2$ to $H^1$.
\end{enumerate}
 If $1\le n\le3$, then by the Sobolev embedding $H^2\hookrightarrow L^\infty$, the function $\rho\mapsto1/(\rho_*+\rho)$ is $C^1$ from $H^2$ to $L^\infty$ provided that $\|\rho\|_{H^2}$ is small.
 By the Sobolev embedding, $H^1\hookrightarrow L^4$ provided that $1\le n\le4$ since $\Omega$ is bounded.
 Thus $(u,v)\mapsto u\cdot v$ is a bounded bilinear map from $L^4\times L^4\to L^2$ by the H\"{o}lder inequality.
 Thus $u\cdot$ is $C^1$ from $H^1$ to $L^2$ for $n\le4$.

It remains to prove that $(\rho,u)\mapsto\operatorname{div}\left((\rho_*+\rho)u\right)$ is a bounded bilinear map from $H^2\times H^2$ to $H^1$.
 Fortunately,
\begin{gather*}
	\| \nabla ^2 \rho \cdot u \|_{L^2}
	\le C \|\rho\|_{H^2} \|u\|_\infty
	\le C' \|\rho\|_{H^2} \|u\|_{H^2} \\
	\| \rho \nabla ^2 u \|_{L^2}
	\le C \|\rho\|_\infty \|u\|_{H^2}
	\le C' \|\rho\|_{H^2} \|u\|_{H^2}
\end{gather*}
for $1\le n\le3$ and
\[
	\| \nabla \rho \cdot \nabla u \|_{L^2}
	\le C \|\rho\|_{H^2} \|u\|_{H^2}
\]
for $1\le n\le4$ by Sobolev's embedding with some $C$ and $C'$ independent of $\rho$ and $u$. 
 These bilinear estimates imply that $\rho\mapsto\operatorname{div}(\rho\cdot)$ is $C^1$ from $H^2$ to $\mathcal{L}(X_1,X_0)$.

For $F$, since $\Psi$ is $C^3$ so that $\Psi_{\rho\rho}(\rho_*+\rho)$ is $C^1$, the mapping $\rho\mapsto\Psi_{\rho\rho}(\rho_*+\rho)$ is $C^1$ from $L^\infty$ to $L^\infty$.
 Since
\[
	\nabla \Psi_\rho(\rho_*+\rho)
	= \Psi_{\rho\rho}(\rho_*+\rho) \nabla(\rho_*+\rho)
\]
and $H^2\hookrightarrow L^\infty$ for $1\le n\le3$, we observe that $\rho\mapsto\nabla\Psi_\rho(\rho_*+\rho)$ is $C^1$ from $H^2$ to $H^1$. 
\end{proof}
\subsection{An explicit form of the linearized operator} \label{SSAE} 

We next consider linearized operator of $A(U)U-F(U)$ at $U=(0,0)$.
 We set
\begin{align*}
	A_0 U &= A(0)U + \left(A'(0)U\right) 0 - F'(0)U. \\
	&= A(0)U - F'(0)U.
\end{align*}
We shall calculate $F'(\tilde{U})$ for $\tilde{U}=(\tilde{\rho},\tilde{u})^T$.
 We first notice that
\[
	\partial_\rho F(\tilde{U})\rho
	= \binom{0}{-\nabla \left( \Psi_{\rho\rho}(\rho_*+\tilde{\rho}) \rho \right)}, \quad
	\partial_u F(\tilde{U})u
	= \binom{0}{0}
\]
to get
\[
	F'(\tilde{U})U
	= \dbinom{0}{-\nabla \Psi_{\rho\rho}(\rho_*+\tilde{\rho}) \rho}.
\]
Thus
\[
	F'(0)U
	= \dbinom{0}{-\nabla\Psi_{\rho\rho}(\rho_*)\rho}.
\]
We set an operator in $L_\mathrm{av}^2(\Omega)$
\[
	B(\rho_*) = P_1 \left( \varepsilon B_0 + \Psi_{\rho\rho}(\rho_*) \right),
\]
where $P_1$ is the orthogonal projection of $L^2(\Omega)$ for $L_\mathrm{av}^2(\Omega)$, i.e.,
\[
	P_1 g = g - \frac{1}{|\Omega|} \int_\Omega g\,dx.
\]
We shall always assume $\Psi\in C^3$.
 Using these formulas, we obtain
\[
	A_0 = \left(
	\begin{array}{cc} 
   0 & \operatorname{div}(\rho_* \cdot) \\ 
   \nabla \left(B(\rho_*) \cdot \right) & \displaystyle\frac{L_0}{\rho_*}   \end{array}
	\right)
\] 
so that
\[
	A_0 U = \left(
	\begin{array}{cc}
	\operatorname{div}(\rho_* u) \\
	\nabla \left(B(\rho_*) \rho \right) 	
	+\displaystyle\frac{L_0 u}{\rho_*}
	\end{array}
	\right), \quad
	U = \dbinom{\rho}{u}.
\]

To see the spectral property of the operator $A_0$, we complexify spaces $X_0$ and $X_1$.
 For complex-valued functions, we set
\begin{align*}
	& A(U)U_1 = \left(
	\begin{array}{cc}
   \operatorname{div}(\rho_* u_1) 
   + \operatorname{div}(\rho u_1) \\
   \nabla(\varepsilon B_0 \rho_1)
   + \displaystyle\frac{L_0 u_1}{\rho_*+\rho} + (u\cdot\nabla) u_1
   \end{array}
	\right), \\
	& F(U) = \dbinom{0}{-\nabla\Psi_\rho(\rho_*+\rho) + \nabla\Psi_\rho(\rho_*)}.
\end{align*}
Here we extend $\Psi$ defined on $\mathbb{R}$ to $\mathbb{C}$ so that $\Psi$ is $C^3$ as a function of $(x,y)\in\mathbb{R}^2$ for $z=x+iy\in\mathbb{C}$.
 The linearized operator $A_0$ for a complex-valued function is the same as before, i.e.,
\[
	A_0 U = \left(
	\begin{array}{cc}
	\operatorname{div}(\rho_* u) \\
	\nabla \left(B(\rho_*) \rho \right) 	
	+\displaystyle\frac{L_0 u}{\rho_*}
	\end{array}
	\right), \quad
	U = \dbinom{\rho}{u}.
\]

\subsection{Resolvent problem} \label{SSR}

We consider the resolvent problem
\begin{equation} \label{ERP}
	(\xi + A_0) U = G \quad\text{with}\quad
	G = \dbinom{r}{f},
\end{equation}
where $\xi$ is a complex number, i.e., $\xi\in\mathbb{C}$.
 An explicit form of this equation is
\begin{empheq}[left={\empheqlbrace}]{alignat=2}
  & \xi\rho + \operatorname{div}(\rho_* u)= r, \label{ERD} \\
  & \xi u + \frac{L_0 u}{\rho_*} + \nabla \left( B(\rho_*)\rho \right)= f. \label{ERV}
\end{empheq}
The next result is valid for all dimensions.
\begin{lem} \label{LKI}
Assume that $U=(\rho,u)^T\in X_1$ satisfies \eqref{ERD} and \eqref{ERV} with $\xi\in\mathbb{C}$.
 Then
\begin{equation} \label{EKI}
	\xi \int_\Omega B(\rho_*) \bar{\rho} \rho
	\, dx 
	+ \bar{\xi} \int_\Omega \rho_* |u|^2\,dx
    + \int_\Omega u \cdot L_0 \bar{u}\, dx
	= \int_\Omega \left( rB(\rho_*) \right)\bar{\rho} + \rho_* \bar{f}\cdot u\, dx.
\end{equation}
\end{lem}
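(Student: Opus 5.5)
The identity should follow from a direct energy-type computation: test the momentum equation \eqref{ERV} against $\rho_*\bar{u}$, use the continuity equation \eqref{ERD} to eliminate the coupling term, and then take complex conjugates.

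First, multiply \eqref{ERV} by $\rho_*\bar{u}$ and integrate over $\Omega$. This gives
\[
	\xi \int_\Omega \rho_* |u|^2\,dx + \int_\Omega L_0 u\cdot \bar{u}\,dx
	+ \int_\Omega \rho_*\bar{u}\cdot\nabla\bigl(B(\rho_*)\rho\bigr)\,dx
	= \int_\Omega \rho_* f\cdot\bar{u}\,dx .
\]
All terms are well defined for $U\in X_1$. Indeed, $\rho\in H^3$ and $\rho_*\in C^3(\bar\Omega)$, so $B(\rho_*)\rho\in H^1$; also $\nabla(P_1 g)=\nabla g$, so the projection $P_1$ does not affect the gradient.

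Second, integrate the coupling term by parts. The boundary term vanishes because $u=0$ on $\partial\Omega$, or by periodicity when $\Omega=\mathbb{T}^n$. This yields $-\int_\Omega \operatorname{div}(\rho_*\bar{u})\,B(\rho_*)\rho\,dx$. Since $\rho_*$ is real, the complex conjugate of \eqref{ERD} gives
\[
	\operatorname{div}(\rho_*\bar u)=\bar r-\bar\xi\bar\rho .
\]
Substituting, the coupling term becomes
\[
	\bar\xi\int_\Omega \bar\rho\, B(\rho_*)\rho\,dx-\int_\Omega \bar r\, B(\rho_*)\rho\,dx .
\]
Hence
\[
	\xi \int_\Omega \rho_*|u|^2\,dx+\int_\Omega L_0u\cdot\bar u\,dx+\bar\xi\int_\Omega \bar\rho\,B(\rho_*)\rho\,dx
	=\int_\Omega \rho_* f\cdot\bar u\,dx+\int_\Omega \bar r\,B(\rho_*)\rho\,dx .
\]

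Third, take the complex conjugate of this whole identity. The operators $B(\rho_*)=P_1(\varepsilon B_0+\Psi_{\rho\rho}(\rho_*))$ and $L_0$ have real coefficients ($\rho_*$ is real-valued, $\mu,\lambda,\varepsilon$ are real). Therefore $\overline{B(\rho_*)\rho}=B(\rho_*)\bar\rho$ and $\overline{L_0u}=L_0\bar u$. The conjugated identity is exactly \eqref{EKI}.

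I do not expect a genuine obstacle; the only delicate points are bookkeeping. One must place the conjugations correctly so that $\xi$ multiplies the $B$-term and $\bar\xi$ the kinetic term. One must also check that the boundary terms vanish under \eqref{EB} or periodicity. Finally, one must note that the projection $P_1$ is harmless: it is killed by the gradient in \eqref{ERV}, and it is compatible with pairing against the average-free $\bar\rho$ and $\bar r$.
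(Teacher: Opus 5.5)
Your proof is correct and is essentially the paper's argument. The paper conjugates \eqref{ERV} first, integrates \eqref{ERD} multiplied by $B(\rho_*)\bar\rho$ plus the conjugated momentum equation multiplied by $\rho_*u$, and cancels the coupling terms by the same integration by parts; you test \eqref{ERV} with $\rho_*\bar u$, substitute the conjugate of \eqref{ERD}, and conjugate at the end, which is the same computation up to complex conjugation.
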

\begin{proof}
We take complex conjugate of \eqref{ERV} to get
\begin{equation} \label{ERCJ}
	\bar{\xi} \bar{u} + \frac{L_0\bar{u}}{\rho_*}
    + \nabla \left( B(\rho_*)\bar{\rho} \right) = \bar{f}.
\end{equation}
We consider
\[
	\eqref{ERD}\cdot B(\rho_*)\bar{\rho}
	+ \eqref{ERCJ}\cdot \rho_* u
\]
and integrate over $\Omega$.
 Integration by parts yields the desired identity if we notice that
\[
	\int_\Omega \left\{ \left( B(\rho_*) \bar{\rho} \right) \operatorname{div}(\rho_*u)
	+ \rho_* u \cdot\nabla \left(B(\rho_*) \bar{\rho} \right) \right\} dx=0.
\]
\end{proof}

From now on we consider the case $\rho_*=\rho^e$ i.e.\ $(\rho^e,0)$ is an equilibrium of \eqref{ECM}, \eqref{ECMM2} with \eqref{EB}.
\begin{lem} \label{LSt}
If $\rho_*=\rho^e$ is energetically stable, then
\[
	\int_\Omega \left( B(\rho^e) \bar{\rho} \right)\rho\, dx \ge 0, \quad
	\rho \in D (B_0).
\]
\end{lem}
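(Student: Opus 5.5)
The plan is to reduce the claim directly to the definition \eqref{ESt} of energetic stability. There are three steps: remove the projection $P_1$, integrate by parts using the Neumann condition, and then pass from real-valued to complex-valued $\rho$.

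First, take $\rho\in D(B_0)$, allowing complex values. Since $\int_\Omega\rho\,dx=0$, the constant removed by $P_1$ integrates to zero against $\rho$. Hence
\[
	\int_\Omega \left(B(\rho^e)\bar\rho\right)\rho\,dx
	= \int_\Omega \left(\varepsilon B_0\bar\rho + \Psi_{\rho\rho}(\rho^e)\bar\rho\right)\rho\,dx .
\]
Next, integrate by parts in the term $\int_\Omega(-\Delta\bar\rho)\rho\,dx$. The boundary term vanishes by the Neumann condition $\partial\rho/\partial\nu=0$ (in the periodic case there is no boundary). This gives
\[
	\int_\Omega \left(B(\rho^e)\bar\rho\right)\rho\,dx
	= \int_\Omega \left\{\varepsilon|\nabla\rho|^2 + \Psi_{\rho\rho}(\rho^e)|\rho|^2\right\}dx ,
\]
which is real because $\Psi_{\rho\rho}(\rho^e)$ is real-valued.

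Finally, write $\rho=y_1+iy_2$ with $y_1,y_2$ real-valued. Both lie in $H^1(\Omega)$ and have zero average, since the real and imaginary parts of $\int_\Omega\rho\,dx=0$ vanish separately. Using $|\nabla\rho|^2=|\nabla y_1|^2+|\nabla y_2|^2$ and $|\rho|^2=y_1^2+y_2^2$, the right-hand side splits into the sum of the quadratic forms in \eqref{ESt} evaluated at $y_1$ and at $y_2$. Each of these is nonnegative by energetic stability, so the sum is nonnegative.

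There is no real obstacle. The only points that need care are the disappearance of the $P_1$ correction, which uses the average-free condition on $\rho$, and the complexification, which uses that the real and imaginary parts are separately admissible test functions in \eqref{ESt}.
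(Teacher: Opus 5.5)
Your proof is correct and follows the paper's own argument: integrate by parts to get $\int_\Omega \{\varepsilon|\nabla\rho|^2+\Psi_{\rho\rho}(\rho^e)|\rho|^2\}\,dx$, then invoke \eqref{ESt}. You also spell out two steps the paper leaves implicit, namely that the $P_1$ correction drops out because $\int_\Omega\rho\,dx=0$, and that the complex case reduces to the real and imaginary parts separately.
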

\begin{proof}
Integration by parts yields
\[
	\int_\Omega \left( B(\rho^e) \bar{\rho} \right) \rho\, dx
	= \int_\Omega \left( \varepsilon |\nabla\rho|^2 + \Psi_{\rho\rho}(\rho^e) |\rho|^2 \right) dx \ge 0
\]
since $\rho^e$ is energetically stable.
\end{proof}
We are interested to find point spectrum.
 We consider the resolvent problem \eqref{ERP} with $G=0$, i.e.,
\begin{equation} \label{ER0}
	(\xi + A_0) U = 0, \quad
	U \in D(A_0)
\end{equation}
when $\rho_*=\rho^e$ for energetically stable $\rho^e$.
\begin{thm} \label{TSp}
Assume that $\mu>0$ and $2\mu+\lambda>0$ and that $\rho^e>0$ ($\rho^e\in C^3(\bar{\Omega})$) is energetically stable.
 If there exists a solution $U\not\equiv0$ to \eqref{ER0}, then the real part $\operatorname{Re}\xi\le0$.
 If $\operatorname{Re}\xi=0$, then $\xi=0$.
\end{thm}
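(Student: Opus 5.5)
Apply Lemma~\ref{LKI} with $r=0$, $f=0$ to a nontrivial solution $U=(\rho,u)^T\in X_1$ of \eqref{ER0}. This gives
\[
	\xi a + \bar{\xi} b + c = 0, \qquad
	a := \int_\Omega \bigl(B(\rho^e)\bar{\rho}\bigr)\rho\,dx,\quad
	b := \int_\Omega \rho^e|u|^2\,dx,\quad
	c := \int_\Omega u\cdot L_0\bar{u}\,dx .
\]
All three quantities are real and nonnegative. For $a$ this is Lemma~\ref{LSt}; the integration-by-parts form shows $a$ is real even for complex $\rho$, since $\Psi_{\rho\rho}(\rho^e)$ is real. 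For $b$ it holds because $\rho^e>0$. For $c$, Lemma~\ref{LLame} applied to real and imaginary parts separately gives $c\ge\delta\|\nabla u\|_{L^2}^2\ge0$, using $\mu>0$, $2\mu+\lambda>0$. The proof then reduces to elementary algebra on this identity, combined with the structure of \eqref{ERD}--\eqref{ERV}, to rule out degenerate cases.

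\textbf{Step 1 (sign of $\operatorname{Re}\xi$).} Write $\xi=s+it$. The real part of the identity is $s(a+b)+c=0$, and the imaginary part is $t(a-b)=0$. If $a+b>0$, then $s=-c/(a+b)\le0$. If $a+b=0$, then $b=0$ forces $u=0$. Equation \eqref{ERD} then gives $\xi\rho=0$, and \eqref{ERV} gives $\nabla(B(\rho^e)\rho)=0$, so $B(\rho^e)\rho$ is constant. Since $B(\rho^e)\rho$ is average-free, $B(\rho^e)\rho=0$. Now $U\ne0$ forces $\rho\ne0$, and then $\xi\rho=0$ gives $\xi=0$, so $\operatorname{Re}\xi\le0$ holds in every case.

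\textbf{Step 2 ($\operatorname{Re}\xi=0$ implies $\xi=0$).} Suppose $s=0$. The real part gives $c=0$, so by Lemma~\ref{LLame} $u$ is constant. In a bounded domain this means $u=0$.

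\begin{itemize}
\item \emph{Bounded domain.} With $u=0$, exactly as above, \eqref{ERD} gives $\xi\rho=0$ and \eqref{ERV} gives $B(\rho^e)\rho=0$. Since $U\ne0$ we have $\rho\ne0$, hence $\xi=0$.
\item \emph{Torus.} Here $u\equiv c_0$ is constant, so \eqref{ERD} reads $\xi\rho+c_0\cdot\nabla\rho^e=0$, and \eqref{ERV} reads $\xi c_0+\nabla(B(\rho^e)\rho)=0$. Integrate the second equation over $\mathbb{T}^n$; the gradient term integrates to zero, so $\xi c_0|\Omega|=0$. If $\xi\neq0$, this forces $c_0=0$, hence $\rho=0$ by \eqref{ERD}, so $U=0$, a contradiction. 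Therefore $\xi=0$.
\end{itemize}

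An alternative route for the case $\xi=it$ with $t\neq0$ uses the imaginary part $t(a-b)=0$ to get $a=b$, and then tests \eqref{ERV} against $\rho^e\bar u$ when $u$ is constant. The direct integration argument above is cleaner.

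\textbf{Main obstacle.} The delicate point is the periodic case with constant $u$, because Lemma~\ref{LLame} then yields only a constant rather than zero. The integration of \eqref{ERV} over the torus resolves this, using that $\nabla(\cdot)$ of a periodic function has zero mean. A secondary concern is justifying that all integration by parts is valid for $U\in X_1$ with the boundary conditions. This is routine, since it already underlies Lemma~\ref{LKI}.
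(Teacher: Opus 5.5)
Your proof is correct. The sign argument and the bounded-domain case of $\operatorname{Re}\xi=0$ follow the paper's argument. Both use the real part of \eqref{EKI0}, the nonnegativity of the three integrals, and the chain $b=0\Rightarrow u=0\Rightarrow\xi\rho=0$; you only organize it as a case split on $a+b$.

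The periodic case is where you take a different route. After obtaining $u\equiv c$, the paper uses the translation structure from the proof of Lemma~\ref{LSp0}\,(ii), namely $c\cdot\nabla\rho^e\in\ker B(\rho^e)$. For $\xi=it$ with $t\neq0$, \eqref{ERD} then gives $\rho=-c\cdot\nabla\rho^e/\xi\in\ker B(\rho^e)$. Hence $a=0$, and the identity collapses to $\bar\xi\int_\Omega\rho^e|u|^2\,dx=0$, which forces $u=0$ and then $\rho=0$.

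You instead integrate \eqref{ERV} over $\mathbb{T}^n$. Since $L_0c_0=0$ and the gradient of a periodic function has zero mean, you get $\xi c_0=0$ directly. Your route is shorter and self-contained: it uses neither the equilibrium equation for $\rho^e$ nor the kernel computation of Lemma~\ref{LSp0}. The paper's route stays inside the energy identity and ties the constant-velocity modes to the translation directions of $\rho^e$. That link is the structure the paper relies on later when describing $\ker A_0$ on the torus.
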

\begin{proof}
If $U$ solves \eqref{ER0}, Lemma \ref{LKI} yields
\begin{equation} \label{EKI0}
	\xi \int_\Omega  \left( B(\rho^e) \bar{\rho} \right) \rho \,dx
    + \bar{\xi} \int_\Omega \rho^e |u|^2\, dx 
    + \int_\Omega u \cdot L_0 \bar{u}, dx=0.
\end{equation}
By Lemma \ref{LLame}, we see that
\[
    \int_\Omega u\cdot L_0\bar{u}\,dx
    = \overline{\int_\Omega \bar{u}\cdot L_0 u\,dx} \ge0.
\]
 By Lemma \ref{LSt}, we see $\int \left(B(\rho^e)\bar{\rho}\right)\rho\,dx\ge0$.
 Thus if $\operatorname{Re}\xi>0$, then $u\equiv0$ by \eqref{EKI0}.
 By \eqref{ERD} with $r=0$, this implies that $\rho\equiv0$.
 Thus $\operatorname{Re}\xi\le0$.

If $\operatorname{Re}\xi=0$, then \eqref{EKI0} implies that $\int_\Omega\bar{u}\cdot L_0u\,dx=0$.
 By Lemma \ref{LLame}, the last identity implies $\nabla u\equiv0$.
 If $\partial\Omega\neq\emptyset$, then by the zero boundary condition, we see that $u\equiv0$.
 By \eqref{ERD}, we observe that $\rho\equiv0$.
 We thus conclude that the imaginary part $\operatorname{Im}\xi=0$, i.e., $\xi=0$.
 The proof of the case $\Omega=\mathbb{T}^n$ is postponed after the proof of next lemma.
\end{proof}
\begin{lem} \label{LSp0}
Assume that $\mu>0$ and $2\mu+\lambda>0$ and that $\rho^e\in\mathcal{E}$ so that $(\rho^e,0)$ is an equilibrium of \eqref{ECM}, \eqref{ECMM2} with \eqref{EB}.
\begin{enumerate}
\item [(i)] If $\partial\Omega\neq\emptyset$, then
\begin{equation} \label{Eker}    
	\ker A_0
	= \left\{(\rho,0) \bigm|
	\rho \in \ker B(\rho^e)\right\}.
\end{equation}
In particular if $\rho^e$ is non-degenerate, then
\[
    m=\dim\ker B(\rho^e),
\]
where $m$ is the dimension of $\mathcal{E}$ near $\rho^e$ as a manifold and $\dim$ denotes the dimension of a vector space.
 If $\rho^e$ is isolated, then, $\ker B(\rho^e)=\{0\}$.
\item [(i\hspace{-0.1em}i)] If $\Omega=\mathbb{T}^n$, then
\begin{equation} \label{EkerP}    
	\ker A_0
	= \left\{(\rho,c) \bigm|
	\rho \in \ker B(\rho^e),\ 
    c\cdot\nabla\rho^e \equiv0, \ 
    c\in\mathbb{R}^n
    \right\}.
\end{equation}
In particular, if $\rho^e$ has no constant direction, then the identity \eqref{Eker} holds.
 If $\rho^e$ is non-degenerate, then
\[
    m=\dim\ker B(\rho^e)+k,
\]
where
\[
    k=\dim \left\{ c\in\mathbb{R}^n \mid
    c\cdot\nabla\rho^e\equiv0\ \text{in}\ \mathbb{T}^n \right\}.
\]
Moreover, $m\ge n-k$.
 In particular, $m\ge1$ unless $\rho^e$ is a constant.
\end{enumerate}
\end{lem}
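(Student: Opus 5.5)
The plan is to compute $\ker A_0$ directly by applying the key identity of Lemma~\ref{LKI} with $\xi=0$ and $G=0$. If $A_0U=0$ with $U=(\rho,u)^T\in X_1$, then \eqref{EKI} reduces to $\int_\Omega u\cdot L_0\bar{u}\,dx=0$. By Lemma~\ref{LLame}, applied to real and imaginary parts separately since the form is real and symmetric, $u$ is constant.

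For the bounded-domain case (i), the boundary condition then gives $u=0$. The second equation of $A_0U=0$ becomes $\nabla(B(\rho^e)\rho)=0$. Since $B(\rho^e)\rho$ lies in $L^2_\mathrm{av}$ (it carries the projection $P_1$), being constant forces $B(\rho^e)\rho=0$. Conversely, any $(\rho,0)$ with $\rho\in\ker B(\rho^e)$ satisfies both equations trivially: the first is $\operatorname{div}(\rho^e\cdot 0)=0$. This gives \eqref{Eker}. The statement $m=\dim\ker B(\rho^e)$ is simply the definition of non-degeneracy. If $\rho^e$ is isolated, then $m=0$, so $\ker B(\rho^e)=\{0\}$.

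For the torus case (ii), $u=c$ is a constant vector, and the first equation reads $\operatorname{div}(\rho^e c)=c\cdot\nabla\rho^e=0$. The second equation reads $L_0c/\rho^e+\nabla(B(\rho^e)\rho)=\nabla(B(\rho^e)\rho)=0$, so again $B(\rho^e)\rho=0$. A complex $c$ with $c\cdot\nabla\rho^e=0$ has real and imaginary parts satisfying the same relation, since $\rho^e$ is real; so one can describe the kernel over $\mathbb{R}^n$ after complexification, and the converse inclusion is immediate. This gives \eqref{EkerP}. With no constant direction, $c=0$ and \eqref{Eker} follows. The kernel is then a direct sum $\ker B(\rho^e)\oplus\{c:c\cdot\nabla\rho^e=0\}$. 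However, non-degeneracy is defined by $\dim\ker B(\rho^e)=m$, so the claim $m=\dim\ker B(\rho^e)+k$ must be reconciled with this definition. I would read it as: $m$ here denotes the dimension of the equilibrium manifold in the phase space of pairs $(\rho,u)$, which in the periodic case includes the constant velocities $c$ along constant directions (Galilean frames). I would state this interpretation explicitly.

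For $m\ge n-k$: the map $c\mapsto c\cdot\nabla\rho^e$ from $\mathbb{R}^n$ to $\ker B(\rho^e)$ (by the translation computation in the introduction) has kernel of dimension $k$. Its image therefore has dimension $n-k$, which gives $\dim\ker B(\rho^e)\ge n-k$, and hence $m\ge n-k$. If $\rho^e$ is not constant, then $\nabla\rho^e\not\equiv0$, so $k<n$ and $m\ge1$. The main obstacle is the bookkeeping of what $m$ means in the periodic case; the analytic steps are routine given Lemmas~\ref{LKI} and~\ref{LLame}.
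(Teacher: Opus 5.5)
Your proposal is correct and follows essentially the same route as the paper. In both, \eqref{EKI} with $\xi=0$ together with Lemma~\ref{LLame} forces $u$ to be constant (so $u=0$ when $\partial\Omega\neq\emptyset$), the remaining equations reduce to $\nabla(B(\rho^e)\rho)=0$ and $c\cdot\nabla\rho^e=0$, and $m\ge n-k$ comes from $c\cdot\nabla\rho^e\in\ker B(\rho^e)$ via the translation argument. Your concern about the meaning of $m$ in the periodic case is justified: the paper's own proof of $m\ge n-k$ uses $m=\dim\ker B(\rho^e)$, which contradicts $m=\dim\ker B(\rho^e)+k$ unless $k=0$, although $m\ge n-k$ holds under either reading.
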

\begin{proof}
\begin{enumerate}
\item [(i)] By \eqref{EKI0} with $\xi=0$, Lemma \ref{LLame} implies that $\nabla u\equiv0$ so that $u\equiv0$ by the Dirichlet boundary condition for $u$.
 Thus, $A_0U=0$ yields
\[
	u = 0 \quad\text{and}\quad
	\nabla \left( B(\rho^e)\rho \right) = 0
	\quad\text{in}\quad \Omega.
\]
By the definition of $B(\rho^e)$, $\nabla\left(B(\rho^e)\rho\right)=0$ is equivalent to
\[
    \rho\in\ker B(\rho^e).
\]
We thus conclude \eqref{Eker}.
 The remaining assertions are clear by definition and \eqref{Eker}.
\item [(i\hspace{-0.1em}i)] In the case of $\Omega=\mathbb{T}^n$, if we use \eqref{EKI0} with $\xi=0$, Lemma~\ref{LLame} implies that $u=c$, where $c$ is a constant vector.
 Thus $A_0U=0$ yields
\[
    u=c, \quad
    \nabla \left(B(\rho^e)\rho\right)=0
    \quad\text{and}\quad
    \operatorname{div}(\rho^e c)=0.
\]
Since $\operatorname{div}(\rho^e c)=c\cdot\nabla\rho^e$, we now obtain \eqref{EkerP}.
The remaining assertions follow from definition and \eqref{EkerP} except $m\ge n-k$.

Let us give a proof for $m\ge n-k$.
 Since $\rho^e\in\mathcal{E}$, a translated function
\[
    \rho_c^e(x):=\rho^e(x+c)
\]
satisfies
\[
    \Psi_\rho(\rho_c^e)- \varepsilon\Delta\rho_c^e=a
\]
with a constant $a$ independent of $c$.
 We set $c=c_0s$ for $s\in\mathbb{R}$ and differentiate the above equation with respect to $s$ to get
\[
    c_0\cdot\nabla \left(\Psi_\rho(\rho^e)-\varepsilon\Delta\rho^e \right)=0
\]
by setting $s=0$.
 This implies
\[
    \left(\varepsilon B_0+\Psi_{\rho\rho}(\rho^e)\right)
    (c_0\cdot\nabla\rho^e)=0,
\]
so that
\[
    c_0\cdot\nabla\rho^e \in \ker B(\rho^e).
\]
Thus,
\[
    \dim\ker B(\rho^e)
    \ge n-k.
\]
By definition of non-degeneracy, $m=\dim\ker B(\rho^e)$ so we conclude that $m\ge n-k$.
 If $n=k$, then $\rho^e$ must be a constant so $n-k\ge1$ if $\rho^e$ is not a constant.
\end{enumerate}
\end{proof}
\noindent
\begin{proof}[Proof of Theorem~\ref{TSp} in the case of $\Omega=\mathbb{T}^n$]
 If $\operatorname{Re}\xi=0$, as in the proof for the case $\partial\Omega\neq\emptyset$, we conclude that $u$ is a constant vector $c$.
 From the proof of Lemma~\ref{LSp0} (i\hspace{-0.1em}i), we know that $c\cdot\nabla\rho^e\in\ker B(\rho^e)$.
 By \eqref{ERD}, $-\rho=c\cdot\nabla\rho^e/\xi$.
 If $\operatorname{Im}\xi\neq0$ and $\operatorname{Re}\xi=0$, then $B(\rho^e)\rho=0$.
 The identity \eqref{EKI0} now implies that $\bar{\xi}\int_\Omega\rho^e|u|^2\,dx=0$, which implies $u=0$.
 By \eqref{ERD}, we see that $\rho=0$.
 We now conclude that $(\xi_0+A_0)U=0$ for $\operatorname{Re}\xi=0$, $\operatorname{Im}\xi\neq0$ implies $U\equiv0$.
 The proof for periodic case is now complete.
\end{proof}

\subsection{Maximum regularity} \label{SSS}

Instead of considering $A_0$, we consider the operator $A_0$ with no $\Psi$ and write it by $A_{00}$.
 By definition, $A_{00}=A(0)$ and its explicit form is
\[
	A_{00} U =
	\dbinom{\operatorname{div}(\rho^e u)}
	{\nabla (\varepsilon B_0 \rho)}
	+ \dbinom{0}{L_0 u/\rho^e}
	\quad\text{for}\quad
	U = \dbinom{\rho}{u}.
\]

The maximum regularity result we need is a special case of \cite[Theorem~2.1]{Ko} when $\Omega$ is a bounded domain.
\begin{lem} \label{LMax}
Assume that $\mu>0$ and $2\mu+\lambda>0$.
 Assume that $0<T<\infty$.
 A linear equation
\[
    \frac{dU}{dt}+A_{00}U=G
    \quad\text{in}\quad (0,T)
    \quad\text{with}\quad U(0)=U_0\in X_{1/2}
\]
is uniquely solvable for $G\in L^2(0,T;X_0)$ and
\[
    \int_0^T \left(\left\|\frac{dU}{dt}\right\|_{X_0}^2 + \|U\|_{X_1}^2\right)\,dt
    \le C_T \left(\int_0^T \|G\|_{X_0}^2\,dt
     + \|U_0\|_{X_{1/2}}^2\right)
\]
with some constant $C_T$ independent of $G$ and $U_0$.
\end{lem}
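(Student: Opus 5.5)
The plan is to treat $A_{00}$ as a perturbation of a decoupled, triangular operator and prove maximal $L^2$-regularity directly in the Hilbert space $X_0$. First I will show that $A_{00}$, with domain $X_1$, is sectorial of angle less than $\pi/2$ after a shift. In a Hilbert space this gives maximal $L^2$-regularity with initial data in the trace space $(X_0,X_1)_{1/2,2}=X_{1/2}$; see e.g.\ \cite[Proposition~2.3]{Sa}. Since $T<\infty$, a shift $e^{-\omega t}$ is harmless and only changes $C_T$. For a bounded domain one may instead quote \cite[Theorem~2.1]{Ko} with $p=2$, constant $\varepsilon,\mu,\lambda$ and the coefficient $1/\rho^e\in C^3$. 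The task is then to check that their solution class coincides with $X_1$ and that their trace space coincides with $X_{1/2}$.

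To prove sectoriality myself, I would reduce the system to a single equation for $u$. From the first component of $(\xi+A_{00})U=(r,f)$ we get $\rho=\xi^{-1}(r-\operatorname{div}(\rho^e u))$. Substituting into the second component gives
\[
\xi u+\frac{L_0u}{\rho^e}-\frac{\varepsilon}{\xi}\nabla B_0\operatorname{div}(\rho^e u)=f-\frac{\varepsilon}{\xi}\nabla B_0 r .
\]
The resulting operator is of mixed order in $u$: the first term has order $0$, the second order $2$, and the third order $3$ with a $1/\xi$ factor. The natural device is the energy identity of Lemma~\ref{LKI}, with the inner product of $X_0$ weighted by $\rho_*=\rho^e$. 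That identity cancels the coupling terms exactly, up to the commutator of $\nabla$ with $\rho^e$. So, after a bounded perturbation, $\operatorname{Re}(A_{00}U,U)\ge -\omega\|U\|_{X_0}^2+\delta\|\nabla u\|_{L^2}^2$. This yields accretivity of $A_{00}+\omega$, hence an estimate on the resolvent for $\operatorname{Re}\xi>\omega$. Combined with Fredholm and compactness arguments ($X_1\hookrightarrow X_0$ is compact), it also gives invertibility of $\xi+A_{00}$ there.

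Accretivity alone, however, only gives a half-plane and not an angle less than $\pi/2$. For a sector one needs $|\operatorname{Im}(A_{00}U,U)|\le C\operatorname{Re}((A_{00}+\omega)U,U)$, and this fails because the $\rho$-part is skew. This is the main obstacle, already flagged in the introduction. To overcome it, I would first localize and freeze coefficients, reducing to the constant-coefficient problem in $\mathbb{R}^n$ and in the half space. There I would apply the Fourier transform and split $u$ into its gradient part and its divergence-free part. The divergence-free part obeys a heat-type equation. The gradient part couples with $\rho$ through the $2\times2$ symbol
\[
\begin{pmatrix}\xi & i\rho^e|k|\\ i\varepsilon|k|^3 & \xi+(2\mu+\lambda)|k|^2/\rho^e\end{pmatrix}.
\]
Its determinant has roots with real part comparable to $-|k|^2$, so in the weighted norm $\bigl(|k|\,|\hat\rho|,|\hat u|\bigr)$ one obtains sectorial resolvent bounds uniform in $k$. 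This is where the parabolicity $\varepsilon>0$, $2\mu+\lambda>0$ enters.

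The half-space case needs the Lopatinskii condition for the boundary conditions $\partial_\nu\rho=0$ and $u=0$. I would either verify it by solving the ODE in the normal variable or simply invoke \cite{Ko} for this step. Patching the local estimates with a partition of unity costs only lower-order terms. These are absorbed for large $\omega$ using the interpolation inequality between $X_1$ and $X_0$.

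The periodic case is simpler: only the Fourier step is needed, applied to each Fourier mode directly.

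Uniqueness follows from the a priori estimate. Solvability on $(0,T)$ then comes from generation of an analytic semigroup together with maximal $L^2$-regularity of sectorial operators in Hilbert spaces (de Simon's theorem).
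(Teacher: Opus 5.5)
Your route is correct in outline but differs from the paper's. For a bounded domain the paper gives no independent proof. It quotes \cite[Theorem~2.1]{Ko} with $p=2$, and remarks only that Kotschote's $\rho^e\operatorname{div}u$ differs from $\operatorname{div}(\rho^e u)$ by a lower-order term harmless for $T<\infty$. It also notes that his proof works with the new unknowns $w=\operatorname{div}u$, $\tau=\Delta\rho$, decoupled by a suitable linear combination, and calls the torus simpler. Only for $n=1$ does the paper argue as you do: it builds the resolvent via $w=\rho_x$ and the matrix $K$, then passes from sectoriality to maximal regularity by \cite[Proposition~2.3]{Sa}.

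Your plan is the $n$-dimensional version of that 1D argument: sectoriality by freezing coefficients and localizing, then de Simon. Your longitudinal $2\times 2$ block is the Fourier-side counterpart of Kotschote's $(w,\tau)$ system. Its determinant vanishes exactly at $\xi=-|k|^2\lambda_\pm$, with $\lambda_\pm$ the eigenvalues of $K$. In the variables $(|k|\hat\rho,\,k\cdot\hat u/|k|)$ it reads $\xi+|k|^2M$ with $\sigma(M)\subset\Sigma_{\pi/2-\delta_0}$, so the uniform sectorial bound you claim is right.

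The citation buys $L^p$ theory, variable coefficients and inhomogeneous boundary data for free. The price is matching Kotschote's solution class, trace space and coupling term with $X_1$, $X_{1/2}$ and $\operatorname{div}(\rho^e u)$. Your route stays in the Hilbert setting and makes the $H^1\times L^2$ sectorial structure explicit. However, the one genuinely hard step, the Lopatinskii--Shapiro condition for $\partial_\nu\rho=0$, $u=0$ in the half-space, is exactly what you defer to \cite{Ko}. So for bounded domains you depend on Kotschote just as much as the paper does.

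There are two small corrections.
\begin{itemize}
\item The exact cancellation in Lemma~\ref{LKI} comes from pairing with $B\bar\rho$, i.e.\ with the $\rho$-part weighted by $\varepsilon$. With the plain $X_0$ inner product, a term $(1-\varepsilon)\operatorname{Re}\int_\Omega\operatorname{div}(\rho^e u)\,\overline{B_0\rho}\,dx$ survives, and it is not controlled by $\|\nabla u\|_{L^2}^2+\|U\|_{X_0}^2$. With the $\varepsilon$-weight there is no commutator remainder at all.
\item On $\mathbb{T}^n$ you cannot treat Fourier modes separately, because the non-constant coefficient $\rho^e$ couples them. You still need to freeze coefficients and localize, just without boundary charts.
\end{itemize}
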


In the case of bounded domain, \cite[Theorem~2.1]{Ko} gives a more general estimate replacing $2$ by $p$ and $X_0$ by its $L^p$ version for all $p\in(1,\infty)$ except $p=3/2$.
 In his setting, inhomogeneous boundary data is allowed and $\lambda$, $\mu$, $\varepsilon$ are allowed to depend on $x$ and $t$.
 The main idea in his proof is introducing new dependent variables
\[
    w=\operatorname{div}u,\quad
    \tau=\Delta\rho
\]
and diagonalized the operator by considering a suitable linear combination of $w$ and $\tau$.
 Note that in his case $\operatorname{div}(\rho^eu)$ is replaced by $\rho^e\operatorname{div}u$ but this is a lower order perturbation so it does not affect the maximum regularity estimate for $T<\infty$.
 The case $\Omega=\mathbb{T}^n$ is much simpler.
 The maximum regularity implies the sectoriality with angle less than $\pi/2$ \cite[Proposition~3.5.2]{PS}.
\begin{cor} \label{CMax}
Under the same assumptions of Lemma~\ref{LMax} there exist positive constants $\delta$ and $R$ such that $(\zeta+A_{00})^{-1}\in\mathcal{L}(X_0,X_1)$ with
\[
    |\zeta|\left\|(\zeta+A_{00})^{-1}U\right\|_{X_0}
    \le C\|U\|_{X_0}
\]
for $\zeta\in\Sigma_{\pi/2+\delta}\cap(B_R)^C$ with $C=C_{\delta,R}$ independent of $\zeta$ and $U$.
 Here $\Sigma_\sigma=\left\{z\in\mathbb{C}\setminus\{0\} \bigm||\arg z|<\sigma\right\}$ and $B_R=\left\{z\in\mathbb{C}\bigm||z|<R\right\}$.
\end{cor}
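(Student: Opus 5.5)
The plan is to deduce the resolvent estimate from the maximal $L^2$-regularity of Lemma~\ref{LMax} by the standard argument behind \cite[Proposition~3.5.2]{PS}. In our Hilbert space setting it can be done directly with the Laplace transform. The idea is that maximal regularity on a finite interval $(0,T)$ gives maximal regularity on $(0,\infty)$ for the shifted operator $\omega+A_{00}$ with $\omega$ large. That in turn yields a bounded inverse of $\zeta+A_{00}$ on a half plane $\operatorname{Re}\zeta\ge\omega$, with $|\zeta|$-uniform bounds. A perturbation argument then opens the sector slightly beyond $\pi/2$.

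The first step is to show that $-A_{00}$ generates an analytic (or at least $C_0$) semigroup $e^{-tA_{00}}$ on $X_0$ with $\|e^{-tA_{00}}\|\le Me^{\omega_0 t}$. Strong continuity and the exponential bound come from uniqueness and the estimate of Lemma~\ref{LMax} with $G=0$ on unit intervals, iterated in time using the trace embedding $X_1\cap H^1(X_0)\hookrightarrow C(X_{1/2})$. The second step is to fix $\omega>\omega_0$ and consider $V=e^{-\omega t}U$, which solves $V'+(\omega+A_{00})V=e^{-\omega t}G$. Splitting $(0,\infty)$ into unit intervals and summing the estimates of Lemma~\ref{LMax} on each, with the exponential decay controlling the initial values $V(k)\in X_{1/2}$, gives maximal regularity on the half line:
\[
\|V'\|_{L^2(0,\infty;X_0)}+\|V\|_{L^2(0,\infty;X_1)}\le C\|G\|_{L^2(0,\infty;X_0)} \quad\text{for } V(0)=0 .
\]
The third step is to take $G(t)=e^{i\tau t}g$ for $t\in(0,T)$, extend by zero, and apply the Fourier--Plancherel transform, which is legitimate since $X_0$ is a Hilbert space. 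Plancherel turns the half-line estimate into
\[
|\omega+i\tau|\,\|(\omega+i\tau+A_{00})^{-1}g\|_{X_0}+\|(\omega+i\tau+A_{00})^{-1}g\|_{X_1}\le C\|g\|_{X_0}
\]
uniformly in $\tau\in\mathbb{R}$. Equivalently, one tests with $G=e^{\zeta t}g$ and uses $\hat V(\zeta)=(\zeta+\omega+A_{00})^{-1}\hat G$. Existence of the resolvent for $\operatorname{Re}\zeta>\omega$ follows from the Laplace transform of the semigroup.

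The fourth step is to extend the bound $|\zeta|\,\|(\zeta+A_{00})^{-1}\|\le C$ from the line $\operatorname{Re}\zeta=\omega$ to the closed half plane $\operatorname{Re}\zeta\ge\omega$, using the Phragm\'en--Lindel\"of principle or the Hille--Yosida type bound already available there. A Neumann series then extends it to a sector. If $\zeta_0=\omega+i\tau$ and $|\zeta-\zeta_0|\le|\zeta_0|/(2C)$, then $\zeta+A_{00}=(\zeta_0+A_{00})\bigl(I+(\zeta-\zeta_0)(\zeta_0+A_{00})^{-1}\bigr)$ is invertible, and the bound persists with constant $2C$. These discs cover the set $\Sigma_{\pi/2+\delta}\cap\{\operatorname{Re}\zeta<\omega\}$ with $\delta\sim\arctan(1/(2C))$, once $|\zeta|\ge R$ with $R$ chosen large in terms of $\omega$. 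Membership in $\mathcal{L}(X_0,X_1)$ follows from the graph-norm equivalence $\|U\|_{X_1}\le C(\|A_{00}U\|_{X_0}+\|U\|_{X_0})$, which is itself read off from Lemma~\ref{LMax}.

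I expect the main obstacle to be the passage from the finite-interval estimate with initial data to a uniform half-line estimate. This requires the exponential weight and control of the accumulated constants $C_T$. I would handle it by proving it on the unit interval only and iterating with the semigroup bound. Everything after that is routine Hilbert-space Fourier analysis.
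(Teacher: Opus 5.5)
The paper gives no separate argument for this corollary. It simply invokes \cite[Proposition~3.5.2]{PS} (maximal regularity implies sectoriality with angle less than $\pi/2$), so your plan of unpacking that implication follows the same route. The unpacking has one genuine flaw, in Step~1.

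Lemma~\ref{LMax} only admits initial data $U_0\in X_{1/2}$ and only controls $\|U_0\|_{X_{1/2}}$. Iterating it with $G=0$ and the trace embedding therefore gives a $C_0$-semigroup $S(t)$ on $X_{1/2}$ with $\|S(t)\|_{\mathcal{L}(X_{1/2})}\le Me^{\omega_0 t}$. It says nothing about $\|S(t)U_0\|_{X_0}$ in terms of $\|U_0\|_{X_0}$, and an $X_0$-bound of that kind is essentially what the corollary asserts. Consequently, the two later steps that rely on a semigroup on $X_0$ are unsupported:
the Laplace transform in Step~3 only produces $(\zeta+A_{00})^{-1}$ on $X_{1/2}$;
the Hille--Yosida bound in $\mathcal{L}(X_0)$ invoked in Step~4 is not available, and neither is the a priori growth bound that Phragm\'en--Lindel\"of would need.

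Your Step~2 is unaffected, since it only uses the $X_{1/2}$-semigroup, which is in fact how you use it. The rest can be repaired without Plancherel or Phragm\'en--Lindel\"of. Let $B=\omega+A_{00}$, and let $C$ be the constant in your half-line estimate. For $\operatorname{Re}\mu>0$ and $g\in X_0$:
take $G(t)=e^{-\bar\mu t}g$, so that $\|G\|_{L^2(0,\infty;X_0)}=(2\operatorname{Re}\mu)^{-1/2}\|g\|_{X_0}$;
let $V$ be the half-line solution with $V(0)=0$, and set $W=2\operatorname{Re}\mu\int_0^\infty e^{-\mu t}V(t)\,dt$;
since $V(0)=0$, integration by parts gives $\mu W=2\operatorname{Re}\mu\int_0^\infty e^{-\mu t}V'(t)\,dt$ and $(\mu+B)W=g$;
Cauchy--Schwarz, with $\|e^{-\mu\cdot}\|_{L^2(0,\infty)}=(2\operatorname{Re}\mu)^{-1/2}$, then yields $\|W\|_{X_1}+|\mu|\,\|W\|_{X_0}\le 2C\|g\|_{X_0}$, uniformly on the whole half plane.

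Injectivity of $\mu+B$ on $X_1$ follows from the $X_{1/2}$-semigroup. If $(\mu+B)W=0$, then $e^{\mu t}W$ solves the homogeneous problem with initial value $W\in X_{1/2}$. By the uniqueness in Lemma~\ref{LMax} it equals $e^{-\omega t}S(t)W$, which stays bounded in $X_{1/2}$. But $e^{\mu t}W$ grows there unless $W=0$.

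With this in place, your Neumann-series argument extending the estimate into $\Sigma_{\pi/2+\delta}\cap(B_R)^C$ goes through as you wrote it, and so does the $\mathcal{L}(X_0,X_1)$ bound.
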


Corollary~\ref{CMax} yields a result for $A_0$ since $A_{00}-A_0$ can be regarded as a lower order perturbation provided that $\Psi\in C^3$ and $\rho^e\in C^3(\bar{\Omega})$.
\begin{thm} \label{TRes}
Let $\Omega$ be a bounded domain or $\Omega=\mathbb{T}^n$.
Assume that $\mu>0$ and $\mu+2\lambda>0$.
Assume that $\Psi\in C^3$ and $\rho^e\in C^3(\bar{\Omega})$ with $\rho^e>0$.
 Then there exist positive constants $\delta$ and $R$ such that $(\zeta+A_0)^{-1}\in\mathcal{L}(X_0,X_1)$ and satisfies
\[
    |\zeta| \left\|(\zeta+A_0)^{-1}U\right\|_{X_0}
    \le C \|U\|_{X_0}
\]
for all $\zeta\in\Sigma_{\pi/2+\delta}\cap(B_R)^C$ with $C$ independent of $\zeta$ and $U$.
\end{thm}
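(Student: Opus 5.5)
The plan is a perturbation argument starting from Corollary~\ref{CMax}. Write
\[
    A_0 = A_{00} + K, \qquad
    K U := \dbinom{0}{\nabla\bigl(B(\rho^e)\rho - \varepsilon B_0\rho\bigr)}
    = \dbinom{0}{\nabla P_1\bigl(\Psi_{\rho\rho}(\rho^e)\rho\bigr)}
    = \dbinom{0}{\nabla\bigl(\Psi_{\rho\rho}(\rho^e)\rho\bigr)},
\]
where the projection $P_1$ disappears under the gradient. For $\zeta$ in the sector we factor
\[
    \zeta + A_0 = \bigl(I + K(\zeta+A_{00})^{-1}\bigr)(\zeta+A_{00}).
\]
It then suffices to show $\|K(\zeta+A_{00})^{-1}\|_{\mathcal{L}(X_0)}\le 1/2$ for $|\zeta|\ge R'$ with some $R'\ge R$. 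Granting this, a Neumann series gives $(\zeta+A_0)^{-1}=(\zeta+A_{00})^{-1}(I+K(\zeta+A_{00})^{-1})^{-1}\in\mathcal{L}(X_0,X_1)$, and
\[
    |\zeta|\,\|(\zeta+A_0)^{-1}U\|_{X_0}\le 2C\|U\|_{X_0}.
\]

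First, the mapping property of $K$. Since $\Psi\in C^3$ and $\rho^e\in C^3(\bar\Omega)$, the coefficient $\Psi_{\rho\rho}(\rho^e)$ is $C^1(\bar\Omega)$. Hence
\[
    \|\nabla(\Psi_{\rho\rho}(\rho^e)\rho)\|_{L^2}\le C\|\rho\|_{H^1}.
\]
So $K$ is bounded from $H^1_{\mathrm{av}}\times L^2 = X_0$ into $X_0$: it involves only the first component, and takes $\rho\in H^1$ to an $L^2$ velocity. Boundedness alone does not make the composition small, though, so we need decay of $(\zeta+A_{00})^{-1}$ as a map into a space where $K$ acts.

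To get that decay we use interpolation. The main obstacle is that $K$ is of ``order zero'' relative to $X_0$ in the $\rho$-component, so we need a gain from the resolvent. Writing $V=(\zeta+A_{00})^{-1}U=(\rho,u)$, Corollary~\ref{CMax} gives $\|V\|_{X_0}\le C|\zeta|^{-1}\|U\|_{X_0}$. The identity $A_{00}V=U-\zeta V$ then gives $\|A_{00}V\|_{X_0}\le C\|U\|_{X_0}$, and the maximal regularity of Lemma~\ref{LMax}, i.e.\ closedness of $A_{00}$ with domain $X_1$, upgrades this to $\|V\|_{X_1}\le C\|U\|_{X_0}$.

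The $\rho$-component of $X_1$ is $H^3$ and that of $X_0$ is $H^1$. By the interpolation inequality $\|\rho\|_{H^{1+s}}\le C\|\rho\|_{H^1}^{1-s/2}\|\rho\|_{H^3}^{s/2}$, we get $\|\rho\|_{H^{1+s}}\le C|\zeta|^{-(1-s/2)}\|U\|_{X_0}$. In particular $\|\rho\|_{H^1}\le C|\zeta|^{-1}\|U\|_{X_0}$ directly, so
\[
    \|KV\|_{X_0}\le C\|\rho\|_{H^1}\le C'|\zeta|^{-1}\|U\|_{X_0},
\]
which is below $1/2$ for $|\zeta|$ large. Here the $X_0$-norm equivalence with weight $\rho^e$ is harmless since $\rho^e$ is bounded above and below. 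Thus the feared obstacle dissolves: the first component of $X_0$ is already $H^1$, so $K$ is genuinely bounded on $X_0$.

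Finally, since $K\in\mathcal{L}(X_0)$, we simply enlarge $R$ to $R'=\max(R,2CC')$, keeping the same $\delta$. The periodic case is identical, since Corollary~\ref{CMax} covers it. The constant $C$ stays independent of $\zeta$ and $U$, which proves Theorem~\ref{TRes}.
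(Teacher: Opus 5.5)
Your argument is correct and matches the paper's: the same factorization $(\zeta+A_0)^{-1}=(\zeta+A_{00})^{-1}\bigl(I+K(\zeta+A_{00})^{-1}\bigr)^{-1}$ with $K=A_0-A_{00}$ (the paper's $R$), and a Neumann series for $\zeta\in\Sigma_{\pi/2+\delta}$ with $|\zeta|$ large, built on Corollary~\ref{CMax}. The one refinement is that you notice $KU=\bigl(0,\nabla(\Psi_{\rho\rho}(\rho^e)\rho)\bigr)^T$ is bounded on $X_0$, because the $\rho$-component of $X_0$ is already $H^1$; the paper instead treats $K$ as a relatively bounded term, $\|KU\|_{X_0}\le\delta\|U\|_{X_1}+C_\delta\|U\|_{X_0}$, combined with $\|U\|_{X_1}\le C_3\|A_{00}U\|_{X_0}$, which uses invertibility of $A_{00}$ and is proved only for bounded domains, whereas you get $\|K(\zeta+A_{00})^{-1}\|_{\mathcal{L}(X_0)}\le C/|\zeta|$ at once, so your version is simpler, covers $\mathbb{T}^n$ with no extra work, and the interpolation and $X_1$-bound detour in your middle paragraph can be dropped.
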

\begin{rem} \label{RDK}
In \cite[Theorem~2.2]{Ko}, the maximal regularity in $\mathbb{R}^n$ is proved for all $p\in(1.\infty)$.
 According to general theory of a parabolic mixed order system based on Newton polygons, the operator $A_{00}$ with constant coefficients has the $L^q
 $ in time $-L^q$ in space maximum regularity result for any $p,q\in(1,\infty)$ as explained in \cite[Theorem~4.22]{DK}.
\end{rem}

\subsection{Spectral properties} \label{SSS}

Since $X_1\hookrightarrow X_0$ is compact, $(\zeta+A_0)^{-1}$ is a compact operator from $X_0$ into itself if it belongs to $\mathcal{L}(X_0,X_1)$.
 By the Riesz-Schauder theory, Theorem~\ref{TRes} implies that all spectrum $\sigma(A_0)$ of $A_0$ (and also $A_{00}$) consists of eigenvalues and its accumulation points lie only at the space infinity.
 Theorem \ref{TSp} says that all eigenvalues of $A_0$ except $0$ have positive real part if $\rho^e$ is energetically stable.
 For $A_{00}$ it is not difficult to prove that $\ker A_{00}=\{0\}$, i.e., $0$ is not an eigenvalue of $A_{00}$, when $\Omega$ is a bounded domain; see Theorem~\ref{TSol}.
 In the case of $\Omega=\mathbb{T}^n$, $\ker A_{00}=\mathbb{C}^n$.
\begin{thm} \label{TSp2}
Let $\Omega$ be a smooth bounded domain or $\Omega=\mathbb{T}^n$.
 Assume that $\Psi\in C^3$ and $\rho^e\in C^3(\bar{\Omega})$ is energetically stable and positive. 
 Then
\begin{enumerate}
\item[(i)] $\sigma(A_0)\setminus\{0\}$ consists of eigenvalues $\{\zeta_j\}_{j=1}^\infty\subset\Sigma_{\pi/2-\delta}$ with no accumulation points in $\Sigma_{\pi/2-\delta}$ where $\delta>0$ is in Theorem~\ref{TRes}; 
\item[(i\hspace{-0.1em}i)] the same assertion holds for $A_{00}$; 
\item[(i\hspace{-0.1em}i\hspace{-0.1em}i)] if $\rho^e$ is non-degenerate and $\Omega$ is a bounded domain, then
\[
    \dim\ker A_0
    = \dim\ker B
    (\rho^e);
\]
\item[(i\hspace{-0.1em}v)] if $\rho^e$ is non-degenerate and there is no constant direction in the case of $\Omega=\mathbb{T}^n$, then
\[
    \dim \ker A_0
    = \dim \ker B(\rho^e).
\]
In general, in the case of $\Omega=\mathbb{T}^n$,
\[
    \dim\ker A_0=k+\dim\ker B(\rho^e),
\]
where $k=\dim\{c\in\mathbb{C}^n\mid c\cdot\nabla\rho^e\equiv0\}$.
\end{enumerate}
\end{thm}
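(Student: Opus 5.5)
By Theorem~\ref{TRes}, for some $\zeta_0$ with $\zeta_0>R$ we have $(\zeta_0+A_0)^{-1}\in\mathcal{L}(X_0,X_1)$. Since $\Omega$ is bounded (or is $\mathbb{T}^n$), Rellich's theorem gives that $X_1=(H^3\cap D(B_0))\times D(L_0)$ embeds compactly into $X_0=H^1_\mathrm{av}\times (L^2)^n$. Hence $(\zeta_0+A_0)^{-1}$ is compact on $X_0$.

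The Riesz--Schauder theory applied to this compact resolvent shows that $\sigma(A_0)$ is a discrete set of eigenvalues of finite algebraic multiplicity, accumulating only at infinity. Theorem~\ref{TRes} also shows that $\Sigma_{\pi/2+\delta}\cap(B_R)^C$ lies in the resolvent set of $-A_0$. In other words, every nonzero eigenvalue $\zeta$ of $A_0$ satisfies either $|\zeta|<R$ or $|\arg\zeta|\le\pi/2-\delta$.

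Theorem~\ref{TSp}, written for $A_0U=\zeta U$ (that is, $\xi=-\zeta$), gives $\operatorname{Re}\zeta\ge0$, with $\operatorname{Re}\zeta=0$ only if $\zeta=0$. So every nonzero eigenvalue has positive real part. There are only finitely many eigenvalues in $\bar B_R$, and each of them has positive real part. After shrinking $\delta$ if necessary, all of them therefore lie in $\Sigma_{\pi/2-\delta}$. The large eigenvalues lie in the sector by the previous step. This proves (i).

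I expect the main obstacle to be exactly this bookkeeping near the origin. There the resolvent estimate gives no angular information, and the finiteness of the eigenvalues in $\bar B_R$ together with strict positivity of the real parts is what allows $\delta$ to be chosen uniformly. One should state that $\delta$ may have to be decreased compared with Theorem~\ref{TRes}.

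For (ii), apply the same argument to $A_{00}$, using Corollary~\ref{CMax} in place of Theorem~\ref{TRes}. We need the analogue of Theorem~\ref{TSp} for $A_{00}$. It is Theorem~\ref{TSp} with $\Psi\equiv0$, so that $B$ becomes $P_1\varepsilon B_0$. This constant choice is energetically stable, since $\int\varepsilon|\nabla\rho|^2\ge0$, and the identity of Lemma~\ref{LKI} carries over verbatim.

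For (iii) and (iv), the claims reduce to Lemma~\ref{LSp0}. In the bounded domain case, \eqref{Eker} shows that $U\mapsto\rho$ is an isomorphism from $\ker A_0$ onto $\ker B(\rho^e)$. In the periodic case, \eqref{EkerP} identifies $\ker A_0$ with the set of pairs $(\rho,c)$, with $\rho\in\ker B(\rho^e)$ and $c$ ranging over the space of constant directions. This space is a direct sum, since the two components are independent, so $\dim\ker A_0=k+\dim\ker B(\rho^e)$. When there is no constant direction, $k=0$.

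In the complexified setting, $c\cdot\nabla\rho^e\equiv0$ with $c\in\mathbb{C}^n$ holds if and only if it holds for $\operatorname{Re}c$ and $\operatorname{Im}c$, because $\rho^e$ is real. Thus the complex dimension $k$ equals the real dimension used in Lemma~\ref{LSp0}. Non-degeneracy is not needed for these dimension counts. It only enters when one identifies this dimension with $m$, as in Lemma~\ref{LSp0}.
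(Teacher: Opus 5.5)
Your proposal is correct and follows the paper's own route: compactness of the resolvent with Riesz--Schauder, Theorem~\ref{TSp} for positivity of the real parts, Theorem~\ref{TRes} and Corollary~\ref{CMax} for the sector, and Lemma~\ref{LSp0} for the kernels. Your observation that $\delta$ may have to be shrunk to absorb the finitely many eigenvalues in $\bar B_R$ is a precision the paper glosses over.

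One justification in (ii) needs adjusting. For $A_{00}$ on $\mathbb{T}^n$ you cannot just invoke Theorem~\ref{TSp} with $\Psi\equiv0$. Its periodic-case proof uses $c\cdot\nabla\rho^e\in\ker B(\rho^e)$, which relies on $\rho^e$ being an equilibrium for the given $\Psi$. That fact is not available once $B$ is replaced by $P_1\varepsilon B_0$. Instead, once $u\equiv c$, integrate $\xi c+\nabla(\varepsilon B_0\rho)=0$ over $\mathbb{T}^n$; this gives $c=0$ for $\xi\neq0$, and hence $\rho=0$. The bounded-domain case is fine as you wrote it, since there only Lemmas~\ref{LKI}, \ref{LSt} and \ref{LLame} are used.
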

\begin{proof}
As noticed before, $\sigma(A_0)$ and $\sigma(A_{00})$ consists of eigenvalues.
 By Theorem~\ref{TSp}, all eigenvalues except zero have positive real part.
 The properties (i) and (i\hspace{-0.1em}i) follow from Theorem~\ref{TRes} and Corollary~\ref{CMax}.
 The properties (i\hspace{-0.1em}i\hspace{-0.1em}i) and (i\hspace{-0.1em}v) follow from Lemma~\ref{LSp0}.
\end{proof}

\subsection{One-dimensional case} \label{SSOne}

We notice that in one-dimensional setting, the sectoriality (Corollary~\ref{CMax}) is easy to prove by an explicit formula.
 In thie subsection, we give an explicit solution formula.
 Note that the sectorialily with angle less than $\pi/2$ implies the maximum regularity when $X_0$ and $X_1$ are Hilbert spaces \cite[Proposition~2.3]{Sa}.
 We consider the resolvent problem
\[
	(\zeta + A_{00})U = G, \quad
	G = (r,f)^T
\]
and give an explicit proof of Corollary~\ref{CMax} when $\Omega$ is a bounded interval.
 The resolvent problem can be written as 
\begin{align}
	&\zeta\rho + \partial_x(\rho_* u) = r
	\quad\text{in}\quad \Omega=(\alpha,\beta), \label{E1R1} \\
	&\zeta u - \frac{\tilde{\mu}}{\rho_*} u_{xx} - \varepsilon \rho_{xxx} = f
	\quad\text{in}\quad \Omega, \label{E1R2} \\
	&u = 0, \quad \rho_x = 0
	\quad\text{on}\quad \partial\Omega=\{\alpha,\beta\}, \label{E1R3}
\end{align}
where $\tilde{\mu}=2\mu+\lambda$.
 We differentiate \eqref{E1R1} and set $w=\rho_x$ to get
\begin{equation} \label {E1RR1}
	\zeta w + \rho_* u_{xx} + 2\rho_{*x} u_x + \rho_{*xx}u = r_x.
\end{equation}
The equation \eqref{E1R2} can be written as
\begin{equation} \label {E1RR2}
	\zeta u - a u_{xx} - \varepsilon w_{xx} = f
\end{equation}
and \eqref{E1R3} is now
\begin{equation} \label {E1RR3}
	u = w = 0
	\quad\text{on}\quad \partial\Omega,
\end{equation}
where $a=\tilde{\mu}/\rho_*>0$.
 The equations \eqref{E1RR1}, \eqref{E1RR2} can be written as
\[
	\zeta \dbinom{u}{w}
	- \left(
	\begin{array}{cc} 
   a & \varepsilon \\ 
   -\rho_* & 0
	\end{array}
	\right) \dbinom{u}{w}_{xx}
	+ \dbinom{0}{2\rho_{*x}u_x+\rho_{*xx}u}
	= \dbinom{f}{r_x}.
\]
Let us consider the eigenvalues of the matrix
\[
	K = \left(
	\begin{array}{cc} 
   a & \varepsilon \\ 
   -\rho_* & 0
	\end{array}
	\right).
\]
The eigen-polynomial is $z^2-az+\varepsilon\rho_*$ so the eigenvalues are
\[
	\lambda_\pm = \left(a \pm \sqrt{a^2 - 4\varepsilon\rho_*}\right) \Bigm/2.
\]
Its real part is estimated as
\[
	2a \ge \operatorname{Re} \lambda_+ \ge \operatorname{Re} \lambda_-
	= \frac{2\varepsilon\rho_*}{a + \sqrt{a^2 - 4\varepsilon\rho_*}}
	\ge 2 \varepsilon \inf_\Omega \frac{\rho_*}{2a} > 0
\]
if $a^2-4\varepsilon\rho_*\ge0$ and otherwise $\operatorname{Re}\lambda_\pm=a/2$.
 In any case, $\operatorname{Re}\lambda_\pm$ is bounded away from zero and from above.
 A standard elliptic theory \cite{E} implies that for any $(f,r_x)^T\in\left(L^2(\Omega)\right)^2$, there exists unique $(u,w)^T\in\left(H^2(\Omega)\right)^2$ provided that $\operatorname{Re}\zeta>0$ is sufficiently large.
 Moreover,
\[
	\left\|\dbinom{u}{w}\right\|_{H^2}
	\le C \left\|\dbinom{f}{r_x}\right\|_{L^2}.
\]
Since $\rho$ is recovered from $w$ by
\[
	\rho(x) = \int_\alpha^x w
	-\frac{1}{\beta-\alpha}\int_\alpha^\beta \left(\int_\alpha^x w\,dy \right)dx
\]
to get $\int_\alpha^\beta\rho\,dx=0$, we conclude that $(\zeta+A_{00})^{-1}\in\mathcal{L}(X_0,X_1)$ for sufficiently large $\zeta$.
 One is able to get resolvent estimates of Corollary~\ref{CMax} from the explicit solution formula.
 Moreover, we see that $0\notin\sigma(A_{00})$.

For periodic case, similar argument yields the desired result.

\subsection{Higher dimensional case} \label{SSH}

We consider the resolvent problem
\[
	(\zeta + A_{00}) U = G, \quad
	G = (r,f)^T
\]
and give a proof that $0\not\in\sigma(A_{00})$.
 We only discuss the case when $\Omega$ is a bounded domain with necessary boundary condition.
 The case $\Omega=\mathbb{T}^n$ is more direct (see Remark~\ref{RPe}).
 We consider the case $\zeta=0$.
 Namely we consider $A_{00}U=G$ or
\begin{align}
	&\operatorname{div}(\rho_* u) = r
	\quad\text{in}\quad \Omega \label{EH1} \\
	&\frac1{\rho_*} L_0 u + \nabla(-\varepsilon\Delta\rho) = f	
	\quad\text{in}\quad \Omega \label{EH2} \\
	&u = 0, \quad \partial\rho/\partial\nu = 0
	\quad\text{on}\quad \partial\Omega. \label{EH3}
\end{align}
We shall prove that $A_{00}$ has a bounded inverse.
\begin{thm} \label{TSol}
Assume that $\mu>0$ and $\lambda+2\mu>0$ and that $\rho_*\in C^3(\bar{\Omega})$ is positive.
 Then $A_{00}$ has a bounded inverse from $X_0$ to $X_1$.
 Moreover
\begin{equation} \label{EKA}
	\|A_{00}^{-1} G\|_{X^1} \le C\|G\|_{X_0}, \quad
	G \in X_0
\end{equation}
with $C$ depending on $\rho_*$ only through its $C^3$ norm and $\inf_\Omega \rho_*$.
\end{thm}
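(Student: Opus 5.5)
We treat $A_{00}U=G$ as a generalized Stokes problem for $u$ coupled with an elliptic Neumann problem for $\rho$. The aim is to reduce \eqref{EH1}--\eqref{EH3} to a Stokes-type system with a weighted divergence constraint. The ``pressure'' is $\pi:=-\varepsilon\Delta\rho=\varepsilon B_0\rho$, which lies in $L^2_{\mathrm{av}}$ automatically. Multiplying \eqref{EH2} by $\rho_*$ gives
\[
L_0u+\rho_*\nabla\pi=\rho_* f,\qquad \operatorname{div}(\rho_*u)=r,\qquad u|_{\partial\Omega}=0 .
\]
For compatibility, $r\in H^1_{\mathrm{av}}$ has zero mean, which matches $\int_\Omega\operatorname{div}(\rho_*u)\,dx=0$. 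Once $(u,\pi)$ is found with $\pi\in H^1\cap L^2_{\mathrm{av}}$, we recover $\rho=\varepsilon^{-1}B_0^{-1}\pi$. Since $\pi\in D(B_0^{1/2})$, the identification $D(B_0^{3/2})=H^3\cap D(B_0)$ recalled in Subsection~\ref{SSFO} gives $\rho\in H^3\cap D(B_0)$ with $\|\rho\|_{H^3}\le C\|\pi\|_{H^1}$.

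\emph{Step 1 (lifting the divergence).} Set $g:=r/\rho_*$ minus a correction so that the datum is admissible. Concretely, use the Bogovski\u{\i} operator to find $v\in H^2\cap H^1_0$ with $\operatorname{div}(\rho_*v)=r$ and $\|v\|_{H^2}\le C\|r\|_{H^1}$. Writing $\operatorname{div}(\rho_*v)=\rho_*\operatorname{div}v+\nabla\rho_*\cdot v$, it is simpler to substitute $w=\rho_*u$. Then $\operatorname{div}w=r$, and $L_0(w/\rho_*)$ equals $\rho_*^{-1}L_0w$ plus first-order terms in $w$ whose coefficients involve up to two derivatives of $\rho_*$. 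Bogovski\u{\i}'s theorem handles $\operatorname{div}w=r$ with mean-zero $r\in H^1_0$; for general $r\in H^1_{\mathrm{av}}$ without boundary vanishing we instead take $w_0=\nabla\phi$ with $\Delta\phi=r$, $\partial_\nu\phi=0$, and correct the tangential trace with a divergence-free field. The net effect is to reduce to $\operatorname{div}w'=0$.

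\emph{Step 2 (weighted Stokes problem).} We solve $L_0u+\rho_*\nabla\pi=h$, $\operatorname{div}(\rho_*u)=0$, $u|_{\partial\Omega}=0$, using the Lax--Milgram/Babu\v{s}ka--Brezzi framework on $V=\{u\in H^1_0:\operatorname{div}(\rho_*u)=0\}$. Coercivity on $V$ follows from Lemma~\ref{LLame}. The inf-sup condition for the pairing $b(u,\pi)=\int_\Omega\pi\operatorname{div}(\rho_*u)\,dx=-\int_\Omega\rho_*u\cdot\nabla\pi\,dx$ follows from the classical one via $w=\rho_*u$. This yields a unique weak solution $u\in H^1_0$, $\pi\in L^2_{\mathrm{av}}$. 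Uniqueness also follows directly from the energy argument: pair with $\bar u$, use $\int\rho_*u\cdot\nabla\pi=-\int\pi\operatorname{div}(\rho_*u)=0$, and conclude $u=0$ from Lemma~\ref{LLame}, hence $\nabla\pi=0$, hence $\pi=0$ by zero average.

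\emph{Step 3 (regularity).} This is the main obstacle: upgrading to $u\in H^2$, $\pi\in H^1$ with the estimate \eqref{EKA}. Here we invoke Sohr's regularity theory for the (generalized) Stokes system, equivalently Agmon--Douglis--Nirenberg estimates for the Douglis--Nirenberg elliptic system $(L_0u+\rho_*\nabla\pi,\operatorname{div}(\rho_*u))$. This system is a lower-order perturbation of the Lam\'e--Stokes system: freezing coefficients, the principal symbol is that of $(L_0+\rho_*\nabla,\rho_*\operatorname{div})$, which is elliptic with Dirichlet boundary conditions satisfying the complementing condition since $\mu>0$, $2\mu+\lambda>0$ and $\rho_*>0$. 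The a priori estimate
\[
\|u\|_{H^2}+\|\pi\|_{H^1}\le C(\|h\|_{L^2}+\|r\|_{H^1}+\|u\|_{L^2}+\|\pi\|_{L^2})
\]
then holds, and the lower-order terms are removed by the uniqueness of Step 2 through a standard compactness (contradiction) argument. Since the constants in the ADN estimates and in the contradiction argument depend on $\rho_*$ only through a modulus of its $C^3$ norm and $\inf\rho_*$, we get the claimed dependence. Finally, combining with the recovery of $\rho$ gives $\|U\|_{X_1}\le C\|G\|_{X_0}$, so $A_{00}^{-1}$ is bounded from $X_0$ to $X_1$.
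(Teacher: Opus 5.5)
Your argument is correct in substance, but it gets existence by a different mechanism than the paper. The paper never sets up a weak formulation. For $\rho_*\equiv1$ it lumps $\varepsilon\Delta\rho$ and $(\lambda+\mu)\operatorname{div}u$ into one pressure, so $A_{00}U=G$ becomes exactly Lemma~\ref{LSS} followed by a Neumann problem for $\rho$. General $\rho_*$ is then reached by the method of continuity along $\rho_s=(1-s)+s\rho_*$. The uniform a priori bound comes from a contradiction argument: $\rho_{*m}$ varies and converges in $C^2$ by Arzel\`{a}--Ascoli, \eqref{EKI} with Lemma~\ref{LLame} kills the limit, and Lemma~\ref{LSS} upgrades to strong convergence in $X_1$.

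You instead take $\pi=\varepsilon B_0\rho$ as the pressure of a weighted Stokes system, get existence in one shot from Babu\v{s}ka--Brezzi, and then appeal to elliptic regularity; the uniqueness mechanism is the same as in the paper. The paper's route needs only one black box (Sohr's lemma) but gives non-explicit constants. Yours is more direct and quantitative.

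Your Step~3 is stated loosely, though. An ADN a priori estimate presupposes $(u,\pi)\in H^2\times H^1$, so you still owe the proof that the weak solution is strong. The simplest fix is the paper's own pressure trick. With $q=\rho_*\pi-(\mu+\lambda)\operatorname{div}u$, your weak solution satisfies
\[
-\mu\Delta u+\nabla q=\rho_*f+\pi\nabla\rho_*\in L^2,\qquad \operatorname{div}u=(r-\nabla\rho_*\cdot u)/\rho_*\in H^1_{\mathrm{av}},\qquad u|_{\partial\Omega}=0 .
\]
Lemma~\ref{LSS} together with uniqueness of weak Stokes solutions then gives $u\in H^2$ and $q\in H^1$, hence $\pi\in H^1$. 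The resulting bound is in terms of $\|\rho_*f\|_{L^2}+\|r\|_{H^1}+\|u\|_{H^1}+\|\pi\|_{L^2}$, and the Brezzi estimate already controls the last two terms. This yields \eqref{EKA} with a constant depending only on $\|\rho_*\|_{C^2}$ and $\inf_\Omega\rho_*$.

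With this, the ADN complementing condition and the compactness argument become unnecessary. So does your Step~1, since Brezzi's theorem accepts the inhomogeneous constraint $\operatorname{div}(\rho_*u)=r$ directly. If you keep the compactness argument instead, uniformity in $\rho_*$ requires letting $\rho_*$ vary along the sequence, as the paper does.
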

The key idea is to apply a classical solvability result for the stationary Stokes problem with non-zero divergence.
 We consider
\begin{align}
	\operatorname{div}u 
	&= r \quad\text{in}\quad \Omega \label{ESS1} \\
	-\Delta u + \nabla\varpi
	&= f \quad\text{in}\quad \Omega \label{ESS2} \\
	u &= 0 \quad\text{on}\quad \partial\Omega. \label{ESS3}
\end{align}
The next result is found for example in a book of H.~Sohr \cite[I\!I\!I. Theorem~1.5.3]{Soh}.
\begin{lem} \label{LSS}
Let $\Omega$ be a smooth ($C^2$) bounded domain in $\mathbb{R}^n$ ($n\ge2$).
 Assume that $r\in H_\mathrm{av}^1(\Omega)$ and $f\in \left(L^2(\Omega)\right)^n$.
 Then, there exists a unique solution $(u,\varpi)\in\left(H^2(\Omega)\right)^n\times L_\mathrm{av}^2(\Omega)$ of \eqref{ESS1}--\eqref{ESS3} satisfying
\begin{equation} \label{EGS}
    \|u\|_{H^2(\Omega)} + \| \nabla\varpi \|_{L^2(\Omega)}
	\le C \left( \|f\|_{L^2(\Omega)} + \|r\|_{H^1(\Omega)} \right)
\end{equation}
with some $C$ independent of $f$ and $r$.
\end{lem}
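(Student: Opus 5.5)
The plan is to reduce the problem to the classical Stokes system with solenoidal velocity, by subtracting a field that carries the prescribed divergence. Two things come for free. Uniqueness follows from an energy argument: if $(u,\varpi)$ solves \eqref{ESS1}--\eqref{ESS3} with $r=0$ and $f=0$, testing \eqref{ESS2} with $\bar u$ and using $\operatorname{div}u=0$, $u|_{\partial\Omega}=0$ gives $\int_\Omega|\nabla u|^2\,dx=0$. Hence $u=0$, so $\nabla\varpi=0$, and $\varpi=0$ because $\varpi\in L^2_\mathrm{av}$. The compatibility condition $\int_\Omega r\,dx=0$, which is necessary since $u=0$ on $\partial\Omega$, is built into the hypothesis $r\in H^1_\mathrm{av}(\Omega)$.

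\textbf{Step 1 (lifting the divergence).} Using $r\in H^1_\mathrm{av}$, I solve the Neumann problem $\Delta\phi=r$ in $\Omega$, $\partial\phi/\partial\nu=0$ on $\partial\Omega$. In the notation of Section~\ref{SSFO} this is $\phi=-B_0^{-1}r$. The characterization of $D(B_0^{3/2})$ recalled there (elliptic regularity) gives $\phi\in H^3(\Omega)$ with $\|\phi\|_{H^3}\le C\|r\|_{H^1}$. I then set $w=\nabla\phi\in (H^2)^n$. This field satisfies $\operatorname{div}w=r$ and $w\cdot\nu=0$ on $\partial\Omega$, but its tangential trace $g:=-w|_{\partial\Omega}$ need not vanish. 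The trace $g$ lies in $H^{3/2}(\partial\Omega)$, is tangential, and therefore satisfies $\int_{\partial\Omega}g\cdot\nu\,d\mathcal H^{n-1}=0$.

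\textbf{Step 2 (solenoidal problem with boundary data).} I look for $u=w+v$, where $v$ solves
\[
\operatorname{div}v=0,\quad -\Delta v+\nabla\varpi=f+\Delta w\ \ \text{in}\ \Omega,\quad v=g\ \ \text{on}\ \partial\Omega .
\]
Here $f+\Delta w\in L^2$ with norm bounded by $C(\|f\|_{L^2}+\|r\|_{H^1})$. This is solved in the usual way. First, lift $g$ to a solenoidal $H^2$ field; this uses the flux condition $\int_{\partial\Omega}g\cdot\nu=0$ together with a Bogovski\u{\i}-type correction. Second, solve the remaining homogeneous-boundary problem weakly by the Riesz/Lax--Milgram theorem in the closure of solenoidal $C_0^\infty$ fields in $H^1_0$. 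Third, recover $\varpi\in L^2_\mathrm{av}$ by de Rham's theorem, using Ne\v{c}as' inequality $\|\varpi\|_{L^2}\le C\|\nabla\varpi\|_{H^{-1}}$ for average-free functions.

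\textbf{Step 3 (regularity; the main obstacle).} The hard step is the $H^2\times H^1$ regularity up to the boundary, that is, Cattabriga's estimate
\[
\|v\|_{H^2}+\|\nabla\varpi\|_{L^2}\le C\bigl(\|f+\Delta w\|_{L^2}+\|g\|_{H^{3/2}(\partial\Omega)}\bigr).
\]
Since $-\Delta$, $\nabla$, $\operatorname{div}$ with Dirichlet data form an elliptic system in the sense of Agmon--Douglis--Nirenberg, I would prove it in two stages. Interior regularity comes from difference quotients. Near the boundary I would flatten $\partial\Omega$ (which is $C^2$), take tangential difference quotients, and then recover the normal derivatives $\partial_\nu^2 v$ and $\partial_\nu\varpi$ from the equations themselves, using $\operatorname{div}v=0$ to express $\partial_\nu v_\nu$ through tangential derivatives. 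This is exactly the content of the cited result of Sohr, and I would ultimately invoke it rather than redo it. Combining with Step 1 yields \eqref{EGS} for $u=w+v$, and the uniqueness argument above shows that this $u$ is the unique solution.
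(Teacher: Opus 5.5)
The paper does not prove this lemma. It quotes Sohr's Theorem~III.1.5.3, which already covers the non-solenoidal problem $\operatorname{div}u=r$, $u|_{\partial\Omega}=0$. Your plan takes a different route: you reduce to the solenoidal Stokes theory and cite only Cattabriga-type regularity. Several of its steps are fine: the energy argument for uniqueness, the weak solution via Riesz and de Rham/Ne\v{c}as, and citing the $H^2$ regularity.

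The gap is in the reduction itself. Step~1 trades the divergence $r$ for a nonzero tangential trace $g=-\nabla\phi|_{\partial\Omega}$. Step~2 then needs a solenoidal $H^2$ lift of $g$ ``via a Bogovski\u{\i}-type correction''. Done the obvious way (extend $g$ to some $z\in(H^2(\Omega))^n$, then correct $\operatorname{div}z$), this is circular. The function $\operatorname{div}z\in H^1(\Omega)$ has mean zero but in general a nonzero trace on $\partial\Omega$. Bogovski\u{\i}'s operator gains the extra derivative only on $H^1_0\cap L^2_\mathrm{av}$, which it maps into $(H^2_0)^n$. So you would need $y\in(H^2)^n$ with $y|_{\partial\Omega}=0$ and $\operatorname{div}y=\operatorname{div}z$ for such data, which is exactly the difficulty Step~1 was meant to remove.

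The missing idea is to use the $H^2$ inverse trace theorem, which lets you prescribe $z|_{\partial\Omega}\in H^{3/2}$ and $\partial_\nu z|_{\partial\Omega}\in H^{1/2}$ independently, to kill the boundary trace of the divergence. Applied directly to $r$, this makes Step~1 unnecessary:
\begin{itemize}
\item Choose $z\in(H^2(\Omega))^n$ with $z|_{\partial\Omega}=0$ and $\partial_\nu z=(r|_{\partial\Omega})\nu$.
\item Since $z$ vanishes on $\partial\Omega$, $\operatorname{div}z=\partial_\nu z\cdot\nu=r$ there, and $\int_\Omega\operatorname{div}z\,dx=0$.
\item Hence $r-\operatorname{div}z\in H^1_0\cap L^2_\mathrm{av}$, and Bogovski\u{\i} gives $y\in(H^2_0)^n$ with $\operatorname{div}y=r-\operatorname{div}z$.
\item Then $w=z+y$ satisfies $w|_{\partial\Omega}=0$, $\operatorname{div}w=r$ and $\|w\|_{H^2}\le C\|r\|_{H^1}$.
\item Finally $u-w$ solves the solenoidal Stokes system with zero Dirichlet data and $L^2$ forcing, where the classical Cattabriga estimate applies.
\end{itemize}
Alternatively, keep your $g$ but lift it only in $H^1$, where the naive correction does work. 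Then rely on the inhomogeneous-data form of Cattabriga's estimate that you wrote in Step~3.

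Two smaller points. First, the $H^3$ regularity of the Neumann problem in Step~1 needs $\partial\Omega\in C^{2,1}$, more than the $C^2$ assumed in the lemma. This is harmless for the paper's smooth domains, and the lift above avoids it. Second, the result of Sohr you propose to invoke is, as the paper uses it, the non-solenoidal statement itself. Citing it would make Steps~1--2 redundant; your reduction should cite only the solenoidal regularity theorem.
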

To give the main idea to solve \eqref{EH1}--\eqref{EH3}, we first consider the case $\rho_*\equiv1$.
 The equation becomes
\begin{align}
	&\operatorname{div}u = r \quad\text{in}\quad \Omega \label{EHS1} \\
	&-\Delta u + \nabla\varpi = f/\mu \quad\text{in}\quad \Omega \label{EHS2} \\
	&\varpi = -\left( \varepsilon \Delta \rho + ( \lambda+\mu ) \operatorname{div}u \right)
	/\mu \quad\text{in}\quad \Omega \label{EHS3} \\
	&u = 0, \quad \partial\rho/\partial\nu = 0 \quad\text{on}\quad \partial\Omega. \label{EHS4}
\end{align}
For a given $r\in H_\mathrm{av}^1(\Omega)$ and $f\in \left(L^2(\Omega)\right)^n$, we apply Lemma \ref{LSS} to find solution $(u,\varpi)\in(H^2)^n\times H^1$ of \eqref{EHS1}, \eqref{EHS2} with $u=0$ on $\partial\Omega$.
 The relation \eqref{EHS3} implies that
\[
	-\varepsilon \Delta \rho = h
\]
with
\[
	\| h \|_{H^1} \le C \left( \|u\|_{H^2} + \| \varpi \|_{H^1} \right).
\]
Since $\int_\Omega r\,dx=0$, so is $h$.
 This Poisson equation with $\partial\rho/\partial\nu=0$ of \eqref{EHS4} is uniquely solvable and by elliptic regularity theory, we obtain $\|\rho\|_{H^3}\le C\|h\|_{H^1}$; see \cite{E}.
 We thus found a desired $(\rho,u)^T\in X_1$.
 By \eqref{EGS},
\[
	\| u \|_{H^2} + \| \varpi \|_{H^1} 
	\le C \left( \| f \|_{L^2} + \| r \|_{H^1} \right),
\]
so $\|\rho\|_{H^3}$ is also estimated by $C\left(\|f\|_{L^2}+\|r\|_{H^1}\right)$.
 Note that this argument works for $L^q(\Omega)$ space with trivial modification.
 We also note that the same argument works for the Dirichlet condition $\rho=0$ on $\partial\Omega$ by considering $H^1$ space instead of $H_\mathrm{av}^1$ space.
 In the case $\rho_*\equiv1$ (or equivalently $\rho_*$ is a non-zero constant), to get \eqref{EKA} and Theorem \ref{TSol}, we only need that $\mu\neq0$.
 No assumption on $\lambda$ is necessary. 
\begin{proof}[Proof of Theorem \ref{TSol}]
We use a homotopy argument.
 For $s\in(0,1)$ we set
\[
	\rho_s = (1-s) + s\rho_*.
\]
The corresponding $A_{00}$ (with $\rho_*=\rho_s$) is denoted by $A_{00s}$.
 By the previous argument, $A_{000}$ has a bounded inverse $A_{000}^{-1}$ from $X_0$ to $X_1$.
 Since $\rho_*\in C^2(\bar{\Omega})$, by a lower order perturbation  $A_{00s}^{-1}$ is still bounded from $X_0$ to $X_1$ for small $s>0$.

To prove Theorem \ref{TSol} it now suffices to prove an apriori estimate for solutions of \eqref{EH1}, \eqref{EH2}, \eqref{EH3}.
Namely,
\[
	\| u \|_{H^2} + \| \rho \|_{H^3}
	\le C \left( \| r \|_{H^1} + \| f \|_{L^2} \right)  
\]
with $C$ depends on $\rho_*$ only through its $C^3$ norm.
 Suppose that it were false.
 Then, there would exist a sequence $\left\{(\rho_m,u_m)^T\right\}\subset X_1$ and $\left\{(r_m,f_m)^T\right\}\subset X_0$, $\{\rho_{*m}\}\subset C^3$ with
\[
	\inf_m \inf_\Omega \rho_{*m} > 0
\]
such that it solves \eqref{EH1}, \eqref{EH2}, \eqref{EH3} with $\rho_*=\rho_{*m}$ and that 
\[
	\| u_m \|_{H^2} + \| \rho_m \|_{H^3}
	> m \left( \| f_m \|_{L^2} + \| r_m \|_{H^1} \right).  
\]
Since the problem is linear, by dividing $u_m$ and $\rho_m$ by $\|u_m\|_{H^2}+\|\rho_m\|_{H^3}$, we may assume that
\[
	\| u_m \|_{H^2} + \| \rho_m \|_{H^3} = 1, \quad
	\| f_m \|_{L^2} + \| r_m \|_{H^1} < 1/m.  
\]
Since $C^3$ norm of $\rho_{*m}$ is bounded, by Arzel\`{a}-Ascoli's compactness theorem, we may assume that $\rho_{*m}\to\rho_{**}$ in $C^2(\bar{\Omega})$ by taking a subsequence.
 By Rellich's compactness, $\{u_m\}$ and $\{\nabla\rho_m\}$ have a strong convergent subsequence in $H^1$.
 Moreover $\nabla u_m$, $\nabla^2\rho_m$ converges weakly in $L^2$ by Banach-Alaoglu's theorem.
 We still denote $\{u_m\}$ and $\{\rho_m\}$ for such a subsequence.
 Let $u_\infty$ and $\rho_\infty$ be the limit of $\{u_m\}$ and $\{\rho_m\}$.
 We notice that $u_m$ and $\rho_m$ solve
\[
	\frac1{\rho_{*m}} L_0 u_m + \nabla(-\varepsilon \Delta \rho_m) = f_m.
\]
Sending $m\to\infty$ yields
\[
	\frac1{\rho_{**}} L_0 u_\infty + \nabla(-\varepsilon \Delta \rho_\infty) = 0
\]
since $1/\rho_{*m}\to1/\rho_{**}$ in $C^2(\bar{\Omega})$.
 It is much easier to obtain
\[
	\operatorname{div}(\rho_{**}u_\infty) =  0.
\]
By \eqref{EKI}, $\int_\Omega \bar{u}_\infty\cdot L_0 u_\infty\,dx=0$.
 If $\lambda+2\mu>0$, then by Lemma \ref{LLame} we see that $u_\infty\equiv0$.
 This implies $\Delta\rho_\infty$ is a constant.
 However, by the boundary condition $\partial\rho/\partial\nu=0$, this constant $\Delta\rho_\infty$ must be zero.
 Thus, again by the homogeneous Neumann condition $\rho_\infty$ itself is a constant.
 Since $\rho_\infty\in H_\mathrm{av}^1$, this implies that $\rho_\infty=0$.

Since
\[
	L_0 u_m + \rho_{*m} \nabla(-\varepsilon\Delta\rho_m)
	= L_0 u_m + \nabla\varpi_m - (\nabla\rho_{*m})( -\varepsilon\Delta\rho_m)
	\to 0 \quad\text{in}\quad L^2
\]
and $u_m\to0$ in $H^1$, $\rho_m\to0$ in $H^2$, $\rho_{*m}\to\rho_{**}$ in $C^1$, we conclude that
\[
	L_0 u_m + \nabla \varpi_m \to 0 \quad\text{in}\quad L^2
\]
with $\varpi_m=\rho_{*m}(-\varepsilon\Delta\rho_m)$.
 Since
\[
	\operatorname{div}(\rho_{*m}u_m) 
	= \nabla\rho_{*m} \cdot u_m + \rho_{*m}\operatorname{div}u_m \to 0 \quad\text{in}\quad H^1,
\]
and $\rho_{*m}\to\rho_{**}$ in $C^2$ and $u_m\to0$ in $H^1$, $\nabla^2 u_m \rightharpoonup 0$ in $L^2$, 
we conclude that
\[
	\operatorname{div} u_m \to 0
	\quad\text{in}\quad H^1.
\]
Applying Lemma \ref{LSS}, especially \eqref{EGS}, we conclude that
\[
	\| u_m \|_{H^2} + \| \rho_m \|_{H^3} \to 0.
\]
This would contradict our assumption $\|u_m\|_{H^2}+\|\rho_m\|_{H^3} = 1$.
 We thus conclude \eqref{EKA}.
 The proof is now complete.
\end{proof}
\begin{rem} \label{RFS}
We note that the solvability result holds in $L^q$ space \cite{C61}.
 We note that the Stokes resolvent estimate \cite{G81} in $L^q(\Omega)$ is also extended to the case of non-divergence-free case by R.~Farwig and H.~Sohr \cite{FS}, where $\Omega$ is not necessarily a bounded domain.
 It applies to some class of unbounded domains but for such a case the unique solvability for $\zeta=0$ may fail.
 We recall one of their main results on a generalized resolvent equation for the Stokes equations:
\begin{align}
	\operatorname{div}u 
	&= r \quad\text{in}\quad \Omega \label{ES1} \\
	\zeta u - \Delta u + \nabla\varpi
	&= f \quad\text{in}\quad \Omega \label{ES2} \\
	u &= 0 \quad\text{on}\quad \partial\Omega. \label{ES3}
\end{align}
Let us state their main result \cite[Theorem~1.2]{FS} in the case when $\Omega$ is a bounded domain.
\end{rem}
\begin{lem}[\cite{FS}] \label{LFS}
Let $\Omega$ be a bounded $C^2$ domain in $\mathbb{R}^n$ ($n\ge2$).
 Assume that $q\in(1,\infty)$.
 Assume that $r\in W^{1,q}(\Omega)$ and $f\in\left(L^q(\Omega)\right)^n$ with $\int_\Omega r\,dx=0$.
 Assume that $\zeta\in\Sigma_\theta$ with $\theta\in(\pi/2,\pi)$.
 Then there exists a unique solution $(u,\varpi)\in\left(W^{2,q}(\Omega)\right)^n\times L_\mathrm{av}^q(\Omega)$ of \eqref{ES1}, \eqref{ES2} and \eqref{ES3} satisfying
\[
	|\zeta| \|u\|_{L^q} + \| \nabla^2 u \|_{L^q} + \| \nabla\varpi \|_{L^q}
	\le C_\theta \left( \|f\|_{L^q} + \|r\|_{W^{1,q}} + \|\zeta r\|_{W^{-1,q}} \right)
\]
with $C_\theta$ independent of $f$ and $r$.
 The unique solvability is still valid for $\zeta=0$ and $(u,\nabla\varpi)$ satisfies
\[
	\|u\|_{W^{2,q}} + \| \varpi \|_{W^{1,q}}
	\le C \left( \|f\|_{L^q} + \|r\|_{W^{1,q}} \right)
\]
with $C$ independent of $f$ and $r$.
\end{lem}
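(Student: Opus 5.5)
We plan to prove Lemma~\ref{LFS} by reducing the non-divergence-free resolvent problem \eqref{ES1}--\eqref{ES3} to the classical divergence-free Stokes resolvent problem, using a Bogovski\u{\i}-type right inverse of the divergence. The key step is to construct, for $r\in W^{1,q}(\Omega)$ with $\int_\Omega r\,dx=0$, a vector field $w=\mathcal{B}r$ with $\operatorname{div}w=r$ and $w=0$ on $\partial\Omega$. The Bogovski\u{\i} operator $\mathcal{B}$ must be bounded simultaneously in three ways:
\[
\mathcal{B}: W^{1,q}_0\text{-type data}\to W^{2,q}_0, \quad \mathcal{B}: L^q_{\mathrm{av}}\to W^{1,q}_0, \quad \mathcal{B}: W^{-1,q}\to L^q .
\]
The last bound holds for $r$ in the dual space of $W^{1,q'}$ with zero mean. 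This third bound is what produces the term $\|\zeta r\|_{W^{-1,q}}$: writing $u=w+v$, we get
\[
\zeta v-\Delta v+\nabla\varpi=f-\zeta w+\Delta w,\qquad \operatorname{div}v=0,\qquad v|_{\partial\Omega}=0,
\]
and we need $\|\zeta w\|_{L^q}\le C\|\zeta r\|_{W^{-1,q}}$ and $\|\Delta w\|_{L^q}\le C\|r\|_{W^{1,q}}$.

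A genuine difficulty is that $r\in W^{1,q}$ need not vanish on $\partial\Omega$, so $\mathcal{B}r$ need not lie in $W^{2,q}\cap W^{1,q}_0$. We would first handle this by a lifting. We solve the Neumann problem $\Delta\phi=r$ with $\partial_\nu\phi=0$ on $\partial\Omega$, using the compatibility $\int_\Omega r\,dx=0$, and set $w_1=\nabla\phi\in W^{2,q}$. Then $\operatorname{div}w_1=r$ and $\nu\cdot w_1=0$, and by duality $\|w_1\|_{L^q}\le C\|r\|_{W^{-1,q}}$. The tangential trace of $w_1$ is corrected by a field $w_2$ which is divergence-free, vanishes with $w_1+w_2$ on the boundary, and is estimated by the same norms. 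This can be done, for instance, via a localized correction with a Bogovski\u{\i} operator applied to compactly supported data.

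Once $w$ is constructed, we apply the known $L^q$ Stokes resolvent estimate (Giga \cite{G81}) to $v$ for $\zeta\in\Sigma_\theta$, $|\zeta|$ large:
\[
|\zeta|\|v\|_{L^q}+\|\nabla^2v\|_{L^q}+\|\nabla\varpi\|_{L^q}\le C\|f-\zeta w+\Delta w\|_{L^q}.
\]
Summing this with the bounds on $w$ gives the stated estimate. For bounded $\zeta$, including $\zeta=0$, we use that the Stokes operator on a bounded domain is invertible with spectrum in $(0,\infty)$, so the resolvent is bounded on $\Sigma_\theta\cup\{0\}$ away from large $|\zeta|$. For $\zeta=0$ the term $\zeta r$ disappears and Lemma~\ref{LSS} (its $L^q$ version \cite{C61}) gives the second estimate. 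Uniqueness follows from uniqueness for the divergence-free Stokes resolvent problem.

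The main obstacle will be the lifting step: achieving the $W^{-1,q}\to L^q$ bound together with the $W^{2,q}$ bound and zero Dirichlet data for $r$ not vanishing on $\partial\Omega$. If the gradient-plus-correction construction becomes delicate, the fallback is to follow \cite{FS}: localize, flatten the boundary, and treat the half-space problem by explicit Fourier multipliers, then absorb perturbation terms for large $|\zeta|$.
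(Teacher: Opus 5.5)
\paragraph{Verdict: genuine gap in the lifting step.}
The paper does not prove Lemma~\ref{LFS}; it only quotes \cite[Theorem~1.2]{FS}. Your reduction to the solenoidal Stokes resolvent is therefore an independent route. Its outer layer is fine: the splitting $u=w+v$, the norm bookkeeping, invertibility of the Stokes operator for bounded $\zeta$, uniqueness, and the case $\zeta=0$. The gap is the one non-routine step. You need a field $w\in W^{2,q}\cap W^{1,q}_0$ with $\operatorname{div}w=r$, $\|w\|_{W^{2,q}}\lesssim\|r\|_{W^{1,q}}$ \emph{and} $\|w\|_{L^q}\lesssim\|r\|_{W^{-1,q}}$. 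This field is never constructed, and the mechanism you indicate for $w_2$ cannot supply it.

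\paragraph{Why the indicated construction of $w_2$ fails.}
\begin{itemize}
\item A Bogovski\u{\i} field has zero trace, so it cannot carry the nonzero trace $-\nabla_\tau\phi|_{\partial\Omega}$.
\item Suppose you first extend that trace by some tangential $\tilde w_2$ and then subtract $\mathcal{B}(\operatorname{div}\tilde w_2)$. To use the $W^{1,q}_0\to W^{2,q}_0$ bound of $\mathcal{B}$ you need $\operatorname{div}\tilde w_2=0$ on $\partial\Omega$. This forces $\tilde w_2$ to have a normal component whose normal derivative on $\partial\Omega$ equals $\Delta_{\partial\Omega}(\phi|_{\partial\Omega})$, up to sign.
\item The obvious local choice, the distance function $d$ times cut-off second derivatives of $\phi$, costs $\|d\,\nabla^2\phi\|_{L^q}$ in the collar. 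This is not controlled by $\|r\|_{W^{-1,q}}$. Take $r$ oscillating at frequency $k$ in the collar: then $\|r\|_{W^{-1,q}}=O(k^{-1})$, while $\|d\,\nabla^2\phi\|_{L^q}\gtrsim\|d\,r\|_{L^q}\sim 1$.
\end{itemize}
So the bound $|\zeta|\|w\|_{L^q}\lesssim\|\zeta r\|_{W^{-1,q}}$, which is exactly what produces the new term in the lemma, is asserted rather than proved. A minor additional point: $W^{3,q}$ regularity of the Neumann problem normally needs a $C^{2,1}$ boundary, more than the stated $C^2$.

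\paragraph{First repair: a nonlocal lifting.}
Normalize $\phi$ to mean zero. Then $\|\phi|_{\partial\Omega}\|_{B^{1-1/q}_{q,q}(\partial\Omega)}\lesssim\|\nabla\phi\|_{L^q}\lesssim\|r\|_{W^{-1,q}}$, so $g=-\nabla_\tau(\phi|_{\partial\Omega})$ lies in $B^{-1/q}_{q,q}$ with the right bound. A Poisson-type solenoidal extension is bounded $B^{-1/q}_{q,q}\to L^q$ and $B^{2-1/q}_{q,q}\to W^{2,q}$. In the half space, with Fourier transform in $x'$, take $\hat w'=\hat g\,(1-|\xi'|x_n)e^{-|\xi'|x_n}$ and $\hat w_n=-i\xi'\cdot\hat g\,x_ne^{-|\xi'|x_n}$. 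After localization the divergence error vanishes on $\partial\Omega$, so $\mathcal{B}$ can absorb it.

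\paragraph{Second repair, closer to your idea: do not ask $w$ to lie in $W^{2,q}$.}
\begin{itemize}
\item Existence follows from any $W^{2,q}$ lifting, for example the $\zeta=0$ solution.
\item For the estimate, take simply $w=\mathcal{B}r$, so that $\|w\|_{L^q}\lesssim\|r\|_{W^{-1,q}}$ and $\|\nabla w\|_{L^q}\lesssim\|r\|_{L^q}$. Then $v=u-w\in L^q_\sigma$.
\item Let $A_{q'}$ and $P_{q'}$ be the Stokes operator and Helmholtz projection in $L^{q'}$, and set $\Phi=(\zeta+A_{q'})^{-1}P_{q'}\psi$. Then
$\int_\Omega v\cdot\psi\,dx=\int_\Omega (f-\zeta w)\cdot\Phi\,dx-\int_\Omega\nabla w:\nabla\Phi\,dx$.
\item For $|\zeta|\ge1$ use $\|\Phi\|_{L^{q'}}\lesssim|\zeta|^{-1}\|\psi\|_{L^{q'}}$ and $\|\nabla\Phi\|_{L^{q'}}\lesssim|\zeta|^{-1/2}\|\psi\|_{L^{q'}}$. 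Combine these with $|\zeta|^{1/2}\|r\|_{L^q}\lesssim|\zeta|\|r\|_{W^{-1,q}}+\|r\|_{W^{1,q}}$, which follows from $\|r\|_{L^q}^2\lesssim\|r\|_{W^{-1,q}}\|r\|_{W^{1,q}}$. This bounds $|\zeta|\|u\|_{L^q}$.
\item Finally, apply the $\zeta=0$ estimate with right-hand side $f-\zeta u$ to obtain the bound on $\|\nabla^2u\|_{L^q}+\|\nabla\varpi\|_{L^q}$.
\end{itemize}
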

The idea to use the solvability result with non-zero divergence was first applied for compressible flow equation by Y.~Enomoto and Y.~Shibata \cite{ES} to derive $L^p$ maximal regularity result for a linearized compressible Navier-Stokes equations.
 In the meanwhile the resolvent estimate for a linearized compressible Navier-Stokes equations (including temperature) in $L^p$ is obtained by G.~Str\"{o}hmer \cite{Str} by use of general elliptic $L^p$ theory together with a parameter trick by S.~Agmon.
 Recently, the $L^p$ maximal regularity is proved by \cite{AH} using their theory of diagonal dominated block operators.
 Different from our $A_{00}$, the linearized operator corresponding to compressible Navier-Stokes equations is a diagonal dominant operator in the sense of \cite{AH}.
\begin{rem} \label{RPe}
We consider the operator $A_{000}$ for the case $\Omega=\mathbb{T}^n$, where $A_{000}$ is defined in the beginning of the proof of Theorem~\ref{TSol}, namely
\[
	A_{000}U=\dbinom{\operatorname{div}u}{\nabla(\varepsilon B_0\rho)+L_0 u}
	\quad\text{for}\quad
	U=\dbinom{\rho}{u}.
\]
Since $A_{000}$ has a non-trivial kernel, argument above is not applicable.
 However, it is easy to see that $\ker A_{00}=\ker A_{000}=\mathbb{C}^n$ because $(\rho,u)\in\ker A_{00}$ or $(\rho,u)\in\ker A_{000}$ implies that $u$ is a constant by \eqref{EKI0} and Lemma~\ref{LLame}.
\end{rem}
\begin{proof}[Proof of Theorem~\ref{TRes} admitting Corollary~\ref{CMax}]
We set $R=A_0-A_{00}$ and observe
\[
	(\zeta + A_0)^{-1} = (\zeta + A_{00})^{-1}
	\left( I + R(\zeta+A_{00})^{-1} \right)^{-1}.
\]
Thus if we are able to prove that, the operator norm
\begin{equation} \label{ERi}
	\left\| R(\zeta+A_{00})^{-1} \right\|_{X_0\to X_0} \le 1,
\end{equation}
then by the Neumann series argument
\[
	\left( I + R(\zeta+A_{00})^{-1} \right)^{-1}
\]
is a well-defined operator in $\mathcal{L}(X_0,X_0)$.
 Thus $(\zeta+A_0)^{-1}$ is in $\mathcal{L}(X_0,X_1)$ if $(\zeta+ A_{00})^{-1}\in\mathcal{L}(X_0,X_1)$.

We shall prove \eqref{ERi} for sufficiently large $\zeta$.
 We know $R$ is a lower order term so that
\[
		\| RU \|_{X_0} \le \delta \| U \|_{X_1}
		+ C_\delta \| U \|_{X_0}
\]
for any $\delta>0$ with some constant $C_\delta>0$.
 For $A_{00}$, we know its sectoriality (Corollary \ref{CMax})
so that
\begin{gather*}
		\left\| (\zeta+A_{00})^{-1} U \right\|_{X_0} \le \frac{C_1}{\zeta} \| U \|_{X_0} \\
		\left\| A_{00}(\zeta+A_{00})^{-1} U \right\|_{X_0} \le C_2 \| U \|_{X_0}
\end{gather*}
for $\zeta>0$.
 Since $\|U\|_{X_1}\le C_3\|A_{00}U\|_{X_0}$, we now observe that
\[
	\left\| R(\zeta + A_{00})^{-1} U \right\|_{X_0} 
	\le \delta C_2 C_3 \| U \|_{X_0}
	+ \frac{C_\delta}{\zeta} C_1 \| U \|_{X_0}. 
\]
Take $\delta$ small so that $\delta C_2 C_3<1/2$ and take $\zeta$ large so that $C_\delta C_1/\zeta<1/2$.
 We have proved \eqref{ERi}.
\end{proof}

\section{Application of stability principle} \label{SA} 

We recall a generalized stability principle \cite{PWS} in the form \cite[Theorem~5.3.1]{PS}.
 Let us write their abstract setting.
 Let $X_0$ and $X_1$ be two complex Banach spaces such that $X_1$ is densely embedded in $X_0$.
 Let $(X_0,X_1)_{\theta,q}$ denote its real interpolation space \cite{L2}.
 We set
\[
	X_\gamma = (X_0,X_1)_{1-1/p,p}
\]
for a given $p\in(1,\infty)$.
 Let $V$ be an open set of $X_\gamma$.
 We consider
\[
	(\mathcal{A},\mathcal{F}): V 
	\to \mathcal{L}(X_1,X_0) \times X_0
\]
and a quasilinear evolution equation
\begin{equation} \label{EAbs}
	\frac{dv}{dt} + \mathcal{A}(v)v
	= \mathcal{F}(v), \quad
	v(0) = v_0 \in V.
\end{equation}
Let $v_*$ denote a \emph{equilibrium} of \eqref{EAbs}, i.e.,
\[
	\mathcal{A}(v_*) v_*
	= \mathcal{F} (v_*), \quad
	v_* \in V \cap X_1.
\]

For $\mathcal{A}$ at $v_*$ we assume
\begin{enumerate}
\item[(A1)] $\mathcal{A}(v_*)$ has the property of maximal $L^p$-regularity.
 In other words, a linear equation
\[
	\frac{dw}{dt} + \mathcal{A}(v_*)w
	= f \quad\text{in}\quad (0,T)
	\quad\text{with}\quad w(0) = w_0 \in X_\gamma
\]
is uniquely solvable for $f\in L^p(0,T,X_0)$ and
\[
	\int_0^T \left( \left\| \frac{dw}{dt} \right\|_{X_0}^p
	+ \| w \|_{X_1}^p \right) dt
	\le C_T \left( \int_0^T \| f \|_{X_0}^p\, dt 
	+ \| w_0 \|_{X_\gamma}^p \right)
\]
for any $T>0$ with some constant $C_T$ independent of $f$ and $w_0$.
\end{enumerate}
This makes the problem a parabolic problem.
 The term involving $\mathcal{F}$ is considered as a lower order term.
  When we study a large time behavior, we have to study a spectral property of the linearized operator of $\mathcal{A}(v)v-\mathcal{F}(v)$ at $v=v_*$.
 This operator is denoted by $\mathcal{A}_0$ and its explicit form is
\[
	\mathcal{A}_0 w =
	\mathcal{A} (v_*)w + \left( A'(v_*)w \right) v_*
	- \mathcal{F}'(v_*)w \quad\text{for}\quad w \in X_1.
\]
To define $\mathcal{A}_0$, we assume $C^1$-dependence of $\mathcal{A}$ and $\mathcal{F}$.
\begin{enumerate}
\item[(A2)] The mapping $(\mathcal{A},\mathcal{F}):V\to\mathcal{L}(X_1,X_0)\times X_0$ is $C^1$.
\end{enumerate}

We say that an equilibrium $v_*$ is (\emph{linearly}) \emph{normally stable} if the following four properties hold.
 Let $\mathcal{E}$ be the set of all equilibria in $V\cap X_1$.
\begin{enumerate}
\item[(S1)] Near $v_*$, the set $\mathcal{E}$ is $C^1$-manifold in $X_1$ of dimension $m\in\mathbb{N}$,
\item[(S2)] the tangent space for $\mathcal{E}$ at $v_*$ is isomorphic to $\ker\mathcal{A}_0$,
\item[(S3)] $0$ is a semi-simple eigenvalue of $\mathcal{A}_0$ in the sense that $\ker\mathcal{A}_0\oplus\operatorname{im}\mathcal{A}_0=X_0$,
 where $\ker\mathcal{A}_0$ denotes the nulty or the kernel of $\mathcal{A}_0$ and $\operatorname{im}\mathcal{A}_0$ denotes the range or the image of $\mathcal{A}_0$.
 (If we assume that $A_0$ is a Fredholm operator of index zero, i.e.,
\[
	\dim(\ker\mathcal{A}_0) - \dim(\operatorname{coker}\mathcal{A}_0) = 0, \quad
	\operatorname{coker}\mathcal{A}_0 = X_0/\operatorname{im}\mathcal{A}_0,
\] 
we only assume $\ker\mathcal{A}_0\cap\operatorname{im}\mathcal{A}_0=\{0\}$ since this implies $\ker\mathcal{A}_0\oplus \operatorname{im}\mathcal{A}_0=X_0$.
 In particular, if $X_0$ has a finite dimension, semi-simplicity is equivalent to $\ker\mathcal{A}_0\cap\operatorname{im}\mathcal{A}_0=\{0\}$.)
\item[(S4)] Let $\sigma(\mathcal{A}_0)$ denote the set of all spectra of $\mathcal{A}_0$.
 Then $\sigma(\mathcal{A}_0)\backslash\{0\}\subset\left\{z\in\mathbb{C}\mid \operatorname{Re} z>\omega\right\}$ with some $\omega>0$.
\end{enumerate}
In the case $m=0$, i.e., $v_*$ is an isolated equilibrium we say that $v_*$ is \emph{linearly stable} if $\ker\mathcal{A}_0=\{0\}$ and (S4) holds.

The next theorem is taken from \cite[Theorem~5.3.1]{PS} with slightly different wording.
\begin{thm} \label{TGSP}
Assume that $(\mathcal{A},\mathcal{F})$ satisfies (A2).
 Let $v_*$ be an equilibrium of \eqref{EAbs}.
 Assume (A1) for $\mathcal{A}(v_*)$.
 If $v_*$ is linearly normally stable ((S1), (S2), (S3), (S4)) then, $v_*$ is nonlinearly stable in the sense that there is $\delta>0$ such that if $\|v_0-v_*\|_{X_\gamma}<\delta$, then there exists a unique global-in-time solution $v$ of \eqref{EAbs} with $v(0)=v_0$, and moreover $v(t)$ converges to some $v_{**}\in\mathcal{E}$ as $t\to\infty$ in $X_\gamma$ at exponential rate.
 ($v_{**}$ may not be the same as $v_*$).
\end{thm}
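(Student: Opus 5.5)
Since the statement is \cite[Theorem~5.3.1]{PS} in slightly different wording, I would reproduce the Pr\"{u}ss--Simonett--Wilke argument: decompose the space along the semi-simple eigenvalue $0$, parametrize the equilibria near $v_*$ as a graph over $\ker\mathcal{A}_0$, and run a weighted maximal regularity argument for the normal component.

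\textbf{Step 1: normalization and local well-posedness.} After translating, I take $v_*=0$ and write \eqref{EAbs} as
\[
	\frac{dv}{dt}+\mathcal{A}_0 v=G(v),\qquad G(v):=\bigl(\mathcal{A}(0)-\mathcal{A}(v)\bigr)v+\mathcal{F}(v)-\mathcal{F}'(0)v .
\]
By (A2), $\|G(v_1)-G(v_2)\|_{X_0}\le \eta(r)\|v_1-v_2\|_{X_1}$ whenever $\|v_i\|_{X_\gamma}\le r$ and $v_i\in X_1$, with $\eta(r)\to0$ as $r\to0$. By (A1) and perturbation, $\mathcal{A}_0$ also has maximal $L^p$-regularity, since $\mathcal{A}_0-\mathcal{A}(0)$ is of lower order. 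The Cl\'ement--Li local existence theorem then gives a unique local solution in $E_1(0,T)=H^{1,p}(0,T;X_0)\cap L^p(0,T;X_1)$. This solution continues as long as it stays small in $X_\gamma$, using the embedding $E_1(0,T)\hookrightarrow C([0,T];X_\gamma)$.

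\textbf{Step 2: decomposition and normal form.} By (S3), I let $P$ be the spectral projection onto $X_0^c:=\ker\mathcal{A}_0$ along $X_0^s:=\operatorname{im}\mathcal{A}_0$. Then the part $\mathcal{A}_s$ of $\mathcal{A}_0$ in $X_0^s$ has spectrum in $\{\operatorname{Re}z>\omega\}$ by (S4), so $\mathcal{A}_s-\omega'$ with $\omega'<\omega$ has maximal regularity on the half line $\mathbb{R}_+$. I then use (S1)--(S2) and the implicit function theorem to write $\mathcal{E}$ near $0$ as a graph: $\{x+\phi(x)\mid x\in X_0^c,\ |x|<\rho_0\}$ with $\phi\in C^1$, $\phi(0)=\phi'(0)=0$. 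Here I use that $X_0^c$ is finite dimensional, so all norms on it are equivalent. I introduce the normal coordinates
\[
	x=Pv,\qquad y=(I-P)v-\phi(x).
\]
Subtracting the equilibrium equation satisfied by $x+\phi(x)$, the system becomes
\[
	\dot x=T(x,y),\qquad \dot y+\mathcal{A}_s y=R(x,y),
\]
with $T(x,0)=R(x,0)=0$. The key bound to establish is
\[
	|T(x,y)|+\|R(x,y)\|_{X_0}\le \eta(r)\|y\|_{X_1}
\]
for small $x$ and $y$. This holds because every nonlinear term vanishes at $y=0$, and because $G$ is $C^1$ with $G'(0)=0$.

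\textbf{Step 3: exponential decay and convergence.} I would work in the weighted space $e^{-\omega' t}E_1(\mathbb{R}_+)$. Half-line maximal regularity gives
\[
	\|e^{\omega' t}y\|_{E_1}\le C\bigl(\|y_0\|_{X_\gamma}+\eta(r)\|e^{\omega' t}y\|_{E_1}\bigr),
\]
which is absorbed for small $r$. From the $x$-equation, $|\dot x|\le \eta\|y\|_{X_1}$ is integrable with exponential weight, so $x(t)\to x_\infty$ exponentially and $|x(t)|\le C\|v_0\|_{X_\gamma}$ stays small. A continuation (bootstrap) argument on the maximal existence interval shows that the solution never leaves the small ball, so it is global. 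Then $v(t)\to v_{**}:=x_\infty+\phi(x_\infty)\in\mathcal{E}$ exponentially in $X_\gamma$, because $y\to0$ exponentially in $X_\gamma$ by the trace embedding.

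The main obstacle is Step 2, specifically obtaining the estimate $\|R(x,y)\|_{X_0}\le\eta\|y\|_{X_1}$ uniformly. This requires handling the quasilinear term $(\mathcal{A}(v)-\mathcal{A}(0))v$, which is controlled only through $\|v\|_{X_\gamma}\|v\|_{X_1}$. It also requires checking that the graph map $\phi$ is $C^1$ into $X_1$, which (S1)--(S2) provide. I would first try to get the estimate by writing $R(x,y)=G(x+\phi(x)+y)-G(x+\phi(x))$ plus the corresponding $\phi'$ correction, and then applying the Lipschitz bound for $G$ from Step 1.
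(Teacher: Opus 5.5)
Your proposal reproduces the proof of the generalized principle of linearized stability from \cite{PWS} and \cite[Theorem~5.3.1]{PS}. The paper does not prove this theorem itself; it only cites it. Your ingredients are exactly those of the cited proof: the projection $P$ onto $\ker\mathcal{A}_0$ along $\operatorname{im}\mathcal{A}_0$, the graph parametrization $x\mapsto x+\phi(x)$ of $\mathcal{E}$, the normal form $\dot x=T(x,y)$, $\dot y+\mathcal{A}_s y=R(x,y)$ with $T(x,0)=R(x,0)=0$, exponentially weighted half-line maximal regularity for $\mathcal{A}_s-\omega'$, and a continuation argument.

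One correction: the Lipschitz bound you state in Step~1 is false as written. In
\[
G(v_1)-G(v_2)=(\mathcal{A}(0)-\mathcal{A}(v_1))(v_1-v_2)+(\mathcal{A}(v_2)-\mathcal{A}(v_1))v_2+\cdots,
\]
the middle term is only bounded by $C\|v_1-v_2\|_{X_\gamma}\|v_2\|_{X_1}$. The factor $\|v_2\|_{X_1}$ need not be small when you only know $\|v_2\|_{X_\gamma}\le r$. What is true is the asymmetric estimate
\[
\|G(\bar v+y)-G(\bar v)\|_{X_0}\le\eta(r)\|y\|_{X_1}\quad\text{for}\quad \|\bar v\|_{X_1}\le r,\ \|y\|_{X_\gamma}\le r,\ y\in X_1,
\]
with $\eta(r)\to0$ as $r\to0$. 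This is all Step~2 needs. There the base point $\bar v=x+\phi(x)$ is an equilibrium lying in $X_1$, with $\|x+\phi(x)\|_{X_1}\le C|x|$. This is exactly why $\phi$ must be $C^1$ into $X_1$ and not merely into $X_\gamma$; note also that $\ker\mathcal{A}_0\subset X_1$ is finite-dimensional. The term $\phi'(x)T(x,y)$ is then harmless. With this replacement, your bound $|T(x,y)|+\|R(x,y)\|_{X_0}\le\eta\|y\|_{X_1}$ holds and the rest of your argument goes through unchanged.
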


If one applies the proof for the case that $\mathcal{E}=\{v_*\}$, we get usual stability principle.
\begin{cor} \label{CSP}
Assume that $(\mathcal{A},\mathcal{F})$ satisfies (A2).
 Let $v_*$ be an isolated equilibrium of \eqref{EAbs}.
 Assume that (A1) for $\mathcal{A}(v_*)$.
 If $v_*$ is linearly stable, then $v_*$ is nonlinearly stable in the sense that there is $\delta>0$ such that if $\|v_0-v_*\|_{X_\gamma}<\delta$, there exists a unique globally-in-time solution $v$ of \eqref{EAbs} with $v(0)=v_0$ and moreover $\|v(t)-v_*\|_{X_\gamma}\le ce^{-\omega t}$ as $t\to\infty$.
\end{cor}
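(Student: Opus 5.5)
The plan is to specialize the proof of Theorem~\ref{TGSP} to the case $\mathcal{E}=\{v_*\}$, where the centre-manifold and equilibrium-parametrization part of the argument disappears. Shift coordinates by setting $w=v-v_*$. Then \eqref{EAbs} becomes
\[
\frac{dw}{dt}+\mathcal{A}_0 w = G(w), \qquad G(w):=\mathcal{A}_0w-\mathcal{A}(v_*+w)(v_*+w)+\mathcal{F}(v_*+w).
\]
By (A2) and $\mathcal{A}(v_*)v_*=\mathcal{F}(v_*)$ we have $G(0)=0$. For $w_1,w_2$ in a small $X_\gamma$-ball intersected with $X_1$, we also have the Lipschitz bound
\[
\|G(w_1)-G(w_2)\|_{X_0}\le \eta(r)\|w_1-w_2\|_{X_1}+C\|w_1-w_2\|_{X_\gamma},
\]
with $\eta(r)\to0$ as the radius $r\to0$. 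Indeed, the quasilinear part contributes $(\mathcal{A}(v_*)-\mathcal{A}(v_*+w))w$, which is small times $\|w\|_{X_1}$. The semilinear remainder is $o(\|w\|_{X_\gamma})$ by $C^1$-ness.

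The linear part comes first. Maximal regularity (A1) for $\mathcal{A}(v_*)$ passes to $\mathcal{A}_0$, because $\mathcal{A}_0-\mathcal{A}(v_*)=(\mathcal{A}'(v_*)\cdot)v_*-\mathcal{F}'(v_*)\in\mathcal{L}(X_\gamma,X_0)$ is a lower-order perturbation. Next, linear stability gives $\ker\mathcal{A}_0=\{0\}$, and together with (S4) this yields $\sigma(\mathcal{A}_0)\subset\{\operatorname{Re}z>\omega\}$. Choosing $0<\omega'<\omega$, the operator $\mathcal{A}_0-\omega'$ still has maximal regularity and spectrum in the open right half-plane. By the standard result (e.g.\ \cite{PS}), maximal regularity then holds on the half-line $\mathbb{R}_+$ in the exponentially weighted space $e^{-\omega'}L^p(\mathbb{R}_+;X_0)$: the solution $w$ of $w'+\mathcal{A}_0w=f$, $w(0)=w_0$, satisfies
\[
\|e^{\omega' t}w\|_{\mathbb{E}}\le M\left(\|e^{\omega' t}f\|_{L^p(\mathbb{R}_+;X_0)}+\|w_0\|_{X_\gamma}\right),
\]
where $\mathbb{E}=H^{1,p}(\mathbb{R}_+;X_0)\cap L^p(\mathbb{R}_+;X_1)$, with the trace embedding $\mathbb{E}\hookrightarrow BUC(\mathbb{R}_+;X_\gamma)$. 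I expect this to be the main obstacle: passing from finite-interval maximal regularity plus the spectral gap to uniform half-line estimates. The route is to use the sectoriality that follows from (A1), together with the exponential stability of the analytic semigroup generated by $-(\mathcal{A}_0-\omega')$ (spectral bound negative), and then combine the finite-$T$ estimate with the semigroup decay.

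The nonlinear problem is then a fixed point. On the ball $\{w:\|e^{\omega' t}w\|_{\mathbb{E}}\le r\}$, define $\Phi(w)$ to be the solution of the linear problem with $f=G(w)$ and initial value $w_0=v_0-v_*$. Since $\|w(t)\|_{X_\gamma}\le C r$ for all $t$, the Lipschitz estimate applies pointwise in time. After multiplying by $e^{\omega't}$ it gives
\[
\|e^{\omega't}(G(w_1)-G(w_2))\|_{L^p(X_0)}\le (\eta(Cr)+C'\varepsilon(r))\,\|e^{\omega't}(w_1-w_2)\|_{\mathbb{E}},
\]
where the $X_\gamma$-part is handled by writing $G=(\mathcal{A}_0w-\text{linear part})$, which is superlinear in $X_\gamma$. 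Choosing $r$ small and then $\delta\ll r$ makes $\Phi$ a contraction. Its fixed point is the global solution, and uniqueness follows from local uniqueness of maximal-regularity solutions. Finally, $\|v(t)-v_*\|_{X_\gamma}\le Ce^{-\omega' t}\|e^{\omega't}w\|_{\mathbb{E}}\le ce^{-\omega' t}$.
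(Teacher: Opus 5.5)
Your route is the one the paper intends. The paper's only justification is to run the proof of Theorem~\ref{TGSP} with $\mathcal{E}=\{v_*\}$, and for $m=0$ that proof reduces to the standard argument you give: maximal regularity of $\mathcal{A}_0$ on $\mathbb{R}_+$ in an exponentially weighted class, plus smallness of the nonlinearity. There are, however, two places where your argument does not hold as written.

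\textbf{The spectral step is a genuine gap.} From $\ker\mathcal{A}_0=\{0\}$ and (S4) you \emph{cannot} conclude $\sigma(\mathcal{A}_0)\subset\{\operatorname{Re}z>\omega\}$. Condition (S4) only makes $0$ an isolated point of the spectrum, and an injective operator can still have $0$ in its spectrum. For example, take $X_0=X_1=L^2(0,1)$, $\mathcal{A}\equiv V$ and $\mathcal{F}\equiv0$, where $(Vf)(x)=\int_0^x f\,dy$ is the Volterra operator. Then $V$ is injective and quasinilpotent, $v_*=0$ is the only equilibrium, and (A1), (A2), $\ker\mathcal{A}_0=\{0\}$ and (vacuously) (S4) all hold. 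Yet $\sigma(e^{-tV})=\{1\}$, so $\|e^{-tV}\|\ge1$ for every $t$, and by uniform boundedness the orbits of small data cannot all decay exponentially. What you need in addition is $\operatorname{im}\mathcal{A}_0=X_0$, so that the closed operator $\mathcal{A}_0$ is invertible. This is exactly (S3) for $m=0$. A faithful specialization of Theorem~\ref{TGSP} carries it, but the paper's definition of ``linearly stable'' drops it. Nothing is lost in the paper's application: $X_1\hookrightarrow X_0$ is compact, so $A_0$ has compact resolvent, its spectrum consists of eigenvalues, and injectivity does give $0\in\rho(A_0)$. In the abstract corollary, though, you must assume invertibility (or such a compactness/Fredholm property) explicitly.

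\textbf{The pointwise Lipschitz bound is false.} You state it with a constant $C$ uniform on an $X_\gamma$-ball, and as you yourself note, the $X_\gamma$-constant has to be small for a contraction anyway. The difference of the quasilinear parts contains the cross term $[\mathcal{A}(v_*+w_2)-\mathcal{A}(v_*+w_1)]w_2$. This term is bounded only by $L\|w_1-w_2\|_{X_\gamma}\|w_2\|_{X_1}$, and $\|w_2\|_{X_1}$ is unbounded on an $X_\gamma$-ball. It is neither of the form $\eta(r)\|w_1-w_2\|_{X_1}$ nor superlinear in $X_\gamma$. Your superlinearity remark covers $\mathcal{F}(v_*+w)-\mathcal{F}(v_*)-\mathcal{F}'(v_*)w$ and $(\mathcal{A}(v_*+w)-\mathcal{A}(v_*)-\mathcal{A}'(v_*)w)v_*$, but not this term. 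It can only be controlled after integrating in time, using the trace embedding $\mathbb{E}\hookrightarrow BUC(\mathbb{R}_+;X_\gamma)$:
\[
\bigl\|e^{\omega' t}[\mathcal{A}(v_*+w_2)-\mathcal{A}(v_*+w_1)]w_2\bigr\|_{L^p(\mathbb{R}_+;X_0)}
\le L\sup_{t\ge0}\|(w_1-w_2)(t)\|_{X_\gamma}\,\|e^{\omega't}w_2\|_{L^p(\mathbb{R}_+;X_1)}
\le CLr\,\|e^{\omega't}(w_1-w_2)\|_{\mathbb{E}}.
\]
With these two repairs, your final contraction estimate holds, with the extra $CLr$ absorbed into the small constant. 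The rest of your argument then closes as you describe: the perturbation of (A1) to $\mathcal{A}_0$, the half-line regularity for $\mathcal{A}_0-\omega'$, and the choice of $r$ small and $\delta\ll r$.
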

For stability of isolated stationary solution, the assertion that linear stability implies nonlinear stability can be proved even for fully nonlinear equations by analytic semigroup theory (without appealing maximum regularity theory) in a different framework; see e.g.\ \cite[Chapter~9]{L1}.

We shall apply Theorem \ref{TGSP} and Corollary \ref{CSP} to get Theorem \ref{TM2} and Theorem \ref{TM1}.

We take function spaces $X_0$ and $X_1$ as in Section \ref{SSFO}, which are Hilbert spaces.
 We set $p=2$ and write $X_\gamma$ as $X_{1/2}$.
 We set operators $\mathcal{A}(U)=A(U)$, $\mathcal{F}(U)=F(U)$ defined in Section \ref{SSFO}.
 By Proposition \ref{PBa}, (A2) is fulfilled.
 Since we know the $L^2$-maximum regularity for $A_{00}$ in Lemma~\ref{LMax}, (A1) for $A_{00}$ is proved.
 If $v_*$ is isolated, it is shown that $\ker A_0=\{0\}$ in Lemma \ref{LSp0}.
 The spectral analysis for $A_0$ also yields (S4).
 Thus, Theorem \ref{TM1} is proved.
 To show Theorem \ref{TM2}, we have to further check semi-simplicity of $0$ eigenvalue.
%
\begin{thm} \label{TSSi}
Assume that $\mu>0$ and $2\mu+\lambda>0$.
 If $0$ is an eigenvalue of $A_0$, then it is semi-simple.
\end{thm}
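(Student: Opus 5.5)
Since $(\zeta+A_0)^{-1}$ is compact for large $\zeta$ (Theorem~\ref{TRes} together with the compact embedding $X_1\hookrightarrow X_0$), $A_0$ is Fredholm of index zero. By the remark after (S3), it therefore suffices to show $\ker A_0\cap\operatorname{im}A_0=\{0\}$. Equivalently, I will show that there is no generalized eigenvector of order two: if $A_0U=V$ with $V\in\ker A_0$, then $V=0$.

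The main tool is the identity of Lemma~\ref{LKI} with $\xi=0$ and right-hand side $G=V=(r,f)^T$:
\[
\int_\Omega u\cdot L_0\bar u\,dx=\int_\Omega \bigl(rB(\rho^e)\bar\rho+\rho^e\bar f\cdot u\bigr)\,dx .
\]
Here I will use the description of $\ker A_0$ from Lemma~\ref{LSp0}: $V=(r,c)$ with $r\in\ker B(\rho^e)$ and $c$ a constant vector satisfying $c\cdot\nabla\rho^e\equiv0$; in a bounded domain $c=0$.

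First, because $B(\rho^e)$ is symmetric on $L^2_{\mathrm{av}}$ and $B(\rho^e)r=0$ (with $r$ real up to taking real and imaginary parts, or using conjugates consistently), the term $\int_\Omega rB(\rho^e)\bar\rho\,dx$ equals $\int_\Omega (B(\rho^e)r)\bar\rho\,dx=0$. I need to check that $r$ lies in the domain; it does, since $r$ is the $\rho$-component of an element of $X_1$.

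Second, the remaining term is $\int_\Omega\rho^e\bar c\cdot u\,dx$, which must be shown to vanish. Write $\rho^e\bar c\cdot u=\bar c\cdot(\rho^e u)$. On $\mathbb{T}^n$, the first equation of $A_0U=V$ reads $\operatorname{div}(\rho^e u)=r$. This alone does not give $\int\rho^e u=0$. Instead I will use $\bar c\cdot(\rho^e u)=\operatorname{div}\bigl((\bar c\cdot x)\rho^e u\bigr)-(\bar c\cdot x)\,r$ locally, but this is not periodic, so I will avoid it. A cleaner route is to pair the second equation of $A_0U=V$ with $\bar c\rho^e$. Since $L_0$ annihilates constants and is symmetric, $\int_\Omega L_0u\cdot\bar c\,dx=0$. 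Also $\int_\Omega\rho^e\bar c\cdot\nabla(B(\rho^e)\rho)\,dx=-\int_\Omega(\bar c\cdot\nabla\rho^e)B(\rho^e)\rho\,dx=0$. Hence $\int_\Omega\rho^e|c|^2\,dx=0$, so $c=0$ directly. In a bounded domain $c=0$ from the start.

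With $c=0$ and the $r$-term vanishing, the identity gives $\int u\cdot L_0\bar u=0$, so $u$ is constant by Lemma~\ref{LLame}; in a bounded domain $u=0$. Then $r=\operatorname{div}(\rho^e u)=u\cdot\nabla\rho^e$. If $u=0$, then $r=0$. On the torus, $r=u\cdot\nabla\rho^e$ with $u$ constant. Pairing $r$ against itself, I write $\int|r|^2=\int r\,\overline{u\cdot\nabla\rho^e}$, which gives nothing directly. Instead, the second equation gives $\nabla(B(\rho^e)\rho)=0$, so $\rho\in\ker B(\rho^e)$. Then $\int (B(\rho^e)\bar r)\rho=0$ is automatic, and I fall back on Fredholm symmetry: take $r$ real part separately and note $\int_\Omega r\,\bar r=\int_\Omega(u\cdot\nabla\rho^e)\bar r=-\int\rho^e u\cdot\nabla\bar r$. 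This equals zero if I can show $\int\rho^e\nabla r=0$, which is the delicate point.

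The main obstacle is this torus case where $u\equiv c'$ is a nonzero constant not in a constant direction. Here I would use that $r\in\ker B(\rho^e)$ and $\rho^e$ solves $\Psi_\rho(\rho^e)-\varepsilon\Delta\rho^e=a$, giving $\int\rho^e\nabla r=-\int r\nabla\rho^e=-\int r\,\nabla\rho^e$. I expect $\int_\Omega r\,\partial_j\rho^e\,dx$ to be handled by the symmetry of $B(\rho^e)$ against $\partial_j\rho^e\in\ker B(\rho^e)$. If that fails, the fallback is to reduce to the no-constant-direction setting via the moving frame of Remark~\ref{RMain}.
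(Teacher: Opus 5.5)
On a bounded domain your argument is the paper's. Both use Fredholm index zero from the compact resolvent, then Lemma~\ref{LKI} at $\xi=0$ with right-hand side $(r,0)^T$, $r\in\ker B(\rho^e)$. Symmetry of $B(\rho^e)$ kills the $r$-term, so $\int_\Omega u\cdot L_0\bar u\,dx=0$, $u=0$ by the Dirichlet condition, and $r=\operatorname{div}(\rho^e u)=0$. The paper's proof uses exactly that boundary condition and does not treat $\mathbb{T}^n$.

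On $\mathbb{T}^n$ the obstacle you flag is not a missing trick but a counterexample, so neither your hoped-for identity nor the moving-frame fallback can close it.

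\textbf{Why the torus case fails.} Your step giving $c=0$ (pairing the second equation with $\rho^e\bar c$) is correct. The remaining case $u\equiv c'$ genuinely occurs:
\begin{itemize}
\item For any constant $c'\in\mathbb{C}^n$ you have $U=(0,c')^T\in X_1$ and $A_0U=(c'\cdot\nabla\rho^e,0)^T$.
\item Since $c'\cdot\nabla\rho^e\in\ker B(\rho^e)$ (differentiate $\Psi_\rho(\rho^e)-\varepsilon\Delta\rho^e=\mathrm{const}$ in the direction $c'$), also $A_0^2U=0$.
\item Hence $A_0U$ is a nonzero element of $\ker A_0\cap\operatorname{im}A_0$ whenever $c'$ is not a constant direction of $\rho^e$.
\end{itemize}
This Jordan chain is the linearization of the Galilean boost $\rho(x,t)=\rho^e(x-c't)$, $u\equiv c'$.

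In your notation, with $r=c'\cdot\nabla\rho^e$, the quantity you need to vanish is $\int_\Omega r\nabla\rho^e\,dx=Mc'$, where $M=\int_\Omega\nabla\rho^e\otimes\nabla\rho^e\,dx$. This matrix is positive definite exactly when $\rho^e$ has no constant direction. Symmetry of $B(\rho^e)$ against $\partial_j\rho^e$ only yields $0=0$.

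So the no-constant-direction setting is the worst case, not a reduction. For non-constant $\rho^e$ on $\mathbb{T}^n$, $0$ is not a semi-simple eigenvalue of $A_0$ on $X_0$ as defined.

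\textbf{What you can prove.} You should state the result for bounded domains only, where your proof is complete. On the torus you would have to change the phase space, for instance restrict to $\{\int_\Omega\rho^e u\,dx=0\}$.
\begin{itemize}
\item This subspace contains $\operatorname{im}A_0$, because $\int_\Omega L_0u\,dx=0$ and $\int_\Omega\rho^e\nabla(B(\rho^e)\rho)\,dx=0$ (each $\partial_j\rho^e$ is average-free and lies in $\ker B(\rho^e)$).
\item On it your argument finishes: $u\equiv c'$ together with $\int_\Omega\rho^e c'\,dx=0$ forces $c'=0$, hence $r=0$.
\end{itemize}
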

\begin{proof}
We first prove that $\ker A_0\cap\operatorname{im}A_0=\{0\}$.
 Namely,
\[
	A_0(A_0 U) = 0 \quad\text{implies}\quad
	A_0 U = 0, \quad U \in X_0.
\]
Since $A_0(A_0 U) = 0$ implies that $A_0U\in\ker A_0$, by Lemma \ref{LSp0}, this implies that
\[
	A_0 U = \left\{ \dbinom{\rho_0}{0} \biggm|
	B(\rho^e)\rho = 0 \right\}.
\]
In other words,
\begin{align*}
	&\operatorname{div}(\rho^e u) = \rho_0 \\
	& \nabla B \rho + \frac{1}{\rho^e} L_0 u = 0 \\
	& B \rho_0 = 0
\end{align*}
where $B=B(\rho^e)$.
 By \eqref{EKI} with $\xi=0$, we observe that
\begin{equation*}
	\int_\Omega \bar{u} L_0 u\, dx
	= \int_\Omega \rho_0 (B\bar{\rho})\, dx
	= \int_\Omega (B\rho_0) \bar{\rho}\, dx = 0,
\end{equation*}
which yields $\nabla u=0$ by Lemma \ref{LLame}.
 This implies that $u=0$ by the boundary condition $u=0$.
 Since $\rho_0-\operatorname{div} (\rho^e\bar{u})=0$, we now obtain $\rho_0=0$.
 We now conclude that $A_0U=0$.

We next prove that the dimension of the coker of $A_0$ equals the dimension of the kernel of $A_0$, i.e.,
\[
	\dim(\operatorname{coker}A_0) = \dim(\ker A_0), \quad
	\operatorname{coker} A_0 = X_0/\operatorname{im}A_0.
\]
This is equivalent to saying that $A_0$ is a Fredholm operator of index zero.
 Since $(\zeta+A_0)^{-1}$ is compact if it exists, the Fredholm index for $\zeta+A_0$ is independent of $\zeta\in\mathbb{C}$.
 Thus the index of $A_0$ must be zero.
 Thus, the above equality holds.
 We also give an explicit proof for the reader's convenience.
 We consider
\begin{align*}
	&\operatorname{div}(\rho_* u) = r \quad\text{in}\quad \Omega \\
	&\frac1{\rho_*} L_0 u + \nabla\varpi = f \quad\text{in}\quad \Omega \\
	&u = 0 \quad\text{on}\quad \partial\Omega.
\end{align*}
We apply Lemma \ref{LFS} to solve the system.
 We argue in the same way to solve \eqref{EH1}, \eqref{EH2} with \eqref{EH3} as in  the proof of Theorem \ref{TSol} to get $(u,\varpi)\in H^2\times H^1$.
 For $B=B(\rho_*)$ since $B$ is self-adjoint, it is easy to see that $\ker B\cap\operatorname{im}B=\{0\}$.
 Moreover, $\operatorname{coker}B$ and $\ker B$ have the same dimension.
 Thus
\[
	\varpi = B \rho_1 + \rho_0, \quad
	\rho_0 \in \ker B, \quad
	\rho_1 \in D(B)
\]
by a suitable choice of $\rho_0$ and $\rho_1$.
 Thus
\begin{align*}
	&\operatorname{div}(\rho_* u) = r \\
	&\frac1{\rho_*} L_0 u + \nabla B\rho_1 = f - \nabla\rho_0.
\end{align*}
We notice that
\[
	\dim \left\{ \nabla \rho_0 \mid
	\rho_0 \in \ker B \right\}
	=\dim \ker B.
\]
Thus $\dim(\operatorname{coker}A_0)=\dim(\ker A_0)$.

Since we know that $\ker A_0\cap\operatorname{im}A_0=\{0\}$, this implies that
\[
	X_0 = \ker A_0 \oplus \operatorname{im}A_0.
\]
In other words, $0$ is semi-simple eigenvalue of $A_0$.
\end{proof}

\section*{Acknowledgements}
This work was done as a part of research activities of Social Cooperation Program ``Mathematical Science for Refrigerant Thermal Fluids" sponsored by Daikin Industries, Ltd.\  at the University of Tokyo. The authors are grateful to members of the Technology Innovation Center of Daikin Industries, Ltd.\ for showing several interesting phenomena related to phase transition with fruitful discussion which triggered this work.
 The work of the first author was partly supported by the Japan Society for the Promotion of Science (JSPS) through the grants KAKENHI: JP24K00531, JP24H00183 and by Arithmer Inc., Daikin Industries, Ltd.\ and Ebara Corporation through collaborative grants.
 The work of the third author was partly supported by JSPS through the grant JP22K13946.

\bigskip

\noindent
(Y.~Giga)
{\it Email address}: \href{mailto:labgiga@ms.u-tokyo.ac.jp}{\nolinkurl{labgiga@ms.u-tokyo.ac.jp}}\\
(N.~Kajiwara)
{\it Email address}: \href{mailto:kajiwara.naoto.p4@f.gifu-u.ac.jp}{\nolinkurl{kajiwara.naoto.p4@f.gifu-u.ac.jp}}\\
(K.~Tsuda)
{\it Email address}: \href{mailto:k-tsuda@ip.kyusan-u.ac.jp}{\nolinkurl{k-tsuda@ip.kyusan-u.ac.jp}}

\end{document}